\newcommand{\R}{\ensuremath{\mathbb{R}}}
\newcommand{\T}{\ensuremath{\mathbb{T}}}
\newcommand{\Z}{\ensuremath{\mathbb{Z}}}
\newcommand{\MM}{\mathcal{M}}
\newcommand{\ba}{\begin{align*}}
\newcommand{\ea}{\end{align*}}
\newcommand{\na}{\nabla}
\newcommand{\lc}{\left(}
\newcommand{\rc}{\right)}
\newcommand{\ep}{\epsilon}
\newcommand{\Rm}{\ensuremath{\mathrm{Rm}}}
\newcommand{\Rc}{\ensuremath{\mathrm{Rc}}}
\newcommand{\mmu}{\ensuremath{\boldsymbol{\mu}}}
\newcommand{\so}{\ensuremath{\mathrm{SO}(n)}}
\newcommand{\spin}{\ensuremath{\mathrm{Spin}(n)}}
\newcommand*\owedge{\mathpalette\@owedge\relax}
\newcommand*\@owedge[1]{%
\mathbin{%
\ooalign{%
$#1\m@th\bigcirc$\cr
\hidewidth$#1\m@th\wedge$\hidewidth\cr
}%
}%
}
\def\ExtendSymbol#1#2#3#4#5{\ext@arrow 0099{\arrowfill@#1#2#3}{#4}{#5}}
\def\ExtendSymbol#1#2#3#4#5{\ext@arrow 0099{\arrowfill@#1#2#3}{#4}{#5}}
\newcommand\longright[2][]{\ExtendSymbol{-}{-}{\rightarrow}{#1}{#2}}
\def\XXint#1#2#3{{\setbox0=\hbox{$#1{#2#3}{\int}$ }
\vcenter{\hbox{$#2#3$ }}\kern-.55\wd0}}
\numberwithin{equation}{section}
\newtheorem{thm}{Theorem}[section]
\newtheorem{cor}[thm]{Corollary}
\newtheorem{prop}[thm]{Proposition}
\newtheorem{lem}[thm]{Lemma}
\newtheorem{rem}[thm]{Remark}
\newtheorem{defn}[thm]{Definition}
\title{Non-collapsing of Ricci shrinkers with bounded curvature}
\author{Conghan Dong \quad and \quad Yu Li}
\date{\today}
\begin{document}
\maketitle

\begin{abstract}
We establish a uniform entropy bound for simply connected Ricci shrinkers with a finite second homotopy group and a uniform curvature bound. Additionally, we extend the non-collapsing result to a broader class of smooth metric measure spaces satisfying Bakry-\'Emery conditions.
\end{abstract}

\section{Introduction}
A Ricci shrinker $(M^n, g, f)$ is a complete $n$-dimensional Riemannian manifold $(M^n,g)$ equipped with a smooth potential function $f: M \to \mathbb R$, satisfying the equation
\begin{equation} \label{E100}
\Rc+\na^2 f=\frac{1}{2}g,
\end{equation}
where $f$ is normalized so that
\begin{align} \label{E101}
R+|\nabla f|^2&=f.
\end{align}

As the critical metrics of Perelman's $\mmu$-functional, Ricci shrinkers play a central role in the study of singularity formation in the Ricci flow. In dimensions 2 and 3, all Ricci shrinkers have been fully classified; see \cite{Ha95} \cite{Pe02} \cite{Naber} \cite{NW} \cite{CCZ}, among others. The complete list consists of $\R^2,S^2,\R^3,S^3,S^2 \times \R$, and their quotients. Recently, all K\"ahler Ricci shrinkers on complex surfaces have also been classified (cf. \cite{CDS24} \cite{CCD24} \cite{BCCD24} and \cite{LW23}). 

To understand the uniform behavior of Ricci shrinkers, we consider their moduli space $\MM_n$, which is the space of all $n$-dimensional Ricci shrinkers equipped with the pointed-Gromov-Hausdorff distance. Investigating the compactification of $\MM_n$ is a problem of fundamental importance. In \cite{LLW21} and \cite{HLW21} (see also \cite{HM15} for the case $n=4$), it was shown that, under a uniform lower bound on entropy, a sequence of Ricci shrinkers converges to a singular Ricci shrinker with mild singularities. This demonstrates a weak compactness of $\MM_n$ under a uniform entropy bound.

The entropy $\mmu$ of a Ricci shrinker, see Definition \ref{dfn:201}, coincides with Perelman's celebrated entropy functional $\mmu(g,1)$ (see \cite{CN09} and \cite[Proposition $3$]{LW20}). The entropy encodes crucial information about Ricci shrinkers. For instance, it is shown in \cite[Theorem 1]{LW20} that $\mmu$ is closely related to the optimal Sobolev constant and the non-collapsing property of Ricci shrinkers. Moreover, $\mmu$ is always nonpositive and has a definite gap from zero for nontrivial Ricci shrinkers (see \cite{LW20} \cite{Yo09} and \cite{Yo12}). Additionally, $\mmu$ relates to the volume of the unit ball around the base point (see Lemma \ref{L202}). In essence, a uniform lower bound on entropy is equivalent to a uniform non-collapsing condition.

This paper focuses on another aspect: investigating the behavior of a sequence of Ricci shrinkers in $\MM_n$ with entropy tending to $-\infty$. Such sequences are collapsing, implying that the limit metric space must have a lower dimension. Understanding the properties of these limit spaces is critical for two main reasons: (1) the limit space reveals information about the boundary of $\MM_n$, crucial for deducing global properties of the moduli space (see, e.g., \cite{Hu20}); (2) analyzing the limit space can help rule out collapsing phenomena through contradiction arguments, provided the limit space is well-understood.

An example of this approach is the work of Petrunin, Rong, and Tuschmann \cite{PRT99} on a conjecture of Klingenberg and Sakai. In \cite{PRT99}, it was shown that a sequence of Riemannian manifolds with uniformly positively pinched sectional curvature and a stable topology (see \cite[Definition 0.2]{PRT99}) cannot collapse. Later, Fang and Rong \cite{FR99}, as well as Petrunin and Tuschmann \cite{PT99}, replaced the stability condition with a topological assumption: the manifolds are simply connected with finite second homotopy groups. The key idea in \cite{PRT99} was to construct a noncompact Alexandrov space with curvature bounded below by a positive constant using collapsing theory and a gluing argument. This led to a contradiction, as such Alexandrov spaces must be compact. Roughly speaking, this means that some form of positive curvature would prevent collapsing from occurring. Our main theorems (Theorem \ref{thm:001} and \ref{thm:002}) will share a similar spirit with respect to the Bakry-\'Emery Ricci curvature. See also \cite{Don23} for results related to positive scalar curvature.

Our main result is a non-collapsing theorem for Ricci shrinkers, assuming a uniform curvature bound and certain topological restrictions.

\begin{thm} \label{thm:001}
Let $(M^n, g, f)$ be a Ricci shrinker with $|\Rm| \le A$, where $M$ is simply connected and has a finite second homotopy group. Then, there exists a constant $C=C(n, A)$ such that 
\begin{align*} 
\mmu(g) \ge -C.
\end{align*}
\end{thm}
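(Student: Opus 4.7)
\textbf{Proof proposal for Theorem \ref{thm:001}.} I would argue by contradiction through collapsing with bounded curvature, in the spirit of \cite{PRT99, FR99, PT99}. Suppose the conclusion fails and take a sequence of Ricci shrinkers $(M_i^n, g_i, f_i)$ satisfying the hypotheses with $\mmu(g_i)\to -\infty$. By the entropy--volume correspondence (Lemma \ref{L202}), this is equivalent to collapsing at the base points $p_i$, which I take to be minima of $f_i$; in particular $\mathrm{Vol}(B(p_i,1))\to 0$.

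Under the bound $|\Rm_i|\le A$, I would invoke Cheeger--Fukaya--Gromov collapsing theory: after passing to a subsequence, $(M_i, g_i, p_i)$ converges in the pointed Gromov--Hausdorff sense to a lower-dimensional limit $(X, p_\infty)$, and the local universal covers converge smoothly to a non-collapsed Riemannian manifold $(\tilde Y, \tilde h, \tilde p_\infty)$ of dimension $n$ on which a nilpotent Lie group $G$ acts isometrically with $\tilde Y/G = X$. The shrinker identity $R + |\nabla f|^2 = f$ combined with $|\Rm_i|\le A$ yields locally uniform $C^k$ control on $f_i$, so the lifts $\tilde f_i$ subconverge to a smooth $G$-invariant $\tilde f$ on $\tilde Y$ satisfying $\Rc_{\tilde h} + \nabla^2 \tilde f = \tfrac12 \tilde h$. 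Hence $(\tilde Y, \tilde h, \tilde f)$ inherits a Ricci shrinker structure with non-trivial continuous symmetry, and its quotient is a smooth metric measure space satisfying $\Rc_f \ge \tfrac12$ in the synthetic Bakry--\'Emery sense.

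Now I would invoke the topology. The simple connectedness and finiteness of $\pi_2(M_i)$, following the refinements of \cite{PRT99} in \cite{FR99, PT99}, rules out twisted/non-abelian collapsing directions and forces $G$ to contain a positive-dimensional torus $T^k$ acting on $\tilde Y$ and preserving $\tilde f$. A gluing/unfolding argument modeled on the Alexandrov-space construction in \cite{PRT99}, but adapted to the weighted category, then produces a complete \emph{noncompact} smooth metric measure space with Bakry--\'Emery Ricci curvature $\ge \tfrac12$ and effective dimension strictly less than $n$. This is incompatible with the Bakry--\'Emery analogue of the PRT compactness statement (the content of Theorem \ref{thm:002} alluded to in the abstract), yielding the desired contradiction and hence the uniform lower bound $\mmu(g)\ge -C(n,A)$.

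The principal obstacles I anticipate are two-fold. First, propagating the shrinker equation and its derivative estimates through the equivariant collapsing limit when $\tilde f$ is unbounded of quadratic growth (forced by $R+|\nabla f|^2=f$), so that the limiting equation and bounds hold uniformly on the noncompact $\tilde Y$ away from $\tilde p_\infty$. Second, executing the PRT-style gluing in the weighted setting so that the lower bound $\Rc_{\tilde f}\ge\tfrac12$ and the non-compactness of the resulting space are both preserved; unlike the sectional curvature setting, one only has a one-sided Ricci-type bound, so the Toponogov comparison must be replaced by its Bakry--\'Emery analogue, and the topological input on $\pi_2$ must be aligned precisely with the nilpotent structure furnished by Cheeger--Fukaya--Gromov.
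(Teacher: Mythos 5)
Your setup (contradiction, entropy--volume equivalence, Cheeger--Fukaya--Gromov collapsing via frame bundles, and the use of $\pi_1=0$ and finite $\pi_2$ to force abelian, stable torus fibers) matches the paper's architecture, but the final step --- where the contradiction is supposed to come from --- has a genuine gap. You conclude by producing a complete noncompact weighted space with Bakry--\'Emery Ricci curvature $\ge \tfrac12$ and declaring this ``incompatible with the Bakry--\'Emery analogue of the PRT compactness statement (the content of Theorem \ref{thm:002}).'' There is no such analogue: a positive lower bound on $\Rc+\na^2 f$ does \emph{not} force compactness, as the Gaussian shrinker $(\R^m,g_E,|x|^2/4)$ shows, so the mere existence of a complete noncompact space with $\Rc_f\ge\tfrac12$ is not a contradiction. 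Moreover, invoking Theorem \ref{thm:002} here is circular: that theorem is the generalization of Theorem \ref{thm:001} to Bakry--\'Emery spaces and is proved in the paper by the very same argument, not an independent compactness input playing the role of Bonnet--Myers for Alexandrov spaces in \cite{PRT99}.

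What the paper actually does at this stage is more specific, and the extra structure is exactly what replaces the missing Myers-type statement. One unwraps a fixed circle factor of the torus fiber (the Euler-class obstruction dies in the limit because the circle lengths tend to zero) to build an $\R$-bundle $W$ over $X$ that is complete, whose $\R$-orbits are noncompact, and on whose regular part the limit satisfies $\Rc(g_W)+\na^2_{g_W}\bar f_W\ge\tfrac12 g_W$, where $\bar f_W=f_W+\mu$ includes the density term $\mu$ coming from the O'Neill formula for the torus quotient (a term absent from your sketch but necessary for the inequality). The contradiction is then obtained by applying the second variation formula to minimizing geodesics joining a fixed regular point $w_0$ to far-away points $w_l$ \emph{on the same $\R$-orbit}: since $\bar f_W$ is invariant under the isometric $\R$-action, the boundary terms $|\na\bar f_W|$ at both endpoints are uniformly bounded, so the inequality forces $d_W(w_0,w_l)/2-C\le 4\sup_{B(w_0,1)}|\na\bar f_W|$, which fails as $d_W(w_0,w_l)\to\infty$. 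It is precisely this $\R$-invariance of the potential along a noncompact direction --- not the curvature-dimension bound alone --- that rules out the collapsed limit; your proposal needs this (or an equivalent mechanism) to close the argument. A smaller point: the finiteness of $\pi_1$ is what makes the nilmanifold fibers tori (via Malcev rigidity), while finiteness of $\pi_2$ is used for stability of the resulting principal $T^k$-bundles along the sequence; your sketch blends these two roles.
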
 

Note that a similar result does not hold if the Ricci shrinker is replaced by an ancient solution to the Ricci flow with a type-I curvature bound and the same topological assumptions; see \cite{BKN12}.

The proof of Theorem \ref{thm:001} relies on a contradiction argument. Suppose there exists a sequence of Ricci shrinkers $(M_i^n, g_i, f_i,p_i)$ satisfying the given assumptions with $\mmu(g_i) \to -\infty$. Our goal is to derive a contradiction.

By taking a subsequence, we assume that $(M_i^n, g_i, f_i,p_i)$ converges in the pointed Gromov-Hausdorff sense to a limit metric space $(X, d_X, f_X, p_X)$. The first task is to analyze the structure of the limit space $X$. Since the curvature is uniformly bounded, we apply the collapsing theory with bounded sectional curvature, initially developed through Gromov's celebrated theorem on almost flat manifolds \cite{Gro78} and further advanced by Cheeger, Fukaya, and Gromov in the 1980s and 1990s (see \cite{CG86} \cite{CG90} \cite{Fu87} \cite{Fu88} \cite{Fu89} and \cite{CFG92}, among others). It can be shown (see Proposition \ref{prop:po2}) that $(X, d_X)$ is a smooth Riemannian orbifold, except for a singular set of high codimension.

Furthermore, for sufficiently large $i$,  each $M_i$ locally admits a pure $N$-structure arising from a singular fibration. Specifically, we consider the oriented frame bundles $FM_i$ equipped with the bundle metrics $\tilde g_i$, the lifted function $\tilde f_i$ and the lifted base points $\tilde{p}_i$. The sequence $(FM_i, \tilde g_i, \so, \tilde f_i, \tilde p_i)$ converges, in the equivariant pointed Gromov-Hausdorff sense, to a limit space $(Y,g_Y, \so, f_Y, p_Y)$, where $(Y, g_Y)$ is a smooth Riemannian manifold, and the quotient $(Y, g_Y)/\so$ is isometric to $(X, d_X)$.

For a fixed large number $s>0$, there exists an $\so$-equivariant fibration $\tilde \Phi_i: \tilde U_i(s) \to \tilde \Omega(s)$, where $\tilde \Omega(s)=\{x \in Y \mid f_Y(x) \le s\}$, and $\tilde U_i (s)$ closely approximates $\tilde \Omega_i(s)=\{x \in FM_i \mid \tilde f_i(x) \le s\}$; see Theorem \ref{thm:fib}. The singular fibration on $U_i(s) \subset M_i$, where $U_i(s)=\tilde U_i(s) /\so$, is given by the quotient of $\tilde \Phi_i$. In particular, there exists a singular fibration $\Phi_i: U_i(s) \to \Omega_i(s)$, where $\Omega_i(s)=\{x \in X \mid f_X(x) \le s\}$, such that the following diagram commutes:
\[\begin{tikzcd} 
{(\tilde U_i(s),\tilde{g}_i)} && {(\tilde \Omega(s),d_{g_Y})} \\
\\
{(U_i(s),g_i)} && {(\Omega(s),d_X)}
\arrow["\Phi_i", from=3-1, to=3-3]
\arrow["{\mathrm{mod}\ \so}", from=1-3, to=3-3]
\arrow["{\mathrm{mod}\ \so}", from=1-1, to=3-1]
\arrow["\tilde \Phi_i", from=1-1, to=1-3]
\end{tikzcd}\]

For sufficiently large $s$, $U_i(s)$ captures the topological structure of $M_i$. Specifically, as shown in Proposition \ref{prop:topo3}, $U_i(s)$ is homotopy equivalent to $M_i$, and hence is simply connected by our assumption. A result of Rong \cite{Rong96} (see Propositions \ref{prop:topo4} and \ref{prop:topo5}) implies that the singular fibration $\Phi_i$ is realized by a $T^k$-action on $U_i(s)$, where the orbits correspond to the singular fibers of $\Phi_i$. Moreover, one can construct a $T^k$-invariant metric $g_i'$ and a function $f_i'$ that are close to the original metric $g_i$ and $f_i$; see Proposition \ref{prop:ave}.

For simplicity, we assume $U_i(s)=M_i$, $\tilde U_i(s)=FM_i$, $\Omega(s)=X$, and $\tilde \Omega(s)=Y$, for a sufficiently large fixed $s$. The general case involves choosing an appropriate sequence of $s_j \to \infty$ and taking a diagonal subsequence; see Theorem \ref{thm:dia}. 

The assumption on the second homotopy group of $M_i$ implies that the collapsing $M_i \to X$ is stable, in the sense that all corresponding $T^k$-actions are equivalent up to  conjugations of $\mathrm{GL}(\Z, k)$ on $T^k$. Consequently, all $M_i$ are diffeomorphic to one another. We may therefore write $M=M_i$, with $g'_i$ and $f_i'$ as metrics and functions on $M$, respectively.

To derive a contradiction, we follow the approach in \cite{PRT99} by constructing a metric space $W$, which is an $\R$-bundle over $X$. This involves unwrapping a fixed circle component within each singular fiber, which is a quotient of $T^k$. This unwrapping process is the most technical part of the paper, and we provide full details for its implementation. Briefly, we consider the principal circle bundle $F/T^{k-1} \to Y$, where $F$ denotes either the oriented frame bundle over $M$ or the $\spin$-bundle over $M$. We then unwrap the circle fibers to form an $\R$-bundle $Z$ over $Y$. In general, this unwrapping process is obstructed by the Euler class of the circle bundle. However, as observed in \cite{PRT99}, the Euler class vanishes in the limit, as the length of the circle fibers tends to zero. This allows us to construct an $\R$-bundle $W$ over $X$, obtained as the quotient of $Z$ by $\so$ or $\spin$. 

By construction, $W$ with the induced metric is a noncompact, complete metric space; see Lemma \ref{L404}. At this point, the Ricci shrinker equation \eqref{E100} becomes crucial. On the regular part $(\mathcal R_W, g_W)$ of $W$, we derive the equation:
\begin{align} \label{eq:main}
\Rc(g_W)+\na_{g_W}^2(\bar f_W) \ge \frac{1}{2} g_W.
\end{align}
where $\bar f_W$ is a smooth, $\R$-invariant function on $\mathcal R_W$, obtained as the limit of $f'_i$ plus a density function. The desired contradiction then follows from the second variational formula for minimizing geodesics connecting two regular points on an $\R$-orbit in $W$.

As a direct corollary to Theorem \ref{thm:001}, we have:

\begin{cor} \label{cor:001}
Given $n$ and $A$, there are only finitely many diffeomorphism types of simply connected $n$-dimensional manifolds $M$ with finite second homotopy groups that admit a Ricci shrinker metric satisfying $|\Rm| \le A$.
\end{cor}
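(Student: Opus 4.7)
\textbf{Proof proposal for Corollary \ref{cor:001}.} The plan is to argue by contradiction, combining the uniform entropy bound from Theorem \ref{thm:001} with the pointed Cheeger--Gromov compactness theorem. Suppose there existed an infinite sequence of pairwise non-diffeomorphic simply connected Ricci shrinkers $(M_i^n,g_i,f_i)$ with finite $\pi_2(M_i)$ and $|\Rm|\le A$. By Theorem \ref{thm:001}, the entropies satisfy $\mmu(g_i)\ge -C$ for a uniform constant $C=C(n,A)$. Choose base points $p_i$ at minima of $f_i$. Lemma \ref{L202} converts the entropy bound into a uniform positive lower bound on the volume of the unit $g_i$-ball at $p_i$. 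Together with $|\Rm|\le A$, this yields a uniform lower bound on the injectivity radius at $p_i$, and hence uniform bounded geometry at every scale.

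After passing to a subsequence, the Cheeger--Gromov compactness theorem produces a complete pointed smooth limit $(M_\infty,g_\infty,p_\infty)$ with $(M_i,g_i,p_i)\to(M_\infty,g_\infty,p_\infty)$ in the pointed $C^\infty$ sense (the passage from $C^{1,\alpha}$ to $C^\infty$ uses the shrinker equation \eqref{E100} as an elliptic system for the metric in harmonic coordinates). The same regularity gives smooth convergence $f_i\to f_\infty$ on compact sets, and the limit $(M_\infty,g_\infty,f_\infty)$ is itself a Ricci shrinker. The Cao--Zhou estimates (whose constants depend only on $n$) then show that each $f_i$ grows quadratically with uniform constants and has all critical points in a uniformly bounded region; in particular, for every fixed $s>0$ the sublevel sets $\Omega_i(s)=\{f_i\le s\}$ lie inside $g_i$-balls around $p_i$ of radius bounded independently of $i$.

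Fix $s$ large enough that it is a regular value of $f_\infty$ and exceeds all critical values of every $f_i$ and of $f_\infty$. The pointed $C^\infty$ convergence then gives, for all sufficiently large $i$, a diffeomorphism of compact smooth manifolds with boundary $\Omega_i(s)\cong \Omega_\infty(s)$. Meanwhile, the gradient flow of $f$ identifies $\{f\ge s\}$ with the half-cylinder $\partial\Omega(s)\times[0,\infty)$, so each Ricci shrinker is determined up to diffeomorphism by the smooth type of the compact manifold with boundary $\Omega(s)$. Consequently $M_i\cong M_\infty$ for all large $i$, contradicting the assumption of infinitely many distinct diffeomorphism types.

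The main obstacle is upgrading smooth convergence of sublevel sets to a global diffeomorphism of the non-compact manifolds $M_i$; this is resolved by the properness of the potential with uniformly quadratic growth, together with the fact that all critical points of $f$ lie in a uniformly bounded region, both of which follow from the Cao--Zhou identities under the hypothesis $|\Rm|\le A$. Once this is in place, the remainder is a routine application of Cheeger--Gromov compactness adapted to the exhausted Ricci shrinker setting.
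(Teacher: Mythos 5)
Your argument is correct and is essentially the paper's: the paper deduces the corollary from Theorem \ref{thm:001} together with the pointed $C^\infty$ compactness of $\mathcal E_B(n,A)$ (Proposition \ref{prop:smooth}) and the fact that $M$ is diffeomorphic to $\mathrm{int}(\Omega_M(s))$ for large $s$ (Lemma \ref{L203}~(a)), which is exactly your entropy bound $\Rightarrow$ non-collapsing $\Rightarrow$ Cheeger--Gromov limit $\Rightarrow$ identification of sublevel sets route. The only difference is that you spell out the compactness and sublevel-set stability by hand where the paper cites \cite{LLW21} and its Lemma \ref{L203}.
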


The method used to prove Theorem \ref{thm:001} can be extended to a broader class of smooth metric measure spaces. To define this class, we introduce the following:

\begin{defn} \label{def:main}
Given a constant $\kappa>0$, let $\mathcal N(n, A, \kappa)$ be a class of pointed smooth metric measure spaces $(M^n, g, f, p)$ satisfying 
\begin{enumerate}[label=\textnormal{(\roman{*})}]
\item $(M^n, g)$ is a complete $n$-dimensional smooth Riemannian manifold with $|\Rm| \le A$.

\item $f$ is a $C^2$-function on $M$ such that $p$ is minimum point, $f(p)=0$, and $|\na^2 f| \le A$.

\item The Bakry-\'Emery condition holds: $\Rc+\na^2 f \ge \kappa g$.

\item $M$ is simply connected and has a finite second homotopy group.
\end{enumerate}
\end{defn}

\begin{thm} \label{thm:002}
There exists a constant $v=v(n, A,\kappa)>0$ such that for any $(M^n, g, f, p) \in \mathcal N(n, A, \kappa)$,
\begin{align*} 
\mathrm{Vol}_g \lc B_g(p, 1) \rc \ge v.
\end{align*}
\end{thm}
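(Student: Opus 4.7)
The proof follows the scheme of Theorem~\ref{thm:001}, with the Bakry--\'Emery inequality replacing the shrinker equation throughout. Suppose for contradiction that no such $v$ exists, and choose a sequence $(M_i^n,g_i,f_i,p_i)\in\mathcal N(n,A,\kappa)$ with $\mathrm{Vol}_{g_i}(B_{g_i}(p_i,1))\to 0$. Since $p_i$ minimizes $f_i$ with $f_i(p_i)=0$ and $|\nabla^2 f_i|\le A$, we have $|\nabla f_i|\le A\,d_{g_i}(p_i,\cdot)$, so the $f_i$ are uniformly $C^{1,1}$-bounded on each fixed ball $B_{g_i}(p_i,R)$, which is all that the Cheeger--Fukaya--Gromov collapsing theory used in the excerpt requires. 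Passing to a subsequence, the oriented frame bundles $(FM_i,\tilde g_i,\mathrm{SO}(n),\tilde f_i,\tilde p_i)$ converge equivariantly to a limit $(Y,g_Y,\mathrm{SO}(n),f_Y,p_Y)$ with $Y$ a smooth Riemannian manifold, and the quotient $(X,d_X,f_X,p_X)$ is a Riemannian orbifold off a high-codimensional singular set, isometric to the pointed Gromov--Hausdorff limit of $(M_i,g_i,f_i,p_i)$.

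For a large fixed $R$ one obtains, for $i\gg 1$, an $\mathrm{SO}(n)$-equivariant fibration $\tilde\Phi_i:\tilde U_i\to\tilde\Omega$ and its singular-fibration quotient $\Phi_i:U_i\to\Omega\supset B_{d_X}(p_X,R)$, with $U_i\supset B_{g_i}(p_i,1)$. Because $M_i$ is simply connected with finite $\pi_2$, Rong's theorem (Propositions~\ref{prop:topo4}, \ref{prop:topo5}) promotes this to a $T^k$-action on $U_i$ whose orbits are its singular fibers, with all such actions equivalent up to automorphisms in $\mathrm{GL}(\Z,k)$. Averaging as in Proposition~\ref{prop:ave} yields $T^k$-invariant metrics $g_i'$ and functions $f_i'$, $C^{1,\alpha}$-close to $g_i,f_i$, such that
\begin{equation*}
\Rc(g_i') + \nabla^2_{g_i'} f_i' \ \ge\ \bigl(\kappa-o(1)\bigr)\,g_i'.
\end{equation*}

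Next, I unwrap a circle factor in the $T^k$-fibers exactly as in the proof of Theorem~\ref{thm:001}: via the principal circle bundle $F/T^{k-1}\to Y$ (where $F$ is the oriented frame bundle or the $\mathrm{Spin}(n)$-bundle of $M_i$), vanishing of the Euler class in the limit (as the fiber lengths tend to $0$) produces an $\R$-bundle $Z$ over $Y$ whose $\mathrm{SO}(n)$- or $\mathrm{Spin}(n)$-quotient is an $\R$-bundle $W$ over $X$ carrying a complete metric $g_W$ and a free isometric $\R$-action. On the regular part $\mathcal R_W$ the limit function $\bar f_W$, constructed from $f_i'$ plus a $\log$-Jacobian density correction absorbing the collapsing directions, is $\R$-invariant and smooth, and the computation leading to~\eqref{eq:main} applies verbatim to give
\begin{equation*}
\Rc(g_W) + \nabla^2_{g_W} \bar f_W \ \ge\ \kappa\, g_W \qquad \text{on } \mathcal R_W.
\end{equation*}

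To conclude, apply the second variation formula as in the proof of Theorem~\ref{thm:001} to a minimizing geodesic in $W$ connecting two regular points on the same $\R$-orbit. Since the $\R$-action is free and proper, the $\R$-orbits in $W$ are properly embedded and complete, so such pairs of points may be taken at arbitrarily large distance $L\to\infty$; and because $\bar f_W$ is $\R$-invariant it agrees at the two endpoints, causing the boundary terms arising from $\bar f_W$ in the Bakry--\'Emery Myers-type inequality to vanish. The displayed inequality then forces $L\le L(n,\kappa)$, the desired contradiction. The main technical obstacle, shared with the proof of Theorem~\ref{thm:001}, is the unwrapping step: in the present metric-measure setting (where the shrinker normalization~\eqref{E101} is absent) one must verify that the density correction producing $\bar f_W$ remains well-defined, $\R$-invariant and smooth on $\mathcal R_W$, and that the Bakry--\'Emery lower bound $\kappa$ descends to the limit; both follow by tracking the averaging and unwrapping constructions of the proof of Theorem~\ref{thm:001}, since only the uniform bounds $|\Rm|\le A$ and $|\nabla^2 f|\le A$ are used there to guarantee convergence.
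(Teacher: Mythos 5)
Your overall scheme (contradiction, collapsing, $T^k$-actions, unwrapping, second variation with $\kappa$ in place of $1/2$) is the same as the paper's, but there is a genuine gap at the very first step: you assert that the uniform bounds $|\Rm|\le A$ and $|\nabla^2 f|\le A$ (i.e.\ $C^{1,1}$ control of $f$) are ``all that the collapsing theory used in the excerpt requires.'' That is not the case. The proof of Theorem \ref{thm:001} runs through Theorem \ref{thm:dia}, and essentially every step there uses uniform \emph{higher-order} estimates: the smooth limit structure of the frame bundles $(Y,g_Y)$ and the smoothness of $f_Y$ (Lemma \ref{L204}, Proposition \ref{prop:po1}) need $|\nabla^l\Rm|\le C_l$ and $|\nabla^l f|\le C_l$ for all $l$; the averaging that produces the $T^k$-invariant data is needed in $C^l$-norms for large $l$ (Theorem \ref{thm:dia} (b)) --- in particular, the mere $C^{1,\alpha}$-closeness of $g_i'$ to $g_i$ that you invoke does not control $\Rc(g_i')$, so your displayed inequality $\Rc(g_i')+\nabla^2_{g_i'}f_i'\ge(\kappa-o(1))g_i'$ does not follow; and the local smooth convergence in Lemma \ref{L401a}, which is what makes the unwrapped metric $\bar h^{\alpha}$ and the limit Hessian of $\bar f_W$ exist at all, explicitly uses uniformly bounded higher derivatives of curvature. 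For Ricci shrinkers these bounds come for free from Shi's estimates and the soliton equation (see \eqref{eq:curbound1} and \eqref{eq:fbound}), but a member of $\mathcal N(n,A,\kappa)$ has $f$ only $C^2$ and no derivative bounds on $\Rm$, so the Theorem \ref{thm:001} argument cannot be quoted ``verbatim.''

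The paper closes exactly this gap with a preliminary smoothing step that your proposal omits: run the Ricci flow from $g$ for a definite time $\epsilon_1(n,A,\kappa)$ and solve the heat equation for $f$ along it (Propositions \ref{prop:smoothx}--\ref{prop:smooth3}, Theorem \ref{thm:smooth}). This yields data in $\mathcal M(n,A',\kappa/2)$ with uniform bounds on all $|\nabla^k\Rm|$ and $|\nabla^{k+2}f|$, with the Bakry--\'Emery lower bound preserved up to a factor (the maximum-principle argument of Proposition \ref{prop:smooth2}, which itself needs the growth and integral estimates of Proposition \ref{prop:smoothx} to be justified on a noncompact manifold), and with $\mathrm{Vol}(B(p,1))$ comparable before and after smoothing, so the collapsing hypothesis survives; only then does the machinery of Sections 3--4 apply. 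If you insert such a smoothing argument (or another source of uniform higher-order estimates), the rest of your outline goes through. A minor further point: in the final second-variation step the boundary terms do not ``vanish'' because $\bar f_W$ agrees at the endpoints; as in \eqref{eq:413} they are merely bounded by $\sup_{B_{g_W}(w_0,1)}|\nabla\bar f_W|$, using the isometric $\R$-action to transport the estimate from the far endpoint back to $w_0$.
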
 

It follows directly that any Ricci shrinker in Theorem \ref{thm:001} belongs to $\mathcal N(n, C(n)A+1/2, 1/2)$ if the potential function $f$ is shifted by a constant to satisfy $f(p)=0$. Thus, Theorem \ref{thm:002} extends Theorem \ref{thm:001}. Furthermore, Theorem \ref{thm:002} also generalizes \cite[Theorem 0.4]{PT99}, in which the potential function $f$ is identically zero.

There are two main differences between Ricci shrinkers and metric measure spaces defined above. First, the spaces in $\mathcal N(n, A, \kappa)$ satisfy only the Bakry-\'Emery condition as in Definition \ref{def:main} (iii), which is weaker than the equality in the Ricci shrinker equation \eqref{E100}. However, this is not a significant issue because the lower bound $\kappa g$ is sufficient for deriving the key inequality \eqref{eq:main}. The second difference is that spaces in $\mathcal N(n, A, \kappa)$ do not inherently satisfy higher-order estimates for $\Rm$ or $f$. In the Ricci shrinker case, such estimates are guaranteed by Shi's estimates and the Ricci shrinker equation (see \eqref{eq:curbound1} and \eqref{eq:fbound}).

To address these challenges, we smooth the metric measure space $(M^n, g, f, p) \in \mathcal N(n, A, \kappa)$ using Ricci flow, where $g$ serves as the initial metric, and smooth $f$ by solving the heat equation under the Ricci flow. Through careful estimates, we construct a new metric $g'$ and function $f'$ on $M$ such that $(M^n, g', f', p') \in \mathcal N(n, A', \kappa/2)$, where $A'=A'(n, A, \kappa)$ is a constant, and $g'$ and $f'$ satisfy uniform higher-order estimates. Moreover, the volume of the unit ball around $p$ with respect to $g$ is uniformly comparable to the volume of the unit ball around $p'$ with respect to $g'$; see Theorem \ref{thm:smooth}. This allows us to apply the same contradiction argument used in the proof of Theorem \ref{thm:001} to establish Theorem \ref{thm:002}.

The structure of this paper is as follows. In Section 2, we review fundamental concepts and properties of Ricci shrinkers. Section 3 addresses the general behavior of collapsing sequences of Ricci shrinkers with bounded curvature, using the collapsing theory for spaces with bounded sectional curvature. In Section 4, we prove the main Theorem \ref{thm:001} through the gluing process described earlier. Finally, Section 5 extends these methods to prove Theorem \ref{thm:002}.
\\
\\
\\
{\bf Acknowledgements}: 
We would like to thank Ruobing Zhang for helpful discussions on the collapsing theory. Conghan Dong is supported by the NSF grant DMS-1928930, while he was in residence at the Simons Laufer Mathematical Sciences Institute (formerly MSRI) in Berkeley, California, during the Fall 2024 semester. Yu Li is supported by YSBR-001, NSFC-12201597, and research funds from University of Science and Technology of China and Chinese Academy of Sciences. 

\section{Preliminaries}

In this section, we review some basic concepts and fundamental properties of Ricci shrinkers.

Let $(M^n, g, f)$ be a Ricci shrinker with normalization \eqref{E101}. From \cite[Corollary $2.5$]{CBL07}, it follows that the scalar curvature $R \ge 0$, with $R>0$ unless $(M^n,g)$ is isometric to the Gaussian soliton $(\R^n,g_E)$, as guaranteed by the strong maximum principle.

For the potential function $f$, we have the following fundamental estimates.

\begin{lem}[\cite{CZ10} \cite{HM11}]
\label{L201}
Let $(M^n,g,f)$ be a Ricci shrinker. Then $f$ satisfies the quadratic growth estimate:
\begin{align*}
\frac{1}{4}\left(d(x,p)-5n \right)^2_+ \le f(x) \le \frac{1}{4} \left(d(x,p)+\sqrt{2n} \right)^2
\end{align*}
for all $x\in M$, where $p$ is a minimum point of $f$ and $a_+ :=\max\{0,a\}$.
\end{lem}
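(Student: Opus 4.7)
The two inequalities require rather different methods and I would handle them in turn.

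For the upper bound, the plan is to exploit the normalization $R + |\na f|^2 = f$ together with the standard fact that $R \ge 0$ on any Ricci shrinker, which together yield $|\na f|^2 \le f$, equivalently $|\na \sqrt{f}| \le \frac{1}{2}$ wherever $f>0$. To close the estimate one also needs a bound on $f(p)$ itself: taking the trace of \eqref{E100} gives $R + \Delta f = n/2$, and at the minimum point $p$ one has $\na f(p) = 0$ and $\Delta f(p) \ge 0$, so by \eqref{E101} we get $f(p) = R(p) \le n/2$. Integrating $|\na \sqrt{f}| \le \frac{1}{2}$ along any minimizing curve from $p$ to $x$ then gives $\sqrt{f(x)} \le \sqrt{n/2} + \frac{1}{2} d(x,p)$, which squares to the claimed upper bound.

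For the lower bound, I would work along a minimizing unit-speed geodesic $\gamma : [0, s_0] \to M$ from $p$ to $x$, where $s_0 = d(x,p)$. Setting $\phi(s) := f(\gamma(s))$, the shrinker equation \eqref{E100} restricted to $\gamma$ reads
\begin{align*}
\phi''(s) \;=\; \na^2 f(\gamma',\gamma') \;=\; \frac{1}{2} - \Rc(\gamma', \gamma'),
\end{align*}
so integration yields $\phi'(s_0) - \phi'(0) = s_0/2 - \int_0^{s_0} \Rc(\gamma',\gamma')\, ds$. The problem therefore reduces to an upper bound on $\int_0^{s_0} \Rc(\gamma',\gamma')\, ds$, which is exactly the setting for a second-variation argument: testing the index form of $\gamma$ with $n-1$ parallel orthonormal fields $e_i$ perpendicular to $\gamma'$, each multiplied by a piecewise-linear cutoff $\psi$ satisfying $\psi(0) = \psi(s_0) = 0$ and $\psi \equiv 1$ on $[1, s_0 - 1]$ (assuming $s_0 \ge 2$), the minimality of $\gamma$ forces
\begin{align*}
\int_0^{s_0} \psi^2 \, \Rc(\gamma',\gamma')\, ds \;\le\; (n-1) \int_0^{s_0} (\psi')^2\, ds \;\le\; 2(n-1).
\end{align*}
The missing contributions from the two short intervals $[0,1]$ and $[s_0-1, s_0]$, where $\psi<1$, are then controlled by rewriting $\Rc(\gamma', \gamma') = \frac{1}{2} - \phi''$ and invoking the pointwise bound $|\phi'| = |g(\na f,\gamma')| \le |\na f| \le \sqrt{f}$ already established during the upper-bound step.

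Combining these estimates yields a linear lower bound for $\phi'(s_0)$ of the form $\phi'(s_0) \ge \frac{1}{2} s_0 - C(n) - C'(n) \sqrt{f(x)}$, and a further integration using again $|\phi'| \le \sqrt{f}$ converts this into the desired quadratic lower bound for $f(x)$ in terms of $d(x,p)$. The main obstacle is simply bookkeeping: tracking the sharp numerical constant $5n$ requires keeping careful account of the boundary contributions in the second-variation estimate and of the endpoint values of $\phi'$, but the overall structure is standard (cf.\ \cite{CZ10}, \cite{HM11}). This is also why the conclusion is stated with $(d(x,p) - 5n)_+$ rather than $d(x,p) - 5n$, since the argument only gives information once $d(x,p)$ is sufficiently large compared to $n$.
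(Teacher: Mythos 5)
The paper itself does not prove this lemma — it is quoted from \cite{CZ10} and \cite{HM11} — and your outline is exactly the standard argument of those references. Your upper bound is complete and correct: $R\ge 0$ plus the normalization gives $|\nabla \sqrt{f}|\le \tfrac12$, the traced equation gives $f(p)=R(p)\le n/2$ at the minimum, and integrating along a minimal geodesic yields $\sqrt{f(x)}\le \sqrt{n/2}+\tfrac12 d(x,p)$, i.e.\ the stated bound.

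For the lower bound the ingredients are the right ones (second variation with the cutoff $\psi$, the substitution $\Rc(\gamma',\gamma')=\tfrac12-\phi''$ on the two end intervals, and $|\phi'|\le|\nabla f|\le\sqrt f$), but the step by which you propose to close does not work as written, and the problem is more than the additive constant $5n$. When you integrate $(1-\psi^2)\phi''$ by parts over $[s_0-1,s_0]$, the boundary term is exactly $-\phi'(s_0)$ (since $1-\psi^2=1$ at $s=s_0$, while $\nabla f(p)=0$ kills the other endpoint), and this cancels the $\phi'(s_0)$ coming from $\phi'(s_0)-\phi'(0)=\tfrac{s_0}{2}-\int_0^{s_0}\Rc(\gamma',\gamma')\,ds$; so this computation gives no lower bound on $\phi'(s_0)$ at all. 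Moreover, even if one had $\phi'(s_0)\ge \tfrac{s_0}{2}-C(n)-C'(n)\sqrt{f(x)}$ with some $C'(n)>0$, comparing with $\phi'(s_0)\le\sqrt{f(x)}$ only yields $\sqrt{f(x)}\ge \tfrac{s_0}{2(1+C'(n))}-\dots$, which degrades the leading coefficient and therefore does not prove the stated estimate $\tfrac14\lc d(x,p)-5n\rc_+^2\le f(x)$ (it would only give a quadratic bound with a smaller constant in front). The correct finish, as in \cite{CZ10} and \cite{HM11}, is to observe that after the cancellation the surviving term is the weighted average $\int_{s_0-1}^{s_0}2\psi\,\phi'\,ds\ \ge\ \tfrac{s_0}{2}-2(n-1)-\tfrac76-\sqrt{n/2}$; since $\int_{s_0-1}^{s_0}2\psi\,ds=1$ and $\phi'\le\sqrt f\le\sqrt{f(x)}+\tfrac12$ on that interval (Lipschitz bound on $\sqrt f$ again), this gives directly $\sqrt{f(x)}\ \ge\ \tfrac{s_0}{2}-2(n-1)-\tfrac53-\sqrt{n/2}\ \ge\ \tfrac{s_0}{2}-\tfrac{5n}{2}$, preserving the coefficient $\tfrac12$ and hence $\tfrac14$ after squaring; the case $d(x,p)\le 5n$ (and $s_0<2$) is covered by $f\ge 0$, which is what the subscript $+$ encodes. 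So your plan is salvageable with a small but genuine correction to how the endpoint terms are handled, not mere constant-chasing.
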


It follows from Lemma \ref{L201} that any minimum point lies in a ball of fixed size. In particular, we can designate a minimum point $p$ as the base point of the Ricci shrinker and represent it as $(M^n, g, f, p)$.

Next, we recall the definition of the entropy of a Ricci shrinker.

\begin{defn}\label{dfn:201}
Let $(M^n,g,f)$ be a Ricci shrinker with normalization \eqref{E101}. Its entropy is defined as
\begin{align*}
\mmu=\mmu(g)\coloneqq \log \int_M \frac{e^{-f}}{(4\pi)^{n/2}}\, dV_g.
\end{align*}
\end{defn}

The entropy $\mmu$ coincides with Perelman's celebrated entropy functional $\mmu(g,1)$; see \cite{CN09} and \cite[Proposition $3$]{LW20}. Furthermore, $\mmu$ is always nonpositive and finite for any Ricci shrinker; see \cite[Theorem $1$]{LW20}. For a detailed discussion of the $\mmu$-functional on Ricci shrinkers and its properties, we refer readers to \cite[Section $5$]{LW20}.

The following characterization of $\mmu$, proved in \cite[Lemma $2.5$]{LLW21}, states that the unit ball around the base point collapses if and only if the entropy approaches $-\infty$.
\begin{lem}
\label{L202}
Let $(M^n,g,f,p)$ be a Ricci shrinker. Then, there exists a constant $C=C(n)>1$ such that
\begin{align*}
C^{-1} e^{\mmu} \le \mathrm{Vol}_g \lc B_g(p, 1) \rc \le C e^{\mmu}
\end{align*}
\end{lem}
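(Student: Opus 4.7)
The statement splits into two inequalities of rather different character. The easy direction is $\mathrm{Vol}_g(B_g(p,1)) \le C e^{\mmu}$. By Lemma \ref{L201} we have $f(x) \le \frac{1}{4}(1+\sqrt{2n})^2 =: C_1(n)$ on $B_g(p,1)$, while the normalization $R+|\nabla f|^2=f$ combined with $R \ge 0$ forces $f \ge 0$ on all of $M$. Thus $e^{-f} \ge e^{-C_1}$ on the unit ball, and integrating yields
\begin{align*}
(4\pi)^{n/2} e^{\mmu} \;=\; \int_M e^{-f}\, dV_g \;\ge\; \int_{B_g(p,1)} e^{-f}\, dV_g \;\ge\; e^{-C_1}\, \mathrm{Vol}_g(B_g(p,1)).
\end{align*}

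For the harder inequality $e^{\mmu} \le C \mathrm{Vol}_g(B_g(p,1))$, my plan is a concentration-plus-comparison argument. First I would establish the shrinker identity $\Delta_f f = n/2 - f$, which follows by tracing \eqref{E100} to obtain $\Delta f = n/2 - R$ and using $|\nabla f|^2 = f - R$ from \eqref{E101}. Integrating against $e^{-f}\,dV_g$ (the weighted Laplacian being self-adjoint on this measure) gives the key moment identity
\begin{align*}
\int_M f\, e^{-f}\, dV_g \;=\; \frac{n}{2}\int_M e^{-f}\, dV_g.
\end{align*}
Combined with the quadratic lower bound $f \ge \frac{1}{4}(d_g(\cdot,p)-5n)_+^2$ from Lemma \ref{L201} and Chebyshev's inequality, this pins at least half of the total weighted mass inside a ball $B_g(p, r_0)$ for some explicit $r_0=r_0(n)$, yielding
\begin{align*}
(4\pi)^{n/2} e^{\mmu} \;\le\; 2\int_{B_g(p,r_0)} e^{-f}\, dV_g \;\le\; 2\,\mathrm{Vol}_g(B_g(p,r_0)).
\end{align*}

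The main obstacle is then the purely metric comparison $\mathrm{Vol}_g(B_g(p,r_0)) \le C(n)\,\mathrm{Vol}_g(B_g(p,1))$ across a ball whose radius depends only on $n$. Standard Bishop-Gromov is unavailable since the Ricci curvature of a shrinker may be arbitrarily negative. My plan is to invoke a weighted Bishop-Gromov comparison in the Bakry-\'Emery framework: the shrinker equation gives $\Rc_f = \frac{1}{2}g > 0$, while Lemma \ref{L201} provides the uniform gradient bound $|\nabla f| = \sqrt{f-R} \le \sqrt{f} \le C_2(n)$ on $B_g(p,r_0)$. Wei-Wylie's relative volume comparison then yields $\mathrm{Vol}_g(B_g(p,r_0)) \le C(n)\,\mathrm{Vol}_g(B_g(p,1))$, and chaining the estimates completes the proof.
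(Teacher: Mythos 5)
Your proposal is correct, and note that for this lemma the paper offers no proof of its own: it is quoted from \cite[Lemma 2.5]{LLW21}, so your argument stands as an independent route. The easy direction (using $f\ge 0$, which follows from $R\ge 0$ and \eqref{E101}, together with $f\le C(n)$ on the unit ball) is the standard one. For the hard direction, your chain --- tracing \eqref{E100} and using \eqref{E101} to get $\Delta_f f=\tfrac n2-f$, hence the moment identity $\int_M f e^{-f}dV_g=\tfrac n2\int_M e^{-f}dV_g$, then Chebyshev with the lower bound of Lemma \ref{L201} to trap half of the weighted mass in $B_g(p,r_0(n))$, and finally a Bakry-\'Emery relative volume comparison on $B_g(p,r_0)$ using $\Rc+\na^2 f=\tfrac12 g>0$ and $|\na f|\le\sqrt f\le C(n)$ there --- is sound, and it differs from the cited proof, which rests on uniform volume growth estimates for shrinkers (of Cao--Zhou/Haslhofer--M\"uller type, $\mathrm{Vol}_g(B_g(p,r))\le C(n)r^n$) and properties of the $\mmu$-functional rather than on a Wei--Wylie comparison; your version is self-contained modulo that comparison theorem, while the cited route avoids Bakry-\'Emery machinery at the cost of the uniform volume bound. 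Two points you should make explicit: (i) the integration by parts behind the moment identity needs justification on noncompact $M$ --- a cutoff argument works, using $|\na f|\le\sqrt f$, the finiteness of the weighted volume (the paper records that $\mmu$ is finite, citing \cite{LW20}), and the (non-uniform) volume growth bound of \cite{CZ10}, which is enough since only the exact identity, not a uniform constant, is needed here; (ii) the Wei--Wylie comparison under a gradient bound is stated for the weighted volume $e^{-f}dV_g$, but since $0\le f\le C(n)$ on $B_g(p,r_0)$ the weighted and unweighted volumes of $B_g(p,1)$ and $B_g(p,r_0)$ agree up to factors $C(n)$, so the desired inequality $\mathrm{Vol}_g\lc B_g(p,r_0)\rc\le C(n)\,\mathrm{Vol}_g\lc B_g(p,1)\rc$ does follow; with these clarifications the proof is complete.
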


In this paper, we focus on Ricci shrinkers with uniformly bounded curvature. This naturally leads to the consideration of the following moduli space:

\begin{defn}\label{defn-E}
Let $\mathcal E(n,A)$ denote the family of Ricci shrinkers $(M^n,g,f)$ satisfying the curvature bound
\begin{align*}
|\Rm| \le A.
\end{align*}
\end{defn}

We collect some elementary properties for Ricci shrinkers in this moduli space.

\begin{lem}
\label{L203}
Let $(M^n,g,f,p) \in \mathcal E(n,A)$ be a Ricci shrinker, and define $\Omega_M(s):=\{x \in M \mid f(x) \le s\}$. The following conclusions hold:
\begin{enumerate}[label=\textnormal{(\alph{*})}]
\item There exists a constant $L=L(n, A) \ge 1$ such that for any $s_2>s_1 \ge L$, the set $\Omega_M(s_2) \setminus \mathrm{int}(\Omega_M(s_1))$ is diffeomorphic to $\partial \Omega_M(s_2) \times [0,1]$. Moreover, $M$ is diffeomorphic to $\mathrm{int}(\Omega_M(s))$ for any $s \ge L$.

\item For any $l \ge 0$, there exists a constant $C_l=C_l(n, A)>0$ such that
\begin{align} \label{eq:curbound1}
|\na^l \Rm| \le C_l.
\end{align}

\item For any $l \ge 2$, there exists a constant $C_l=C_l(n, A)>0$ such that
\begin{align} \label{eq:fbound}
|\na^l f| \le C_l.
\end{align}
\end{enumerate}
\end{lem}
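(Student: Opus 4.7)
The plan is to handle (b) and (c) together via higher-derivative estimates and then dispatch (a) by Morse theory on $f$. Part (b) comes from Shi's estimates applied to the self-similar Ricci flow induced by the shrinker, and part (c) follows by repeatedly differentiating the shrinker equation, so the only genuinely independent argument is the topological claim in (a).

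For (b), set $g(t)=(1-t)\varphi_t^* g$ on $t\in(-\infty,1)$, where $\varphi_t$ is the diffeomorphism generated by $\na f/(1-t)$ with $\varphi_0=\mathrm{id}$; standard computation shows that $g(t)$ is a smooth self-similar Ricci flow, and scale invariance of $|\Rm|$ yields $|\Rm(g(t))|\le C(n,A)$ on $M\times[-1/2,0]$. Shi's derivative estimates then produce $|\na^l \Rm(g(0))|\le C_l(n,A)$ on $M$, which is \eqref{eq:curbound1}. For (c), differentiate the shrinker equation $\na^2 f=\tfrac12 g-\Rc$ and use $\na g\equiv 0$ to get $\na^{l+2} f=-\na^l \Rc$ for every $l\ge 1$, while $|\na^2 f|\le \tfrac12 + |\Rc|\le \tfrac12+C(n)A$ handles $l=2$. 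Combined with (b), these identities give \eqref{eq:fbound} for every $l\ge 2$.

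For (a), Lemma \ref{L201} makes $f$ proper, so each $\Omega_M(s)$ is compact. Since $|\Rm|\le A$ implies $R\le C(n)A$, the normalization \eqref{E101} yields $|\na f|^2 = f-R\ge f-C(n)A$, so choosing $L=L(n,A)$ large enough makes $|\na f|\ge 1$ on $\{f\ge L\}$. In particular the level sets $\partial\Omega_M(s)$ are smooth compact hypersurfaces for $s\ge L$, and the reparametrized gradient flow generated by $X=\na f/|\na f|^2$ satisfies $X(f)\equiv 1$, so its time-$\tau$ flow carries $\{f=s\}$ diffeomorphically to $\{f=s+\tau\}$ for all $\tau\ge 0$. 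Tracing out this flow from $\partial\Omega_M(s_2)$ backwards to $\partial\Omega_M(s_1)$ provides the diffeomorphism $\Omega_M(s_2)\setminus\mathrm{int}(\Omega_M(s_1))\cong \partial\Omega_M(s_2)\times[0,1]$. Because $|\na f|$ is bounded below away from zero and $f$ is proper, the forward flow lines starting on $\partial\Omega_M(s)$ exist for all positive time and sweep out $M\setminus\mathrm{int}(\Omega_M(s))$, giving a diffeomorphism $M\cong \mathrm{int}(\Omega_M(s))$.

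The main technical ingredient is Shi's estimate applied in the self-similar setting; all remaining steps are algebraic manipulation of the shrinker equation or standard noncritical-level Morse theory, so no step should present a serious obstacle.
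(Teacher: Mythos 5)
Your proposal is correct and follows essentially the same route as the paper: (a) via the identity $R+|\na f|^2=f$ together with the quadratic growth of $f$ (so $f$ has no critical points above a level $L(n,A)$, and the reparametrized gradient flow gives the product structure), (b) via the associated self-similar Ricci flow and Shi's derivative estimates, and (c) by differentiating the shrinker equation and invoking (b). The only difference is that you spell out the standard noncritical-level flow argument that the paper leaves implicit.
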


\begin{proof}
(a): By assumption, the scalar curvature satisfies $R \le C(n) A$. Thus, the conclusion follows directly from the identity \eqref{E101} and Lemma \ref{L201}.

(b): Recall that every Ricci shrinker is associated with a self-similar ancient solution to the Ricci flow. Specifically, let ${\psi^t}: M \to M$ be a family of diffeomorphisms generated by $(1-t)^{-1}\nabla f$, with $\psi^{0}=\text{id}$. The rescaled pull-back metric $g(t)\coloneqq (1-t) (\psi^t)^*g$ satisfies the Ricci flow equation for any $t \in (-\infty ,1)$. Therefore, the conclusion follows from Shi’s curvature estimates \cite{Shi89A}.

(c): This follows immediately from (b) and the Ricci shrinker equation \eqref{E100}.
\end{proof}

\begin{prop} \label{prop:smooth}
Let $\mathcal E_B(n,A)$ be the subspace of $\mathcal E(n,A)$  where the entropy is bounded below by $-B$. Then, $\mathcal E_B(n,A)$ is compact in the pointed $C^{\infty}$ topology in the sense of Cheeger-Gromov.
\end{prop}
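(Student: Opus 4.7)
\medskip

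\textbf{Proof proposal.} The strategy is the standard Cheeger–Gromov compactness package: the hypotheses of $\mathcal E_B(n,A)$ immediately supply (i) a uniform bound on $|\mathrm{Rm}|$ and all its covariant derivatives, (ii) a uniform lower bound on $\mathrm{Vol}(B(p,1))$, and (iii) uniform $C^\infty_{\mathrm{loc}}$ control on the potential. These three ingredients give subsequential smooth convergence of $(M_i,g_i,p_i,f_i)$; the remaining issue is to verify that the limit still lies in $\mathcal E_B(n,A)$, i.e.\ that the entropy bound survives in the limit.

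First I would take an arbitrary sequence $(M_i^n,g_i,f_i,p_i)\in\mathcal E_B(n,A)$. By Lemma \ref{L202}, the entropy bound $\mmu(g_i)\ge -B$ yields
\[
\mathrm{Vol}_{g_i}\bigl(B_{g_i}(p_i,1)\bigr)\;\ge\;C(n)^{-1}e^{-B},
\]
while Lemma \ref{L203}(b) furnishes uniform bounds $|\nabla^l\mathrm{Rm}_{g_i}|\le C_l(n,A)$ for every $l\ge 0$. The classical Cheeger–Gromov compactness theorem therefore produces a subsequence (still indexed by $i$), a complete pointed Riemannian manifold $(M_\infty^n,g_\infty,p_\infty)$, an exhaustion $\Omega_1\subset\Omega_2\subset\cdots\subset M_\infty$, and diffeomorphisms $\phi_i\colon\Omega_i\to\phi_i(\Omega_i)\subset M_i$ with $\phi_i(p_\infty)=p_i$ such that $\phi_i^*g_i\to g_\infty$ in $C^\infty_{\mathrm{loc}}$.

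Next I would pass the potential to the limit. Lemma \ref{L201} gives the locally uniform upper bound $f_i\le \tfrac14(d_{g_i}(\cdot,p_i)+\sqrt{2n})^2$, the normalization \eqref{E101} gives $|\nabla f_i|^2\le f_i$, and Lemma \ref{L203}(c) provides $|\nabla^l f_i|\le C_l(n,A)$ for $l\ge 2$. Hence the pulled-back functions $\phi_i^*f_i$ are uniformly bounded in $C^\infty$ on each $\Omega_j$, so after a diagonal extraction $\phi_i^*f_i\to f_\infty$ in $C^\infty_{\mathrm{loc}}(M_\infty)$. Passing \eqref{E100} and \eqref{E101} to the limit shows that $(M_\infty,g_\infty,f_\infty,p_\infty)$ is a Ricci shrinker with $|\mathrm{Rm}_{g_\infty}|\le A$ and $p_\infty$ a minimum of $f_\infty$.

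The one non-routine step is to verify $\mmu(g_\infty)\ge -B$, since entropy is a global integral. Here I would exploit the lower quadratic growth bound in Lemma \ref{L201}, $f_i(x)\ge \tfrac14(d_{g_i}(x,p_i)-5n)_+^2$, which holds uniformly in $i$. Together with the uniform volume growth coming from $|\mathrm{Rm}|\le A$, this yields a uniform tail estimate
\[
\int_{M_i\setminus B_{g_i}(p_i,R)}\frac{e^{-f_i}}{(4\pi)^{n/2}}\,dV_{g_i}\;\le\;\varepsilon(R),\qquad \varepsilon(R)\to 0 \text{ as } R\to\infty,
\]
that is independent of $i$. On each fixed ball the smooth convergence of $(g_i,f_i)$ allows us to pass the truncated integral to its limit, and combining this with the tail bound gives
\[
\int_{M_\infty}\frac{e^{-f_\infty}}{(4\pi)^{n/2}}\,dV_{g_\infty}\;=\;\lim_{i\to\infty}\int_{M_i}\frac{e^{-f_i}}{(4\pi)^{n/2}}\,dV_{g_i},
\]
so $\mmu(g_\infty)=\lim_i\mmu(g_i)\ge -B$ and the limit lies in $\mathcal E_B(n,A)$. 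The main (and really the only) obstacle is this last step: one must be a little careful to get the tail estimate uniformly, but the explicit quadratic growth of $f_i$ from Lemma \ref{L201} makes it straightforward.
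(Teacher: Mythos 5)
Your argument is correct, and it reaches the same two milestones as the paper — subsequential pointed $C^\infty$ convergence, then persistence of the entropy bound in the limit — but it does so self-containedly, whereas the paper simply cites \cite[Theorem 1]{LLW21} for the smooth convergence and \cite[Proposition 8.8]{LLW21} for the fact that $(M_\infty,g_\infty,f_\infty)$ still has entropy $\ge -B$. Concretely, you exploit the bounded-curvature hypothesis to run the classical route: Lemma \ref{L202} gives the unit-ball volume bound at $p_i$, which propagates to all points at bounded distance by Bishop--Gromov, so Cheeger--Gromov--Taylor plus the Shi-type bounds of Lemma \ref{L203}(b) yield Cheeger--Gromov compactness, and the $C^\infty_{\mathrm{loc}}$ bounds on $f_i$ from Lemmas \ref{L201}, \ref{L203}(c) and \eqref{E101} let you pass the shrinker equations to the limit; then the uniform Gaussian decay $f_i\ge \tfrac14(d_i-5n)_+^2$ against the (at worst exponential) Bishop--Gromov volume growth gives the uniform tail estimate, so the entropy integral converges and $\mmu(g_\infty)\ge -B$. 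The cited results in \cite{LLW21} are proved under the much weaker hypothesis of only an entropy lower bound (no curvature bound), so the paper's proof buys brevity and generality of the invoked machinery, while your version buys a transparent, elementary argument tailored to $\mathcal E_B(n,A)$; both are valid. One small remark: your claim that $p_\infty$ is a minimum of $f_\infty$ is neither immediate (you only get $\nabla f_\infty(p_\infty)=0$ from $\nabla f_i(p_i)=0$) nor needed, since membership in $\mathcal E_B(n,A)$ only requires the shrinker equation with normalization \eqref{E101}, the bound $|\Rm|\le A$, and $\mmu\ge -B$; similarly, "$\mmu(g_\infty)=\lim_i\mmu(g_i)$" should be read along a further subsequence or replaced by a liminf, which is all you use.
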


\begin{proof}
Let $(M_i, g_i, f_i, p_i) \in \mathcal E_B(n,A)$. By Lemma \ref{L203} (b) and \cite[Theorem 1]{LLW21}, we can take a subsequence if necessary such that
\begin{align*}
	(M_i, g_i,f_i ,p_i) \longright{\text{pointed }{C}^{\infty}} (M_{\infty}, g_{\infty}, f_{\infty},p_{\infty}),
\end{align*}
where $(M_{\infty}, g_{\infty}, f_{\infty})$ is a Ricci shrinker.  Moreover, by smooth convergence and \cite[Proposition 8.8]{LLW21}, it follows that $(M_{\infty}, g_{\infty}, f_{\infty}) \in \mathcal E_B(n,A)$.
\end{proof}

In particular, combining Proposition \ref{prop:smooth} with Lemma \ref{L203} (a), we conclude that $\mathcal E_B(n,A)$ contains only finitely many diffeomorphism types.

\section{Collapsing of Ricci shrinkers with bounded curvature}

In this section, we discuss the behavior of a sequence of Ricci shrinkers collapsing under uniformly bounded sectional curvature.

To begin with, we first recall the following concept:

\begin{defn}[Frame bundle]
Let $(M^n, g)$ be an orientable Riemannian manifold. Its oriented orthogonal frame bundle $FM$ is equipped with a canonical metric $\tilde g$, defined as follows: the Levi-Civita connection of $g$ determines the horizontal distribution, while each fiber is endowed with the standard bi-invariant metric on $\mathrm{SO}(n)$. In particular, the projection $\rho: (FM, \tilde g) \to (M, g)$ is a Riemannian submersion.
\end{defn}

For an orientable Ricci shrinker $(M^n,g,f,p)$, we assign a base point $\tilde p$ in its frame bundle $FM$ such that $\rho(\tilde p)=p$. Furthermore, the function $f$ can be lifted to a smooth, $\mathrm{SO}(n)$-invariant function $\tilde f$ on $FM$. Naturally, $\tilde \Omega_M(s):=\{x \in FM \mid \tilde f(x) \le s\}$ corresponds to the frame bundle of $\Omega_M(s)=\{x \in M \mid f(x) \le s\}$.

The following lemma is a direct consequence of Lemma \ref{L203} (b) and the celebrated O'Neill formula \cite{One66}. Recall that $\mathcal{E}(n,A)$ is defined in Definition \ref{defn-E}.

\begin{lem}
\label{L204}
Let $(M^n,g,f,p) \in \mathcal E(n,A)$ be an orientable Ricci shrinker. For any $l \ge 0$, there exists a constant $C_l=C_l(n, A)>0$ such that on $(FM, \tilde g)$,
\begin{align*}
|\na_{\tilde g}^l \Rm(\tilde g)| \le C_l.
\end{align*}
\end{lem}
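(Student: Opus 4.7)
The plan is to deduce this from Lemma \ref{L203} (b) using the fact that the projection $\rho\colon (FM,\tilde g)\to (M,g)$ is a Riemannian submersion of a very special kind: a principal $\so$-bundle equipped with the connection metric, where the horizontal distribution is the Levi-Civita connection and the fibers carry a fixed bi-invariant metric on $\so$.

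First I would recall the O'Neill formulas expressing $\Rm(\tilde g)$ in terms of (i) the pullback of $\Rm(g)$ along $\rho$ on horizontal vectors, (ii) the intrinsic curvature of the fibers, and (iii) the O'Neill $A$- and $T$-tensors. In our setting $T\equiv 0$ since the fibers are totally geodesic (the structure group acts by isometries), and the intrinsic fiber curvature is a universal constant depending only on $n$ because the fibers are isometric copies of $(\so,\text{bi-invariant})$. The $A$-tensor can be identified, up to a universal algebraic operation, with the curvature form of the Levi-Civita connection, which in turn is the pullback of $\Rm(g)$. Consequently, pointwise on $FM$,
\begin{equation*}
|\Rm(\tilde g)|_{\tilde g} \le C(n)\bigl(1+|\rho^{*}\Rm(g)|_{\tilde g}\bigr) \le C(n)(1+A),
\end{equation*}
which handles $l=0$.

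Next I would control the covariant derivatives inductively. Let $\{E_\alpha\}$ be a local frame on $FM$ split into horizontal lifts $\{H_a\}$ of an orthonormal frame on $M$ and a fixed orthonormal basis $\{V_\mu\}$ of vertical Killing vector fields (generated by the right $\so$-action). Two features make the iteration tractable. A horizontal covariant derivative $\tilde \nabla_{H_a}$ applied to the pullback of a tensor $S$ from $M$ equals the horizontal lift of $\nabla_a S$ up to $A$-tensor correction terms, all of which are polynomial expressions in $\Rm(g)$ and its iterated covariant derivatives; and a vertical derivative $\tilde \nabla_{V_\mu}$ of a tensor that is basic (horizontally pulled back) produces only $A$-tensor type terms, since such tensors are $\so$-invariant. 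Meanwhile derivatives of $V_\mu$ and of the bi-invariant fiber metric are again universal in $n$. Iterating, each $\tilde\nabla^l \Rm(\tilde g)$ becomes a universal polynomial expression in $\rho^{*}\Rm(g),\,\rho^{*}\nabla\Rm(g),\ldots,\rho^{*}\nabla^{l}\Rm(g)$ and in curvature constants of $\so$. Using the bound $|\nabla^{j}\Rm(g)| \le C_j(n,A)$ from Lemma \ref{L203} (b) and the fact that $\rho$ is a Riemannian submersion so pullbacks preserve pointwise norms on horizontal components, we conclude $|\tilde \nabla^l \Rm(\tilde g)|_{\tilde g} \le C_l(n,A)$.

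The main obstacle, and the only place where care is required, is bookkeeping in the inductive step: one must verify that each new derivative introduces only terms of the listed types (pullbacks of $\nabla^{j}\Rm(g)$, $j\le l$, combined via universal algebraic operations) and no uncontrolled horizontal–vertical mixed terms. This is handled by systematically using that the horizontal distribution is $\so$-invariant (so $[V_\mu,H_a]$ is horizontal), that $[H_a,H_b]^{\mathrm{vertical}}$ is (twice) the $A$-tensor and hence proportional to $\rho^{*}\Rm(g)$-contractions, and that the bi-invariant metric on $\so$ is parallel. Once the structural formula for $\tilde\nabla^l\Rm(\tilde g)$ is obtained, the bound is immediate from Lemma \ref{L203} (b).
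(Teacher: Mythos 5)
Your proposal is correct and follows essentially the same route as the paper, which simply invokes Lemma \ref{L203} (b) together with O'Neill's submersion formulas for the connection metric on $FM$ (totally geodesic $\so$-fibers, $A$-tensor identified with $\rho^{*}\Rm(g)$, higher derivatives handled by the same algebraic bookkeeping you describe). Your write-up just makes explicit the inductive derivative count that the paper leaves implicit.
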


For the remainder of this section, we fix a sequence of orientable Ricci shrinkers $(M_i, g_i, f_i, p_i) \in \mathcal E(n, A)$ with entropy $\mmu(g_i) \to -\infty$. 

Using the standard precompactness theory \cite{BBI01} (see also \cite[Theorem 1.1]{LLW21}), we obtain the following result:

\begin{prop}
By taking a subsequence, we have
\begin{align} \label{eq:conv1}
	(M_i, g_i,f_i ,p_i) \longright{\text{pointed Gromov-Hausdorff}} (X, d_X, f_X,p_X),
\end{align}
where $(X, d_X)$ is a geodesic space, and $f_X$ is a locally Lipschitz function on $X$.
\end{prop}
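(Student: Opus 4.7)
The plan is to combine Gromov's precompactness theorem with an Arzelà-Ascoli argument for the potential functions, decomposing the proof into metric convergence and functional convergence.

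First, I would establish the pointed Gromov-Hausdorff convergence of $(M_i, g_i, p_i)$. The curvature bound $|\Rm(g_i)| \le A$ gives the uniform Ricci lower bound $\Rc(g_i) \ge -(n-1)A\, g_i$, so Bishop-Gromov volume comparison provides uniform doubling on every ball $B_{g_i}(p_i, r)$ and hence uniform total boundedness. Gromov's precompactness theorem then yields a subsequence converging in the pointed Gromov-Hausdorff sense to a complete length space $(X, d_X, p_X)$. Since each $(M_i, g_i)$ is a complete geodesic space with compact closed balls (by Hopf-Rinow, using the sectional curvature bound), the limit is in fact a complete geodesic space.

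Next, I would derive uniform Lipschitz estimates for the potentials. The normalization \eqref{E101} together with $R_i \ge 0$ gives $|\na f_i|^2 = f_i - R_i \le f_i$ pointwise. Combined with the quadratic upper bound from Lemma \ref{L201}, this yields
\begin{align*}
|\na f_i|(x) \;\le\; \tfrac{1}{2}\bigl(d_{g_i}(x, p_i) + \sqrt{2n}\bigr),
\end{align*}
so $f_i$ is uniformly Lipschitz on each metric ball $B_{g_i}(p_i, r)$ with a constant depending only on $n$ and $r$. Lemma \ref{L201} also gives $f_i(p_i) \le n/2$ (applied at $x = p_i$), so $\{f_i\}$ is uniformly bounded on $B_{g_i}(p_i, r)$ for every fixed $r$.

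Finally, I would pass to a limit function via Arzelà-Ascoli. Realizing the pointed GH convergence through $\ep_i$-approximations $\varphi_i: B_{g_i}(p_i, r) \to X$ (or by simultaneously embedding the balls into a common ambient metric space together with the limit ball), the pullback family $\{f_i \circ \varphi_i^{-1}\}$ is equicontinuous and equibounded on $B_{d_X}(p_X, r)$. A diagonal extraction over an exhausting sequence $r_k \to \infty$ then produces, after passing to a further subsequence, a limit function $f_X: X \to \R$ to which the $f_i$ converge uniformly on bounded sets in the pointed GH sense; the pointwise Lipschitz bound transfers to $f_X$, so $f_X$ is locally Lipschitz. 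I expect the only mildly delicate point to be the formal setup of the Arzelà-Ascoli argument across varying ambient spaces, which is routine once the equicontinuity and equiboundedness of $\{f_i\}$ have been established and is precisely what \cite[Theorem 1.1]{LLW21} packages.
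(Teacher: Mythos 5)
Your proposal is correct and is essentially the argument the paper invokes: the paper gives no independent proof, simply citing standard precompactness theory \cite{BBI01} and \cite[Theorem 1.1]{LLW21}, and your write-up (Gromov precompactness from the induced Ricci lower bound, uniform local Lipschitz bounds on $f_i$ via $|\na f_i|^2\le f_i$ together with Lemma \ref{L201}, then Arzel\`a-Ascoli with a diagonal extraction) is exactly the standard argument those references package. No gaps; the only quibble is that compactness of closed balls follows from completeness alone via Hopf-Rinow, without needing the curvature bound.
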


By our assumption on the entropy, the Hausdorff dimension of $X$ is strictly less than $n$. Furthermore, by the curvature bound, $(X, d_X)$ is an Alexandrov space with curvature bounded below by $-A$.

The local structure around a point $x \in X$ can be described as follows; see \cite{Fu88}, \cite{Fu90} and \cite{Rong10}. Take a sequence $x_i \in M_i$ with $x_i \to x$. Fix a small ball $B(0,\ep) \subset \R^n \simeq T_{x_i} M_i$, where $\ep$ is a fixed constant smaller than $\pi/\sqrt A$. Consider the pull back metric $\exp_{x_i}^* g_i$, denoted simply by $g_i$ for convenience. Define the local fundamental group $G_i$ as
\begin{align*}
G_i=\{\gamma \mid \gamma \text{ is a geodesic loop at } x_i \text{ with length smaller than } \ep\}.
\end{align*}
Then, we have the equivariant convergence
\begin{align*}
	\lc B(0, \ep), g_i,G_i \rc \longright{\text{equivariant\ } C^{\infty} } \lc B(0, \ep) ,g, G_\infty \rc.
\end{align*}
The definition of equivariant convergence can be found in \cite[Definition 6.11]{Fu90} or \cite[Definition 1.6.6]{Rong10}. Here, $G_\infty$ is a local Lie group acting smoothly on $B(0, \ep)$, and its Lie algebra $\mathfrak g$ is nilpotent. 

The following isometry holds:
\begin{align*}
\lc B(x, \ep), d_X \rc=\lc B(0, \ep), g \rc/G_\infty. 
\end{align*}
The isotropy group at $0$, denoted by $I_x$, is a Lie group which is a finite extension of a torus. By the standard slice theorem, and by choosing a smaller $\ep$ if necessary, there exists a slice $S=\exp_0(B')$, where $B'$ is a small ball in $\lc T_0(G_\infty \cdot 0) \rc^{\perp}$, such that $B(x, \ep)$ is isometric to $S/I_x$.

We now consider the associated frame bundles of this sequence. The following result follows from Lemma \ref{L204} and \cite{Fu88}.

\begin{prop}\label{prop:frame}
By taking a subsequence, we have
\begin{align} \label{eq:conv2}
	(FM_i, \tilde g_i, \mathrm{SO}(n),\tilde f_i ,\tilde p_i) \longright{\text{equivariant pointed Gromov-Hausdorff}} (Y, g_Y,\mathrm{SO}(n), f_Y,p_Y),
\end{align}
where $(Y,g_Y)$ is a smooth Riemannian manifold. Furthermore, $(Y,g_Y)/\mathrm{SO}(n)$ is isometric to $(X,d_X)$.
\end{prop}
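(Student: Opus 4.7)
The plan is to apply Fukaya's frame-bundle trick, which converts a collapsing problem on $(M_i,g_i)$ into a non-collapsing problem on $(FM_i,\tilde g_i)$ where Cheeger-Gromov compactness directly applies. The key input is a uniform positive lower bound on the injectivity radius of $(FM_i,\tilde g_i)$ at $\tilde p_i$: although $(M_i,g_i)$ may collapse, the free and isometric $\so$-action on the bundle prevents the total space from collapsing, because any short geodesic loop at $\tilde p_i$ either projects to a short loop at $p_i$ (controlled by $\so$-orbit structure and curvature) or lies transverse to the orbits and thus produces short loops on the base with controlled geometry. Combined with the uniform bound $|\Rm(\tilde g_i)|\le C(n,A)$ from Lemma \ref{L204}, this non-collapsing assertion is exactly what is extracted from Fukaya's analysis in \cite{Fu88}.

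Given the injectivity radius lower bound and the higher-order curvature bounds $|\na^l \Rm(\tilde g_i)|\le C_l$ from Lemma \ref{L204}, Cheeger-Gromov compactness yields, after passing to a subsequence, smooth pointed convergence $(FM_i,\tilde g_i,\tilde p_i)\to (Y,g_Y,p_Y)$ to a smooth Riemannian manifold. Since the realizing diffeomorphisms can be chosen to intertwine the isometric actions of the fixed compact group $\so$, a further subsequence produces a limiting isometric $\so$-action on $(Y,g_Y)$, upgrading the convergence to equivariant pointed Gromov-Hausdorff convergence. The lifted potentials $\tilde f_i$ are $\so$-invariant and, by the bounds in Lemmas \ref{L201} and \ref{L203}(c) transferred through the Riemannian submersion $\rho_i\colon FM_i\to M_i$, satisfy uniform estimates $|\na_{\tilde g_i}^l \tilde f_i|\le C_l$ on $\tilde\Omega_{M_i}(s)$ for each fixed $s$; Arzel\`a-Ascoli then produces a smooth, $\so$-invariant limit $f_Y$ on $Y$.

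Finally, taking $\so$-quotients in the identity $(FM_i,\tilde g_i)/\so=(M_i,g_i)$ and combining with the convergence \eqref{eq:conv1} on the base with the bundle convergence above identifies $(Y,g_Y)/\so$ isometrically with $(X,d_X)$, by continuity of the quotient operation under equivariant pointed Gromov-Hausdorff convergence; similarly $f_Y$ descends to $f_X$. The only genuinely hard step is the injectivity-radius lower bound on $(FM_i,\tilde g_i)$, which relies on Fukaya's specific analysis of frame bundles over collapsing manifolds with bounded curvature; once this is in hand, the remaining assertions follow from a standard package of compactness and equivariance arguments.
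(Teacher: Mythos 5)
Your central claim --- a uniform positive lower bound on the injectivity radius of $(FM_i,\tilde g_i)$ at $\tilde p_i$ --- is false, and it is not what Fukaya proves in \cite{Fu88}. The frame bundle collapses whenever the base does: for a sequence of flat tori with injectivity radius tending to zero, the frame bundle is isometrically a product $T^n_i\times\so$ and collapses as well (the frame bundle only ``unwraps'' collapse carried by nontrivial holonomy, not collapse along directions with trivial holonomy). In the setting of this paper the failure is forced: since $\mmu(g_i)\to-\infty$, the limit $X$ has $\dim X<n$, and because the principal isotropy of the $\so$-action on $Y$ is finite, $\dim Y=\dim X+\dim \so<\dim FM_i$; a uniform injectivity radius bound together with Cheeger--Gromov compactness would instead give a limit of full dimension $\dim FM_i$, a contradiction. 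Consequently the package you build on this claim --- non-collapsed smooth Cheeger--Gromov convergence of the total spaces, Arzel\`a--Ascoli for $\tilde f_i$ on them, and the resulting smooth $\so$-action and smooth limit $f_Y$ --- does not get off the ground.

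What \cite{Fu88} actually provides (and what the paper invokes, together with the bounds $|\na^l_{\tilde g_i}\Rm(\tilde g_i)|\le C_l$ of Lemma \ref{L204}) is different: although $(FM_i,\tilde g_i)$ collapses in general, its equivariant pointed Gromov--Hausdorff limit $Y$ is a smooth Riemannian manifold because the local (pseudo)group actions appearing in the limit act \emph{freely} at the frame-bundle level --- an isometry of a local cover fixing a point together with a frame at that point is the identity --- so the limit is a quotient of a smooth local model by a free action rather than by one with fixed points. The identification $(Y,g_Y)/\so\cong(X,d_X)$ is likewise part of Fukaya's equivariant statement (compatibility of limits with $\so$-quotients), not a separate ``continuity of the quotient'' argument built on non-collapsing. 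Note also that smoothness of $f_Y$ is not asserted in this proposition; the paper establishes it later, in Proposition \ref{prop:po1}, by passing to local covers $B(0,\ep)\subset T_{y_i}(FM_i)$ where the convergence \emph{is} smooth and using invariance under the limit local group --- precisely because there is no global non-collapsed convergence of the frame bundles to exploit.
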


Let $\rho: Y \to Y/\mathrm{SO}(n)=X$ be the projection. For any $y \in Y$, the isotropy group of $\mathrm{SO}(n)$ at $y$ is isomorphic to $I_{\rho(y)}$; see \cite[Theorem 1.6]{Fu88}. Additionally, $f_Y$ is the pullback of $f_X$ under the projection $\rho $ and is therefore $\mathrm{SO}(n)$-invariant. In fact, we have the following stronger result.

\begin{prop} \label{prop:po1}
For the limit function $f_Y$ in \eqref{eq:conv2}, the following properties hold:
\begin{enumerate}[label=\textnormal{(\alph{*})}]
\item For any $y \in Y$,
\begin{align} \label{eq:id00}
\frac{1}{4} \lc d_{g_Y}(y,p_Y)-C(n) \rc^2_+ \le f_Y(y) \le \frac{1}{4} \lc d_{g_Y}(y,p_Y)+C(n) \rc^2.
\end{align}

\item $f_Y$ is smooth and satisfies
\begin{align} \label{eq:id0}
0\le f_Y-|\na f_Y|^2 \le C(n)A.
\end{align}
\end{enumerate}
\end{prop}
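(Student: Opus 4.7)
The plan is to derive both statements by lifting known properties of $(M_i, g_i, f_i, p_i)$ to the frame bundles and then taking the smooth limit given by \eqref{eq:conv2}. First I note that the convergence in \eqref{eq:conv2} can be upgraded to smooth Cheeger--Gromov convergence of the triples $(FM_i, \tilde g_i, \tilde f_i)$ toward $(Y, g_Y, f_Y)$ on compact subsets: by Lemma \ref{L204} the bundle metrics have uniformly bounded curvature to all orders, and by Lemma \ref{L203}(c) together with the submersion $\rho_i : FM_i \to M_i$, the lifted potentials $\tilde f_i = f_i \circ \rho_i$ satisfy uniform $C^{l}$ bounds for every $l \ge 2$; moreover $|\na_{\tilde g_i}\tilde f_i|^2 = |\na f_i|^2 \circ \rho_i \le C(n) A$ by Lemma \ref{L201} combined with \eqref{E101} and the scalar curvature bound. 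Hence by Fukaya's equivariant smoothing, the limit $f_Y$ is a smooth $\mathrm{SO}(n)$-invariant function and all derivatives pass to the limit.

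For (a), recall that $\rho_i : (FM_i, \tilde g_i) \to (M_i, g_i)$ is a Riemannian submersion with compact fiber isometric to $\mathrm{SO}(n)$, whose diameter is a constant $D(n)$. Thus for any $\tilde x \in FM_i$ with $\rho_i(\tilde x) = x$,
\begin{equation*}
d_{g_i}(x, p_i) \;\le\; d_{\tilde g_i}(\tilde x, \tilde p_i) \;\le\; d_{g_i}(x, p_i) + D(n).
\end{equation*}
Combining this with the quadratic growth estimate of Lemma \ref{L201} applied to $f_i$ and the identity $\tilde f_i(\tilde x) = f_i(x)$ yields, after adjusting the constant,
\begin{equation*}
\tfrac{1}{4}\bigl(d_{\tilde g_i}(\tilde x, \tilde p_i) - C(n)\bigr)_+^2 \le \tilde f_i(\tilde x) \le \tfrac{1}{4}\bigl(d_{\tilde g_i}(\tilde x, \tilde p_i) + C(n)\bigr)^2.
\end{equation*}
Passing to the smooth limit in \eqref{eq:conv2} gives \eqref{eq:id00}.

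For (b), the smoothness of $f_Y$ was established above. It remains to prove the two-sided bound. On each $M_i$, the Ricci shrinker identity \eqref{E101} gives $f_i - |\na f_i|^2 = R_i$, where by \cite[Corollary 2.5]{CBL07} we have $R_i \ge 0$, and by $|\Rm(g_i)| \le A$ we have $R_i \le C(n) A$. Pulling back through the Riemannian submersion preserves $\tilde f_i = f_i \circ \rho_i$ and $|\na_{\tilde g_i}\tilde f_i|^2 = |\na f_i|^2 \circ \rho_i$, so
\begin{equation*}
0 \;\le\; \tilde f_i - |\na_{\tilde g_i}\tilde f_i|^2 \;=\; R_i \circ \rho_i \;\le\; C(n) A
\end{equation*}
uniformly on $FM_i$. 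Since the smooth convergence $(\tilde g_i, \tilde f_i) \to (g_Y, f_Y)$ preserves pointwise values of $f_Y$ and of $|\na f_Y|_{g_Y}^2$ on compact subsets of $Y$, the inequality \eqref{eq:id0} follows by taking $i \to \infty$.

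The main obstacle is to justify that the equivariant Gromov--Hausdorff convergence in \eqref{eq:conv2} may indeed be promoted to smooth convergence of the full data $(\tilde g_i, \tilde f_i)$, which is what allows the identities to pass to the limit; but this is standard given the uniform higher-derivative bounds on both $\Rm(\tilde g_i)$ and $\tilde f_i$ provided by Lemmas \ref{L203} and \ref{L204}.
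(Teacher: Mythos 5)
Your part (a) is fine and is essentially the paper's argument: the fiber-diameter bound gives the two-sided comparison between $d_{\tilde g_i}(\tilde x,\tilde p_i)$ and $d_{g_i}(x,p_i)$, and the quadratic growth of Lemma \ref{L201} then passes to the Gromov--Hausdorff limit, since \eqref{eq:id00} only involves distances and values of $\tilde f_i$. The problem is in your treatment of (b). Your key claim --- that the equivariant pointed Gromov--Hausdorff convergence \eqref{eq:conv2} ``can be upgraded to smooth Cheeger--Gromov convergence of the triples $(FM_i,\tilde g_i,\tilde f_i)$ toward $(Y,g_Y,f_Y)$ on compact subsets'' --- is false in the setting at hand. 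The whole point of this section is that the sequence collapses ($\mmu(g_i)\to-\infty$), and passing to the frame bundle does not remove the collapse: by Theorem \ref{thm:fib} the sets $\tilde U_i(s)\subset FM_i$ fiber over $\tilde\Omega(s)\subset Y$ with fibers of diameter $\ep_i\to 0$, so in general $\dim Y<\dim FM_i$ and the injectivity radius of $\tilde g_i$ tends to zero. Uniform bounds on $|\na^l\Rm(\tilde g_i)|$ and on $|\na^l\tilde f_i|$ cannot prevent this; they only guarantee precompactness in the (equivariant) Gromov--Hausdorff sense. Consequently there are no diffeomorphisms from compact subsets of $Y$ into $FM_i$ along which $\tilde f_i$ and $|\na_{\tilde g_i}\tilde f_i|^2$ could converge pointwise, and both the smoothness of $f_Y$ and the passage of the inequality $0\le\tilde f_i-|\na_{\tilde g_i}\tilde f_i|^2\le C(n)A$ to the limit are left unjustified. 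The phrase ``Fukaya's equivariant smoothing'' does not fill this gap: Fukaya's theory gives that $Y$ is a smooth manifold and that the convergence is equivariant Gromov--Hausdorff, not a smooth convergence of the functions $\tilde f_i$ on a fixed underlying space.

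The correct way to carry out your plan (and what the paper does) is local: for $y\in Y$ choose $y_i\in FM_i$ with $y_i\to y$, pull everything back by $\exp_{y_i}$ to a fixed ball $B(0,\ep)\subset\R^{n(n+1)/2}$, and use the equivariant $C^\infty$ convergence of $(B(0,\ep),\tilde g_i,G_i)$ to $(B(0,\ep),g_\infty,G_\infty)$, where $G_i$ is the local fundamental pseudogroup. On these local covers there is no collapse, so the lifts of $\tilde f_i$ (controlled by \eqref{eq:fbound}) converge in $C^\infty$ to a $G_\infty$-invariant function $f_\infty$, the inequality $0\le f_\infty-|\na_{g_\infty}f_\infty|^2\le C(n)A$ survives the limit, and since $G_\infty$ acts freely and isometrically (this is where the frame bundle is used), $f_Y$ is locally the smooth quotient of $f_\infty$ on $B(0,\ep)/G_\infty$ and inherits \eqref{eq:id0}. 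Your computation $\tilde f_i-|\na_{\tilde g_i}\tilde f_i|^2=R_i\circ\rho_i\in[0,C(n)A]$ is correct and is exactly the input needed; what is missing is this local unwrapping step that legitimizes taking the limit of that identity on $Y$.
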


\begin{proof}
(a): Since the diameter of each $\mathrm{SO}(n)$-fiber of $FM_i$ is uniformly bounded by $C(n)$, we have
\begin{align*}
d_{g_i} (p_i, x)-C(n) \le d_{\tilde g_i} (\tilde p_i, \tilde x) \le d_{g_i} (p_i, x)+C(n)
\end{align*}
for any $x \in M_i$ and any lift $\tilde x$ of $x$. Therefore, \eqref{eq:id00} follows from Lemma \ref{L201} by taking the limit.

(b): For any $y \in Y$, we take a sequence $y_i \in FM_i$ with $y_i \to y$. Fix a ball $B(0, \ep) \subset \R^{\frac{n(n+1)}{2}}=T_{y_i}(FM_i)$. As before, we have smooth convergence:
\begin{align*}
	\lc B(0, \ep), \tilde g_i,G_i \rc \longright{\text{equivariant } C^{\infty}} \lc B(0, \ep) ,g_{\infty}, G_{\infty} \rc,
\end{align*}
where $G_i$ is the local fundamental group. By \eqref{eq:fbound}, the lift of $\tilde f_i$ on $B(0,\ep)$, still denoted by $\tilde f_i$, converges smoothly to a function $f_{\infty}$. Since each $\tilde f_i$ is $G_i$-invariant, we conclude that $f_{\infty}$ is $G_{\infty}$-invariant. Therefore, $f_Y$ is smooth around $y$, as $f_Y$ on $B(y, \ep)$ is the quotient of $f_{\infty}$ on $B(0, \ep)/G_{\infty}$.

Additionally, from \eqref{E101} and the curvature bound, each $\tilde f_i$ satisfies
\begin{align*}
0 \le \tilde f_i-|\na_{\tilde g_i} \tilde f_i|^2 \le C(n) A.
\end{align*}
Taking the limit, we obtain
\begin{align*}
0 \le f_{\infty}-|\na_{g_{\infty}} f_{\infty}|^2 \le C(n) A.
\end{align*}
Since the action of $G_{\infty}$ on $B(0, \ep)$ is free and isometric, \eqref{eq:id0} follows.
\end{proof}

Note that the limit space $X$ can be identified as the orbit space of $(Y, \mathrm{SO}(n))$. Consequently, $X$ has a natural stratification by orbit types. In particular, there exists an open and dense set $\mathcal R$ of $X$, consisting of the projections of all principal orbits in $Y$, such that $\mathcal R$ admits a smooth manifold structure. Furthermore, the principal isotropy group is finite (see the proof of Proposition \ref{prop:topo4} (a) below).

The following strong characterization is given by \cite{NT08}.

\begin{prop} \label{prop:po2}
$X$ admits a smooth Riemannian orbifold structure away from a closed set $\mathcal S$ of Hausdorff dimension at most $\min\{n-5, \mathrm{dim}(X)-3\}$. Moreover, $f_X$ is smooth on $X \setminus\mathcal S$ in the orbifold sense.
\end{prop}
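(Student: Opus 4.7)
My plan is to derive Proposition \ref{prop:po2} by invoking the orbifold structure theorem of Naber--Tian \cite{NT08} and then promoting smoothness of $f_Y$ to orbifold smoothness of $f_X$ via the slice theorem for the $\so$-action on $Y$.

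First, since Lemma \ref{L203}(b) provides the uniform curvature bound $|\Rm(g_i)| \le A$ along our sequence of Ricci shrinkers, the pointed Gromov--Hausdorff convergence \eqref{eq:conv1} fits directly into the hypotheses of \cite{NT08}. Their main structure theorem then yields that $X$ carries a smooth Riemannian orbifold structure outside a closed subset $\mathcal{S}$ of Hausdorff dimension at most $\min\{n-5,\dim(X)-3\}$. The set $\mathcal{S}$ is precisely (the $\rho$-image of) the locus where the $\so$-isotropy on $Y$ has positive-dimensional identity component; the codimension estimates are obtained in \cite{NT08} via a careful stratification by orbit type together with the Cheeger--Fukaya--Gromov structure theory of nilpotent Killing structures on collapsed manifolds with bounded sectional curvature.

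Second, for the smoothness of $f_X$ on $X\setminus \mathcal{S}$: any point $x\in X\setminus\mathcal{S}$ lifts to some $y\in Y$ whose isotropy $I_y\subset\so$ is finite. By the slice theorem for isometric compact group actions, a neighborhood of $x$ in $X$ is isometric to $S/I_y$, where $S$ is a small geodesic slice at $y$ transverse to the orbit $\so\cdot y$, and $I_y$ acts linearly and faithfully on $S$. This gives a natural orbifold chart around $x$. Since $f_Y$ is smooth on $Y$ by Proposition \ref{prop:po1}(b) and $\so$-invariant by construction, its restriction to $S$ is a smooth $I_y$-invariant function whose descent to $S/I_y$ agrees with $f_X$; hence $f_X$ is smooth at $x$ in the orbifold sense.

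The main obstacle is of course the Hausdorff dimension bound on $\mathcal{S}$, but this is used entirely as a black box from \cite{NT08}. Everything else is a direct consequence of Proposition \ref{prop:po1}(b) and the standard slice theorem, so no further technical work is required in this paper.
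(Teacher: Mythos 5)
The first half of your argument (quoting \cite[Theorem 1.1]{NT08} for the orbifold structure and the dimension bound on $\mathcal S$) is exactly what the paper does. The gap is in your second step: you assert that every $x\in X\setminus\mathcal S$ lifts to a point $y\in Y$ whose isotropy group $I_y\subset\so$ is \emph{finite}, i.e.\ that $\mathcal S$ is precisely the image of the positive-dimensional isotropy locus. That identification is not what \cite{NT08} gives and is false in general: an orbifold point of $X$ may well lift to a point with positive-dimensional isotropy $H$. The model case is $H=S^1$ acting by rotation on a two-dimensional slice, whose quotient is $[0,\infty)\cong\R/\Z_2$ --- a perfectly good orbifold point, hence not in $\mathcal S$, yet with infinite isotropy upstairs. (Note also that the image of the positive-dimensional isotropy locus can have codimension as low as $1$ in $X$, which is incompatible with the bound $\dim\mathcal S\le \dim(X)-3$, so your description of $\mathcal S$ cannot be right.) Consequently your slice argument, which takes the full slice $S$ at $y$ with the finite group $I_y$ acting on it, only proves orbifold smoothness of $f_X$ at points of finite isotropy and says nothing at the remaining orbifold points.

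The missing ingredient is the \emph{local Riemannian reduction} from the proof of \cite[Theorem 1.1]{NT08} (see \cite[Definition 3.3]{NT08}), which the paper invokes: at $x\in X\setminus\mathcal S$ with lift $y$ and (possibly positive-dimensional) isotropy $H$, there exist a submanifold $S_y'\subset S_y$ of the slice and a \emph{finite} subgroup $\Gamma\le H$ acting on $S_y'$ such that $(S_y',\Gamma)$ is a Riemannian orbifold chart around $x$, with $S_y'/\Gamma$ isometric to $S_y/H$. Once this chart is in hand, your smoothness argument goes through verbatim with $S$ replaced by $S_y'$: since $f_Y$ is smooth and $\so$-invariant by Proposition \ref{prop:po1}, its restriction to $S_y'$ is a smooth $\Gamma$-invariant function descending to $f_X$, so $f_X$ is smooth at $x$ in the orbifold sense. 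In short, the slice theorem alone does not suffice off the finite-isotropy locus; you must cite (or reprove) the reduction statement from \cite{NT08} to produce finite orbifold charts at all points of $X\setminus\mathcal S$.
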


\begin{proof}
The first conclusion follows from \cite[Theorem 1.1]{NT08}. For any $x \in X \setminus\mathcal S$, let $y \in Y$ satisfy $\rho(y)=x$. Using the slice theorem, there exists a slice $S_y$ such that $S_y/H$, where $H$ is the isotropy group of $\mathrm{SO}(n)$ at $y$, represents a neighborhood of $x$. As shown in the proof of \cite[Theorem 1.1]{NT08}, there exists a local Riemannian reduction; see \cite[Definition 3.3]{NT08}. Specifically, there exists a submanifold $S_y' \subset S_y$ and a finite subgroup $\Gamma \le H$ acting on $S_y'$ such that $(S_y',\Gamma)$ forms a Riemannian orbifold chart around $x$. By Proposition \ref{prop:po1}, the restriction of $f_Y$ to $S_y'$ is smooth. Consequently, $f_X$ is smooth at $x$ in the orbifold sense.
\end{proof}

For later applications, we introduce the following auxiliary sets:

\begin{defn}
Under the above assumptions, we define:
\begin{align*}
\Omega_i(s):=\{x \in M_i \mid f_i(x) \le s\},& \quad \tilde \Omega_i(s):=\{x \in FM_i \mid \tilde f_i(x) \le s\}, \\
\Omega(s):=\{x \in X \mid f_X(x) \le s\}, &\quad \tilde \Omega(s):=\{x \in Y \mid f_Y(x) \le s\}.
\end{align*}
\end{defn}

We also need the following concept:

\begin{defn}\label{dfn:205}
Let $(M, G)$ and $(N, G)$ be two smooth manifolds with a Lie group $G$-action.
\begin{enumerate}[label=\textnormal{(\roman{*})}]
\item $(M, G)$ and $(N, G)$ are called \textbf{$G$-diffeomorphic} if there exists a $G$-equivariant diffeomorphism $\phi: M \to N$.

\item $(M, G)$ and $(N, G)$ are called \textbf{weakly $G$-diffeomorphic} if there exists a diffeomorphism $\phi: M \to N$ and an automorphism $a$ of $G$ such that for any $x \in M$ and $\xi \in G$, 
\begin{align*}
\phi(\xi x)=a(\xi ) \phi(x)
\end{align*}
\end{enumerate}
\end{defn}

\begin{lem} \label{lem:topo2}
There exists a constant $L=L(n, A)>0$ such that the following properties hold:
\begin{enumerate}[label=\textnormal{(\alph{*})}]
\item For any $s_2> s_1 \ge L$, the set $\tilde \Omega(s_2) \setminus \mathrm{int}( \tilde \Omega(s_1) )$ is $\mathrm{SO}(n)$-diffeomorphic to $\partial \tilde \Omega(s_1) \times [0,1]$, where the $\mathrm{SO}(n)$-action on $[0,1]$ is trivial. Moreover, $Y$ is $\mathrm{SO}(n)$-diffeomorphic to $\mathrm{int} (\tilde \Omega(s))$ for any $s \ge L$.

\item For any $s_2> s_1 \ge L$, the set $\Omega(s_2) \setminus \mathrm{int}(\Omega(s_1) )$ is homeomorphic to $\partial \Omega(s_1) \times [0,1]$. Moreover, $X$ is homeomorphic to $ \mathrm{int}(\Omega(s))$ for any $s \ge L$.
\end{enumerate}
\end{lem}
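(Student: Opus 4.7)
The plan is to mimic the Morse-theoretic argument of Lemma \ref{L203}(a) at the level of the limit frame bundle $Y$, using the smoothness and gradient lower bound on $f_Y$ provided by Proposition \ref{prop:po1}(b). Since $|\nabla f_Y|^2 \geq f_Y - C(n)A$, I would take $L := C(n)A + 1$, so that $|\nabla f_Y|^2 \geq 1$ on $\{f_Y \geq L\}$. Then every $s \geq L$ is a regular value of the smooth $\mathrm{SO}(n)$-invariant function $f_Y$, making $\partial \tilde{\Omega}(s) = f_Y^{-1}(s)$ a smooth closed $\mathrm{SO}(n)$-invariant hypersurface in $Y$. On $\{f_Y \geq L\}$ the vector field $V := \nabla f_Y / |\nabla f_Y|^2$ is smooth, $\mathrm{SO}(n)$-equivariant, and bounded in norm by $1$, so (using completeness of $Y$ as a pointed Gromov-Hausdorff limit of complete manifolds) its flow $\phi_t$ is forward-complete on that set and satisfies $f_Y(\phi_t(y)) = f_Y(y) + t$.

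For part (a), the map $(y,t) \mapsto \phi_{(s_2-s_1)t}(y)$ provides an $\mathrm{SO}(n)$-equivariant diffeomorphism
\[
\partial \tilde{\Omega}(s_1) \times [0,1] \longrightarrow \tilde{\Omega}(s_2) \setminus \mathrm{int}(\tilde{\Omega}(s_1)),
\]
with $\mathrm{SO}(n)$ acting trivially on $[0,1]$; this yields the first assertion. For the ``moreover'' part, given $s \geq L$ I would fix $L' \in (L,s)$ and a smooth diffeomorphism $\chi:[L',\infty) \to [L',s)$ equal to the identity in a neighborhood of $L'$, and define
\[
F(y) = \begin{cases} y, & f_Y(y) \leq L', \\ \phi_{\chi(f_Y(y)) - f_Y(y)}(y), & f_Y(y) \geq L'. \end{cases}
\]
The two branches agree on $\{f_Y = L'\}$ because $\chi$ equals the identity there, so $F$ is a well-defined smooth $\mathrm{SO}(n)$-equivariant map into $\mathrm{int}(\tilde{\Omega}(s))$; the analogous construction with $\chi^{-1}$ produces a two-sided inverse, so $F$ is the required $\mathrm{SO}(n)$-equivariant diffeomorphism.

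For part (b), I would pass to $\mathrm{SO}(n)$-quotients. Since $f_Y = f_X \circ \rho$ we have $\tilde{\Omega}(s) = \rho^{-1}(\Omega(s))$, so $\tilde{\Omega}(s)/\mathrm{SO}(n) = \Omega(s)$, with the analogous statements for interiors and boundaries. Because the diffeomorphisms in (a) are $\mathrm{SO}(n)$-equivariant with trivial action on the $[0,1]$-factor, they descend to continuous bijections between the orbit spaces; these descended maps are open (since $\rho$ is open) and hence homeomorphisms. The main technical point of the whole argument is packaged already by Proposition \ref{prop:po1}(b): the smoothness of $f_Y$ on $Y$ together with the gradient lower bound $|\nabla f_Y|^2 \geq f_Y - C(n)A$ is what ensures that $V$ is defined, equivariant, and has a complete flow beyond $\{f_Y \geq L\}$. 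Once this ingredient is in hand, the flow construction and its descent to $X$ are routine, and no significant obstacle remains.
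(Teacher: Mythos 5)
Your proposal is correct and follows essentially the same route as the paper: the gradient flow of the $\mathrm{SO}(n)$-invariant function $f_Y$, which has no critical points on $\{f_Y \ge L\}$ by Proposition \ref{prop:po1} (b), gives the equivariant product structure and the diffeomorphism $Y \cong \mathrm{int}(\tilde\Omega(s))$, and part (b) follows by descending these equivariant maps to the $\mathrm{SO}(n)$-quotients. You simply spell out the details (choice of $L = C(n)A+1$, the vector field $\nabla f_Y/|\nabla f_Y|^2$, and the reparametrization $\chi$) that the paper leaves as "the standard gradient flow method."
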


\begin{proof}
(a): By Proposition \ref{prop:po1}, $f_Y$ has no critical point when $f_Y \ge L(n, Y)$. Consequently, the conclusion follows from the standard gradient flow method applied to $f_Y$, noting that $f_Y$ is $\mathrm{SO}(n)$-invariant.

(b): The $\so$-diffeomorphisms established in part (a) naturally descend to the required homeomorphisms.
\end{proof}

The following fibration theorem is well-known; see \cite[Theorem 9.1]{Fu88}, \cite[Theorem 2.6]{CFG92}, \cite[Theorem 0-1]{Fu89} and \cite[Theorem 5.7.1]{Rong10}. We remark that $\tilde \Phi_i$ below depends on $s$.

\begin{thm}\label{thm:fib}
Given $s \ge L(n, A)$ and sufficiently large $i$, there exists a smooth $\mathrm{SO}(n)$-equivariant fibration $\tilde \Phi_i: \tilde U_i(s) \to \tilde \Omega(s)$, where $\tilde U_i (s)$ is an $\mathrm{SO}(n)$-invariant compact set satisfying $\tilde \Omega_i(s-1) \subset \tilde U_i(s) \subset \tilde \Omega_i(s+1)$, with the following properties:
\begin{enumerate}[label=\textnormal{(\alph{*})}]
\item $\tilde \Phi_i$ is an almost Riemannian submersion, meaning there exists a sequence $\ep_i \to 0$ such that for any horizontal vector $\zeta$, 
\begin{align*}
e^{-\ep_i} |\zeta| \le |\mathrm{d}\tilde \Phi_i(\zeta)| \le e^{\ep_i} |\zeta|.
\end{align*}

\item Each fiber of $\tilde \Phi_i$ is connected, with diameter bounded by $\ep_i$. Moreover, the second fundamental form of the fibers is uniformly bounded.

\item The fibers admit canonical flat connections that vary smoothly, and the $\mathrm{SO}(n)$-action preserves these flat connections. Furthermore, there exists a simply connected nilpotent group $N$ and a group of affine transformations $\Gamma$ of $N$ such that $\tilde\Phi_i^{-1}(y)$ is affine equivalent to $N/\Gamma$, with $[\Gamma: \Gamma\cap N]<\infty$. In particular, the structure group of $\tilde \Phi_i$ is contained in 
\begin{align*}
C(N)/\lc C(N) \cap \Gamma \rc \rtimes \mathrm{Aut}(\Gamma),
\end{align*}
where $C(N)$ is the center of $N$.
\end{enumerate}
\end{thm}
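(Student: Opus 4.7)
The plan is to apply the equivariant version of the Cheeger-Fukaya-Gromov fibration theorem to the sequence $(FM_i, \tilde g_i, \so)$. Two ingredients are already in place: by Lemma \ref{L204}, all covariant derivatives of $\Rm(\tilde g_i)$ are uniformly bounded in terms of $n$ and $A$; and by Proposition \ref{prop:frame}, the convergence to $(Y, g_Y, \so)$ takes place in the equivariant pointed Gromov-Hausdorff topology. These are precisely the hypotheses of the equivariant fibration theorem in \cite[Theorem 2.6]{CFG92}, \cite[Theorem 0-1]{Fu89}, and \cite[Theorem 5.7.1]{Rong10}, once we fix an $\so$-invariant compact target inside $Y$.

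Concretely, for $s \ge L(n,A)$, the sublevel set $\tilde \Omega(s)$ is compact by the quadratic growth \eqref{eq:id00}, and an $\ep_0$-neighborhood $K$ of $\tilde \Omega(s)$ in $Y$ remains compact and $\so$-invariant because $f_Y$ is $\so$-invariant. I would apply the equivariant fibration theorem on $K$: for all sufficiently large $i$ this yields an $\so$-invariant open set $\tilde V_i \subset FM_i$ together with a smooth $\so$-equivariant almost Riemannian submersion $\tilde \Phi_i: \tilde V_i \to K$ satisfying properties (a) and (b), whose fibers are affine nilmanifolds $N/\Gamma$ with structure group contained in $C(N)/(C(N) \cap \Gamma) \rtimes \mathrm{Aut}(\Gamma)$, giving (c). The flat connection on each fiber comes from the local nilpotent Killing $N$-structure produced in the construction, which by equivariance is preserved by the $\so$-action. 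Setting $\tilde U_i(s) := \tilde \Phi_i^{-1}(\tilde \Omega(s))$ gives the desired $\so$-invariant compact domain.

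To close the sandwich $\tilde \Omega_i(s-1) \subset \tilde U_i(s) \subset \tilde \Omega_i(s+1)$, I would use that by \eqref{eq:fbound} and the argument in the proof of Proposition \ref{prop:po1}(b), the functions $\tilde f_i$ converge to $f_Y$ in $C^\infty$ on any local $G_i$-equivariant chart. Since each fiber of $\tilde \Phi_i$ has diameter $\le \ep_i \to 0$, the composition $f_Y \circ \tilde \Phi_i$ is $C^0$-close to $\tilde f_i$ on $\tilde V_i$, with error tending to $0$. Hence for large $i$, the preimage of $\tilde \Omega(s)$ is trapped between $\{\tilde f_i \le s-1\}$ and $\{\tilde f_i \le s+1\}$, as required.

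The main obstacle is the equivariance: the smoothing step in the classical Cheeger-Fukaya-Gromov construction produces $\tilde \Phi_i$ by averaging approximate local submersions using a fiberwise center-of-mass procedure on the local nilpotent structure. To keep this $\so$-equivariant I would carry out the averaging with $\so$-invariant cutoffs and the bi-invariant metric on $\so$, exploiting that the local nilpotent $N$-structure on $FM_i$ is normalized by the $\so$-action because $\so$ acts by isometries preserving the frame-bundle connection. This compatibility is built into the equivariant formulation in \cite[Sections 5-7]{CFG92} and \cite[Section 5.7]{Rong10}, and is consistent with the orbifold structure on $X$ away from the high-codimension singular set supplied by Proposition \ref{prop:po2}. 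Once this equivariant smoothing is in place, properties (a), (b), and (c) are direct outputs of the theorem; the only Ricci-shrinker-specific input is the quadratic growth \eqref{eq:id00} which makes $\tilde \Omega(s)$ compact and the cutoff possible.
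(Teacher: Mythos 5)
Your proposal is correct and follows essentially the same route as the paper, which does not prove Theorem \ref{thm:fib} but cites it as the known equivariant fibration theorem of Fukaya and Cheeger--Fukaya--Gromov (\cite[Theorem 9.1]{Fu88}, \cite[Theorem 2.6]{CFG92}, \cite[Theorem 0-1]{Fu89}, \cite[Theorem 5.7.1]{Rong10}) applied to the frame bundles, using Lemma \ref{L204} and Proposition \ref{prop:frame} exactly as you do. Your added details — localizing to the compact $\so$-invariant sublevel set, defining $\tilde U_i(s)=\tilde\Phi_i^{-1}(\tilde\Omega(s))$, and trapping it between $\tilde\Omega_i(s-1)$ and $\tilde\Omega_i(s+1)$ via the uniform closeness of $\tilde f_i$ to $f_Y\circ\tilde\Phi_i$ — are the natural way to make that citation precise and are consistent with the paper's intent.
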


It is clear that $\tilde \Phi_i$ induces a singular fibration $\Phi_i: U_i(s) \to \Omega(s)$, where $U_i(s):=\rho_i(\tilde U_i(s))$ for $\rho_i: FM_i \to M_i$. The following commutative diagram illustrates this relationship:
\[\begin{tikzcd} 
{(\tilde U_i(s),\tilde{g}_i)} && {(\tilde \Omega(s),d_{g_Y})} \\
\\
{(U_i(s),g_i)} && {(\Omega(s),d_X)}
\arrow["\Phi_i", from=3-1, to=3-3]
\arrow["{\rho}", from=1-3, to=3-3]
\arrow["{\rho_i}", from=1-1, to=3-1]
\arrow["\tilde \Phi_i", from=1-1, to=1-3]
\end{tikzcd}\]

For the singular fibration, we have the following result, the proof of which can be found in \cite[Theorem 2.1]{NT08}.

\begin{lem} \label{lem:sinfi}
	For any $t<s$, there exists a constant $C=C(n, A,t, Y)>1$ independent of $i$ and $s$ such that for any $x \in \Omega(t)$, $\Phi_i^{-1}(x)$ is a smooth submanifold with the second fundamental form bounded above by $C$ and normal injectivity radius bounded below by $C^{-1}$.
\end{lem}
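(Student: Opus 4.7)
The strategy is to lift the claim to the smooth $\mathrm{SO}(n)$-equivariant fibration $\tilde\Phi_i: \tilde U_i(s) \to \tilde\Omega(s)$, where Theorem \ref{thm:fib} provides uniform bounds, and then descend through the principal bundle $\rho_i: FM_i \to M_i$. For $x \in \Omega(t)$, the preimage $\rho^{-1}(x) \subset \tilde\Omega(t)$ is a single $\mathrm{SO}(n)$-orbit; by equivariance, $\widetilde F_x := \tilde\Phi_i^{-1}(\rho^{-1}(x))$ is a smooth $\mathrm{SO}(n)$-invariant submanifold of $FM_i$. Since $\mathrm{SO}(n)$ acts freely on the oriented frame bundle $FM_i$, the map $\widetilde F_x \to \widetilde F_x/\mathrm{SO}(n) = \Phi_i^{-1}(x) \subset M_i$ is a smooth principal $\mathrm{SO}(n)$-bundle, and hence $\Phi_i^{-1}(x)$ is a smooth submanifold of $M_i$.

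The next step is to bound the second fundamental form of $\widetilde F_x$ in $FM_i$. This submanifold fibers over the orbit $\rho^{-1}(x)$ with fibers $F_y := \tilde\Phi_i^{-1}(y)$. Theorem \ref{thm:fib}(b) gives a universal bound on the second fundamental form of each $F_y$ together with a diameter bound of order $\varepsilon_i \to 0$. On the other hand, since $\tilde\Omega(t) \subset Y$ is compact and the $\mathrm{SO}(n)$-action on $Y$ is smooth, the second fundamental form of $\rho^{-1}(x)$ in $Y$ and its normal injectivity radius in $\tilde\Omega(t)$ are bounded in terms of $(Y,t)$ alone. Because $\tilde\Phi_i$ is an almost Riemannian submersion by Theorem \ref{thm:fib}(a), these two ingredients combine through the standard O'Neill/submersion computation (applied to the horizontal pullback of $T\rho^{-1}(x)$ plus the vertical $TF_y$) to yield a bound on the second fundamental form of $\widetilde F_x$ in $FM_i$ depending only on $n, A, t, Y$. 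Descending through $\rho_i$, which is a Riemannian submersion with $\mathrm{SO}(n)$-fibers of bounded geometry by Lemma \ref{L204}, transfers this bound to $\Phi_i^{-1}(x)$ in $M_i$.

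For the lower bound on the normal injectivity radius, I would combine the just-obtained second fundamental form estimate with the curvature bound $|\mathrm{Rm}|\le A$ on $M_i$. A contradiction argument is convenient: failure of normal injectivity at a uniform scale would produce two points of $\Phi_i^{-1}(x)$ joined by a focal pair of short normal geodesics. Lifting to $FM_i$ gives two nearby points of $\widetilde F_x$. By the almost Riemannian submersion property of $\tilde\Phi_i$, their $\tilde\Phi_i$-images in $\rho^{-1}(x)$ are also nearby, and using the lower bound on the normal injectivity radius of $\rho^{-1}(x)$ in $\tilde\Omega(t)$ we may reduce to the case where the two lifts lie in the same fiber $F_y$; this then contradicts the bounded second fundamental form and vanishing diameter of $F_y$ provided in Theorem \ref{thm:fib}(b), together with the fact that the normal exponential map of a submanifold with bounded second fundamental form in a manifold of bounded curvature is a diffeomorphism at a definite scale.

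The main technical obstacle will be combining the control in the two transverse directions of $\widetilde F_x$ — the horizontal orbit direction controlled by the smooth structure of $(Y,g_Y)$ and the vertical nilpotent fiber direction controlled by Theorem \ref{thm:fib} — into a single estimate on $FM_i$ that is stable under the collapse $i \to \infty$, and then carefully tracking all constants through the two Riemannian submersions $\tilde\Phi_i$ and $\rho_i$. These estimates are precisely the content of \cite[Theorem 2.1]{NT08}, from which the statement can be obtained by the reduction described above.
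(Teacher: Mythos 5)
The paper does not prove Lemma \ref{lem:sinfi} at all: it simply quotes it from \cite[Theorem 2.1]{NT08}, which is exactly the singular fibration theorem for bounded-curvature collapse. Your proposal ends by invoking the same citation, but the reduction you wrap around it has a genuine gap. You claim that, since $\tilde\Omega(t)\subset Y$ is compact and the $\mathrm{SO}(n)$-action is smooth, the orbits $\rho^{-1}(x)=\mathrm{SO}(n)\cdot y$ have second fundamental form bounded above and normal injectivity radius bounded below by constants depending only on $(Y,t)$, uniformly over $x\in\Omega(t)$. This is false as soon as the action has non-principal orbits, i.e.\ positive-dimensional isotropy $I_x$ --- which is precisely what makes $\Phi_i$ a \emph{singular} fibration here (the paper's isotropy groups are finite extensions of tori, and fibers may even degenerate to points). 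The model case is the rotation action of $\mathrm{SO}(2)$ on $\R^2$: the orbit at distance $r$ from the fixed point has second fundamental form $r^{-1}$ and focal (hence normal injectivity) radius $r$, both degenerating as $r\to0$; the same degeneration occurs for orbits near any lower-dimensional orbit in $Y$. Since both your O'Neill-type bound for the second fundamental form of $\tilde\Phi_i^{-1}(\rho^{-1}(x))$ and your contradiction argument for the normal injectivity radius (which explicitly uses a uniform lower bound for the normal injectivity radius of $\rho^{-1}(x)$) feed on this input, your argument breaks down exactly for fibers over points near the singular strata of $X$, which is where the lemma has content and where uniformity in $x$ is the whole point. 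For a fixed regular $x$ your outline is reasonable, but that weaker statement is not Lemma \ref{lem:sinfi}.

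Relatedly, your closing appeal to \cite[Theorem 2.1]{NT08} miscasts its role: it is not a technical ingredient that patches together your two transverse estimates; it \emph{is} the statement being proved (uniform second fundamental form and normal injectivity radius control for all fibers, singular ones included, on compact subsets), and its proof there proceeds by local equivariant/covering arguments near each fiber rather than by descending orbitwise bounds from $Y$. So the proof intended by the paper is simply the citation; the additional reduction you supply does not constitute an independent proof, and its key uniformity step would fail.
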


Next, we investigate the topology of $U_i(s)$ in relation to $M_i$.

\begin{prop} \label{prop:topo3}
There exists a constant $L=L(n, A)>0$ such that for any $s \ge L$, if $i$ is sufficiently large, $\partial U_i(s)$ is $h$-cobordant to $\partial \Omega_i(s)$. Moreover, $U_i(s)$ is homotopy equivalent to $M_i$.
\end{prop}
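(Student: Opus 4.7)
The plan is to combine the fibration structure $\tilde\Phi_i$ from Theorem \ref{thm:fib} with the smoothness of $f_Y$ on $Y$ to establish transversality of $\nabla f_i$ to $\partial U_i(s)$, and then derive both conclusions from the gradient flow of $f_i$. First, I would choose $L = L(n,A)$ large enough that Lemma \ref{L203}(a) and Lemma \ref{lem:topo2} apply, and so that, by Proposition \ref{prop:po1}(b), $|\nabla_Y f_Y| \ge \sqrt{L - C(n)A} > 0$ on $\{f_Y \ge L\}$; then for every $s \ge L$ the level set $\partial\tilde\Omega(s) = f_Y^{-1}(s)$ is a smooth regular hypersurface in $Y$. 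Since $\tilde\Phi_i$ is a smooth $\mathrm{SO}(n)$-equivariant fibration, $\partial\tilde U_i(s) = \tilde\Phi_i^{-1}(\partial\tilde\Omega(s))$ is a smooth $\mathrm{SO}(n)$-invariant hypersurface of $FM_i$, which descends to a smooth hypersurface $\partial U_i(s) \subset M_i$.

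The key analytic step will be to show that for all sufficiently large $i$, the gradient $\nabla f_i$ is transversal to $\partial U_i(s)$ and outward-pointing. To establish this I would lift to $FM_i$ and appeal to the slice representation from the proof of Proposition \ref{prop:po1}(b): within each local slice the functions $\tilde f_i$ converge smoothly and equivariantly to a function whose quotient is $f_Y$. Combined with the almost Riemannian submersion property and the uniform fiber bounds of Theorem \ref{thm:fib}(a)--(b), this yields $C^1$-closeness of $\tilde f_i$ to $\tilde\Phi_i^* f_Y$ on compact subsets of $\tilde U_i(s)$. Because $|\nabla_Y f_Y| > 0$ and $\nabla_Y f_Y$ is itself transversal to $\partial\tilde\Omega(s)$, the horizontal part of $\nabla_{\tilde g_i}\tilde f_i$ becomes an arbitrarily small perturbation of the horizontal lift of $\nabla_Y f_Y$, so $\nabla \tilde f_i$ is transversal to $\partial\tilde U_i(s)$ and pointing in the direction of increasing $\tilde f_i$. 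Descending to the $\mathrm{SO}(n)$-quotient then gives the desired transversality on $M_i$.

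Once this is in hand, both conclusions follow quickly. By Lemma \ref{L203}(a) the annulus $C_i := \Omega_i(s+1) \setminus \mathrm{int}(\Omega_i(s-1))$ is diffeomorphic to $\partial\Omega_i(s) \times [-1,1]$ via the normalized gradient flow of $f_i$, and the containment $\Omega_i(s-1) \subset U_i(s) \subset \Omega_i(s+1)$ places both $\partial U_i(s)$ and $\partial\Omega_i(s)$ inside $C_i$. Each gradient line meets $\partial\Omega_i(s)$ exactly once by strict monotonicity of $f_i$ along the flow, and by transversality together with the outward-pointing property it also meets $\partial U_i(s)$ exactly once for $i$ large. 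The region in $C_i$ between the two hypersurfaces is therefore a trivial product cobordism, hence in particular an $h$-cobordism. Reversing the gradient flow moreover yields a deformation retraction of $U_i(s)$ onto $\Omega_i(s-1)$, which is homotopy equivalent to $M_i$ by Lemma \ref{L203}(a). The hardest step will be making the $C^1$-closeness of $\tilde f_i$ to $\tilde\Phi_i^* f_Y$ precise and uniform over $\tilde U_i(s)$, including near non-principal $\mathrm{SO}(n)$-orbits of $Y$; this requires a careful reading of the smoothing underlying the fibration theorem of Cheeger--Fukaya--Gromov \cite{CFG92} and Fukaya \cite{Fu88}, stitched together with the slice-by-slice smooth convergence in Proposition \ref{prop:po1}(b).
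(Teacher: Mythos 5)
Your argument reaches the correct conclusions, but by a genuinely different route from the paper. The paper never discusses transversality of $\na f_i$: it applies Bierstone's equivariant covering homotopy theorem \cite{Bi73} to the collar $\tilde \Omega(s) \setminus \mathrm{int}(\tilde \Omega(t))$ furnished by Lemma \ref{lem:topo2} (a), concluding that the hypersurfaces $\Sigma_i(t)=\rho_i(\tilde \Phi_i^{-1}(\partial \tilde \Omega(t)))$, $t \in [s/2,s]$, are mutually diffeomorphic and foliate the end of $U_i(s)$; interleaving this with the foliation by the level sets $\partial \Omega_i(t)$ from Lemma \ref{L203} (a), a standard topological sandwich argument (\cite[Theorem 5.23]{CL21}) gives the $h$-cobordism and then the homotopy equivalence. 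Your route is analytic rather than topological: it trades that soft argument for $C^1$-closeness of $\tilde f_i$ to $\tilde \Phi_i^* f_Y$, and, if that estimate is in place, it buys more than the paper proves (a product cobordism and an explicit deformation retraction of $U_i(s)$ onto $\Omega_i(s-1)$). The cost is exactly the step you flag: Theorem \ref{thm:fib} (a)--(b) as stated controls only $\mathrm{d}\tilde \Phi_i$ on horizontal vectors and the diameter and second fundamental form of the fibers, while your horizontal gradient comparison needs in addition $C^2$-type regularity of $\tilde \Phi_i$ (equivalently, uniform $C^1$-control of the hypersurfaces $\tilde \Phi_i^{-1}(\partial \tilde \Omega(t))$), which is available in the Cheeger--Fukaya--Gromov construction \cite{CFG92} \cite{Fu88} and is consistent with Lemma \ref{lem:sinfi} and Theorem \ref{thm:dia} (g), but must be quoted or proved; the vertical estimate, by contrast, is cheap, since each fiber is closed with diameter at most $\ep_i$ and the restricted Hessian is uniformly bounded, so the intrinsic gradient of $\tilde f_i$ along the fiber is $O(\ep_i)$.

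Two smaller points. First, the caution about non-principal $\mathrm{SO}(n)$-orbits is unnecessary: run the entire transversality argument upstairs on $FM_i$, where $\mathrm{SO}(n)$ acts freely and both $\partial \tilde U_i(s)$ and $\tilde f_i$ are invariant, and then descend; there is no singular behavior to handle at this stage. Second, $\partial U_i(s)$ and $\partial \Omega_i(s)$ need not be disjoint (both lie in a thin neighborhood of $\{f_i=s\}$), so \emph{the region between the two hypersurfaces} is not literally a cobordism between them. Your exactly-once crossing statement repairs this: it exhibits each hypersurface as a graph over the other in the flow coordinates, hence they are diffeomorphic and a product cobordism serves as the required $h$-cobordism; alternatively, compare each with a disjoint level set such as $\partial \Omega_i(s+2)$ and use Lemma \ref{L203} (a). With these adjustments your proof is complete and independent of the paper's argument.
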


\begin{proof}
From Lemma \ref{lem:topo2} (a), for any $s/2 \le t< s$, the region $\tilde \Omega(s) \setminus \mathrm{int}(\tilde \Omega(t))$ is $\mathrm{SO}(n)$-diffeomorphic to $\partial \tilde \Omega(t) \times [0,1]$. Define $\tilde \Sigma_i(t):=\tilde \Phi_i^{-1}(\partial \tilde \Omega(t))$. By \cite[Theorem 3.1]{Bi73}, the fiber bundle $\tilde\Phi_i$ over $\tilde \Omega(s) \setminus \mathrm{int}(\tilde \Omega(t))$ is $\mathrm{SO}(n)$-diffeomorphic to $ \lc \tilde\Phi_i \vert \tilde \Sigma_i(t)\rc \times [0,1]$. In particular, $\tilde \Sigma_i(t)$ is $\mathrm{SO}(n)$-diffeomorphic to $\tilde \Sigma_i(s)$.

Now, let $\Sigma_i(t):=\rho_i(\tilde \Sigma_i(t))$. It follows that $\Sigma_i(t)$ is diffeomorphic to $\Sigma_i(s)$. In other words, $\Sigma_i(t)$ for $t \in [s/2,s]$ forms a foliation of the end of $U_i(s)$. On the other hand, from Lemma \ref{L203} (a), $\partial \Omega_i(t)$ for $t \in [s/2,s]$ forms another foliation of the end of $U_i(s) \cap \Omega_i(s)$. 

By a standard topological argument (see, e.g., \cite[Theorem 5.23]{CL21}), it follows that $\partial U_i(s)$ is $h$-cobordant to $\partial \Omega_i(s)$. From the $h$-cobordism, it is straightforward to construct a homotopy equivalence from $U_i(s)$ to $\Omega_i(s)$ and, by Lemma \ref{L203} (a), to $M_i$.
\end{proof}

Under additional topological assumptions, we have the following conclusions. Statement (a) is well-known to experts; since we could not locate a reference, we provide a proof. Statements (b) and (c) are due to Rong \cite{Rong96}, but we include a proof for the reader's convenience. Notice that any Ricci shrinker has finite fundamental group; see \cite{WW07}.

\begin{prop} \label{prop:topo4}
For fixed $s \ge L(n, A)$, the following conclusions hold:
\begin{enumerate}[label=\textnormal{(\alph{*})}]
\item The fiber of $\tilde \Phi_i$ is a nilmanifold, i.e., $N/\Gamma$, where $N$ is a simply connected nilpotent group and $\Gamma$ is a lattice contained in $N$.

\item The fiber of $\tilde \Phi_i$ is a torus $T^k$.

\item If $\pi_1(FM_i)=0$, then $\tilde \Phi_i: \tilde U_i(s) \to \tilde \Omega(s)$ is a $T^k$-principal bundle such that the $T^k$-action on $\tilde U_i(s)$ commutes with the $\mathrm{SO}(n)$-action. In particular, there exists a $T^k$-action on $U_i(s)$, whose orbits correspond to the singular fibers of $\Phi_i$.
\end{enumerate}
\end{prop}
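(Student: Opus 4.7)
I will prove (a), (b), (c) in order: (a) upgrades the infra-nilmanifold fibers of Theorem \ref{thm:fib} to honest nilmanifolds by exploiting the parallelism on $FM_i$; (b) uses the curvature bound together with the smallness of fibers to force the nilpotent group $N$ to be abelian; (c) combines (b) with the simple connectivity of $\tilde U_i(s)$ to reduce the structure group and then descends the resulting $T^k$-action to $U_i(s)$.

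\textbf{Part (a).} From Theorem \ref{thm:fib}(c) the fiber is affine-equivalent to $N/\Gamma$ with $[\Gamma:\Gamma\cap N]<\infty$, and the task is to eliminate the finite twist $\Gamma/(\Gamma\cap N)$. The plan is to use that the oriented frame bundle $FM_i$ carries a canonical global parallelism built from the soldering form together with the Levi-Civita connection form; this parallelism restricts to an invariant framing on each fiber compatible with the canonical flat connection from Theorem \ref{thm:fib}(c). The existence of such a global invariant framing, by the standard argument behind the Cheeger-Fukaya-Gromov frame-bundle construction, forces the $\Gamma$-action on $N$ to be by left translations, giving $\Gamma\subset N$.

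\textbf{Part (b).} Fix a fiber $F_i$ of diameter $\epsilon_i\to 0$, and rescale by setting $\hat g_i:=\epsilon_i^{-2}\tilde g_i$. By Lemma \ref{L204} the rescaled ambient curvature is $O(\epsilon_i^2)\to 0$; by Theorem \ref{thm:fib}(b) the rescaled second fundamental form of $F_i$ vanishes in the limit; and the rescaled diameter of $F_i$ is of unit order. Gauss's equation then yields $|\Rm^{F_i}(\hat g_i|_{F_i})|=o(1)$. Gromov's almost flat manifold theorem supplies a uniform lower injectivity-radius bound once the curvature is small enough, so Cheeger-Gromov compactness produces a smooth subsequential limit that is a compact \emph{flat} nilmanifold diffeomorphic to $F_i$. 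By the classical fact that a flat compact nilmanifold is a torus (Milnor's observation that a flat left-invariant metric on a nilpotent Lie group forces abelianness, together with Bieberbach-type rigidity), $N$ is abelian and $F_i\cong T^k$. I expect this step to be the technical heart of the proposition, since extracting a genuine smooth flat limit (and not merely a further collapse) requires carefully controlling both the intrinsic and the extrinsic geometry of the fibers under rescaling.

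\textbf{Part (c).} Applying the $\so$-equivariant gradient flow of $\tilde f_i$ on $FM_i$ exactly as in Proposition \ref{prop:topo3}, but using Lemma \ref{L203}(a) at the frame-bundle level via Lemma \ref{L204}, shows that $\tilde U_i(s)$ is homotopy equivalent to $FM_i$, hence simply connected by hypothesis. The long exact sequence of $T^k\to\tilde U_i(s)\to\tilde\Omega(s)$ then gives $\pi_1(\tilde\Omega(s))=0$. By (b), $N$ is abelian, so Theorem \ref{thm:fib}(c) bounds the structure group of $\tilde\Phi_i$ by $T^k\rtimes GL(k,\Z)$; the $GL(k,\Z)$-monodromy is a homomorphism from the trivial group and therefore vanishes, reducing the structure group to $T^k$ and making $\tilde\Phi_i$ a principal $T^k$-bundle. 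That this $T^k$-action commutes with $\so$ is immediate from Theorem \ref{thm:fib}(c), since $\so$ preserves the canonical flat connections that define the $T^k$-translations. Because the two actions commute, the $T^k$-action descends to the quotient $U_i(s)=\tilde U_i(s)/\so$, and its orbits are the $\so$-quotients of the fibers of $\tilde\Phi_i$, which by the commutative diagram after Theorem \ref{thm:fib} are exactly the singular fibers of $\Phi_i$.
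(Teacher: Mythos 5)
Your part (b) has a genuine gap, and it is the decisive one. You try to force the fiber to be a torus by pure geometry: rescale the fiber to unit diameter, note the rescaled intrinsic curvature is $o(1)$, and then claim that ``Gromov's almost flat manifold theorem supplies a uniform lower injectivity-radius bound,'' so that a smooth \emph{flat} limit diffeomorphic to the fiber exists. This is backwards: almost flat manifolds are precisely the ones that collapse with bounded curvature, and after normalizing the diameter the injectivity radius can be arbitrarily small. The rescaled fibers will in general collapse further, and Gromov's theorem only says they are infranilmanifolds --- it does not produce a flat limit, and non-toral nilmanifold fibers genuinely occur for bounded-curvature collapse (e.g.\ Heisenberg nilmanifold fibers). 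Indeed, statement (b) is \emph{not} a consequence of bounded curvature and small fibers alone, so no argument that ignores the topology of $M_i$ can succeed. The paper's proof of (b) is topological: Ricci shrinkers have finite fundamental group (\cite{WW07}), Proposition \ref{prop:topo3} transfers this to $U_i(s)$, the exact sequence \eqref{eq:301} makes $\pi_1(\tilde U_i(s))$ finite, and then the exact sequence \eqref{eq:302} of the fibration $\tilde\Phi_i$ shows the lattice $\Gamma$ contains a finite-index abelian subgroup (the image of $\pi_2(\tilde\Omega(s))$), whence Malcev rigidity forces $N$ to be abelian and the fiber to be $T^k$. Your proposal never invokes finiteness of $\pi_1(M_i)$, so it cannot be repaired without changing the method.

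On the other parts: your (a) is only a sketch (``the parallelism forces $\Gamma\subset N$''), whereas the paper runs a concrete blow-up argument --- rescale around a fiber $S_i\subset M_i$, pass to the universal cover of a tubular neighborhood, identify the limit group as $H\ltimes\R^{n-m}$ with $H$ finite, and then read off from the frame-bundle limit $\R^m\times(\mathrm{SO}(n)/H)$ that the frame-bundle fiber $\tilde S_i$ is a genuine nilmanifold; your heuristic points in the right direction but would need this (or an equivalent) argument to be a proof. In (c), your structure-group reduction matches the paper, but ``the $T^k$-action commutes with $\so$ is immediate since $\so$ preserves the flat connections'' is too quick: preserving the flat connection only makes the fiberwise action affine, i.e.\ valued in $T^k\rtimes\mathrm{Aut}(\Gamma)$, and one still needs the discreteness of $\mathrm{Aut}(\Gamma)$ together with the connectedness of $\so$ (a continuity argument, as in the paper) to kill the $\mathrm{Aut}(\Gamma)$-component before commutativity follows.
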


\begin{proof}
(a): Fix $y \in \tilde \Omega(s/2)$ such that $x= \rho(y)$ is a regular point of $X$. Let $S_i=\Phi_i^{-1}(x)$. By Lemma \ref{lem:sinfi}, $S_i$ has a uniformly bounded second fundamental form and a uniform positive lower bound $\delta _0$ for its normal injectivity radius. Moreover, $S_i$ is an almost flat manifold with bounded curvature and a diameter that tends to zero.

Consider a sequence $\lambda_i \to +\infty$ such that the diameter of $S_i$, under the rescaled metric $h_i:=\lambda^2_i g_i$, converges to zero. Moreover, under this new metric $h_i$, the curvature bound converges to zero, the second fundamental form of $S_i$ approaches zero, and the normal injectivity radius of $S_i$ tends to infinity.

Define $V_i:=B_{g_i}(S_i, \delta _0)$ with fundamental group $\Gamma_i$ and let $V'_i$ be its universal cover. Using the pullback metric $h'_i$ on $V'_i$ and fixing a point $p_i \in S_i$ with lift $p'_i \in V'_i$, we conclude from \cite{BK81} and \cite[Lemma 2.5]{NT08} that the injectivity radius at $p_i'$ tends to infinity. Therefore, we have the following convergence:
\begin{align*}
	( V'_i, h'_i, p'_i, \Gamma_i) \longright{\text{equivariant pointed }C^{\infty}} (\R^n, g_{E}, 0, \Gamma).
\end{align*}

If $m=\mathrm{dim}(X)$, our construction ensures that the preimage of $S_i$ in $V'_i$, denoted by $S'_i$, converges smoothly to the subspace $\R^{n-m} \subset \R^n$. Moreover, the limit group $\Gamma \le \mathrm{Aff}(\R^{n-m})$. Following the same argument as in \cite[Section 8]{Fu90} (see also \cite[Lemma 5.3.3]{Rong10}), we deduce that $\Gamma=H \ltimes \R^{n-m}$, where $H$ is a finite subgroup of $\mathrm{SO}(n-m)$.

Since $\Gamma_i$ acts naturally on $F V'_i$ and $F V'_i/\Gamma_i=FV_i$, we have the following convergence:
\begin{align*}
	\lc FV_i, \tilde h_i ,\tilde p_i,\mathrm{SO}(n) \rc \longright{\text{equivariant pointed Gromov-Hausdorff}} \lc F(\R^n)/\Gamma, \mathrm{SO}(n) \rc,
\end{align*}
where $\tilde p_i$ is a lift of $p_i$, $\tilde h_i=\lambda_i^2 \tilde g_i$ and $F(\R^n)=\R^n \times \mathrm{SO}(n)$ is equipped with the frame bundle metric. The limit $F(\mathbb{R}^n) / \Gamma $, determined by the structure of $\Gamma$, is $\R^m \times (\mathrm{SO}(n)/H)$. By the $\mathrm{SO}(n)$-equivariance of $\tilde \Phi_i$, it follows that $\tilde S_i/H$ is diffeomorphic to $S_i$, where $\tilde S_i:=\tilde \Phi_i^{-1}(y)$. Consequently, from the proof of the theorem on almost flat manifolds (see \cite[Section 8,9]{Fu90}, \cite[Section 5.3]{Rong10} and \cite{BK81}), we conclude that $\tilde S_i$ is a nilmanifold.

(b): Assume $n \ge 3$, so $\pi_1(\mathrm{SO}(n))=\Z_2$. Since $\pi_1(M_i)$ is finite, Proposition \ref{prop:topo3} implies that $\pi_1(U_i(s))$ is also finite. Since $\tilde U_i(s)=FU_i(s)$, the homotopy exact sequence 
\begin{align} \label{eq:301}
\pi_2(U_i(s)) \longrightarrow \pi_1(\mathrm{SO}(n))=\Z_2 \longrightarrow \pi_1(\tilde U_i(s)) \longrightarrow \pi_1(U_i(s))\longrightarrow 0
\end{align}
shows that $\pi_1(\tilde U_i(s))$ is finite. From the fibration $\tilde \Phi_i$, we have another homotopy exact sequence:
\begin{align}\label{eq:302}
\pi_2(\tilde \Omega(s)) \longrightarrow \pi_1(N/\Gamma)=\Gamma \longrightarrow \pi_1(\tilde U_i(s)) \longrightarrow \pi_1(\tilde \Omega(s)) \longrightarrow 0.
\end{align}
This implies that $\Gamma$ has an abelian subgroup $\Gamma'$ of finite index. Since $\Gamma'$ is also a uniform discrete subgroup of $N$, it follows from Malcev's rigidity theorem \cite{Ma51} that $N$ itself is abelian. Consequently, $\Gamma$ is abelian, and $N/\Gamma$ is a torus $T^k$.

(c): By assumption, $\tilde U_i(s)$ is simply connected, as it is homotopy equivalent to $FM_i$. Then, from \eqref{eq:301}, $U_i(s)$ and consequently $M_i$ are also simply connected. Moreover, \eqref{eq:302} implies that $\tilde \Omega(s)$ and hence $Y$ are simply connected.

From statement (b) above and Theorem \ref{thm:fib} (c), the structure group of $\tilde \Phi_i$ is contained in $T^k \rtimes \mathrm{Aut}(\Gamma)$, where $\Gamma=\Z^k$. Since $\Gamma$ is discrete and $\tilde \Omega(s)$ is simply connected, the structure group reduces to $T^k$. In other words, $\tilde \Phi_i$ is a $T^k$-principal bundle.

For any $z_0 \in \tilde U_i(s)$ and $\xi _0 \in \so$, there exist local trivializations $ T^k \times U$ and $ T^k \times V$ around $z_0$ and $\xi _0 z_0$, respectively, such that
\begin{align*}
\xi (t, x)=(\phi_\xi (t,x), \psi_\xi (x))
\end{align*}
for $(t, x) \in T^k \times U$ in a small neighborhood of $z_0$ and $\xi $ in a small neighborhood of $\xi _0$. Note that $\psi_\xi $ is independent of $t$ because $\tilde \Phi_i$ is $\so$-invariant. Since the $\so$-action preserves the flat connections by Theorem \ref{thm:fib} (c), $\phi_\xi $ is an affine transformation, meaning $\phi_\xi  \in T^k \rtimes \mathrm{Aut}(\Gamma)$. As $\mathrm{Aut}(\Gamma)$ is discrete and $\so$ is connected, a continuity argument shows that the $\mathrm{Aut}(\Gamma)$-component of $\phi_\xi $ is the identity. Thus, the $T^k$-action and $\so$-action commute.

Consequently, the $T^k$-action on $\tilde U_i(s)$ descends to a $T^k$-action on $U_i(s)$, whose orbits correspond to the singular fibers of $\Phi_i$.
\end{proof}

For the rest of this section, we further assume that each $M_i$ is simply connected. By the homotopy exact sequence of the fibration $\so \to FM_i \to M_i$, it follows that $\pi_1(FM_i)=0$ or $\Z_2$. 

If $\pi_1(FM_i)=0$, the assumption of Proposition \ref{prop:topo4} (c) is satisfied. Thus, there exists a free $T^k$-action on $\tilde U_i(s)$ which commutes with the $\so$-action. 

If $\pi_1(FM_i)=\Z_2$, then $M_i$ is a spin manifold. Indeed, this can be derived from the exact sequence: 
\begin{align*}
H^1(M, \Z_2)=0 \longrightarrow H^1(FM, \Z_2)=\Z_2 \longrightarrow H^1(\so,\Z_2)=\Z_2 \longright{w_2} H^2(M, \Z_2),
\end{align*}
where $w_2$ corresponds to the second Stiefel-Whitney class.

Let $\hat FM_i$ denote the universal cover of $FM_i$. The space $\hat FM_i$ naturally serves as the total space of the $\spin$-bundle over $M_i$, where the covering map $\pi_i: \hat FM_i \to FM_i$, when restricted to the fiber, corresponds to the double cover $\spin \to \so$. Let $\hat p_i$, $\hat g_i$ and $\hat f_i$ denote the lifts of $p_i$, $g_i$ and $f_i$, respectively. Analogous to Proposition \ref{prop:frame}, we have
\begin{align} \label{eq:conv4}
	(\hat FM_i, \hat g_i, \spin, \hat f_i ,\hat p_i) \longright{\text{equivariant pointed Gromov-Hausdorff}} (\hat Y, g_{\hat Y}, \spin, f_{\hat Y},p_{\hat Y}),
\end{align}
where $(\hat Y,g_{\hat Y})$ is a smooth Riemannian manifold, and $f_{\hat Y}$ is a smooth function. Furthermore, the free $\Z_2$-action on $\hat FM_i$ converges to a free $\Z_2$-action on $\hat Y$, inducing the projection $ \pi: \hat Y \to \hat Y/\Z_2=Y$. Let $\hat \Omega(s):=\{ x \in \hat Y \mid f_{\hat Y}(x) \le s\}$. Clearly, $\hat \Omega(s)$ is a double cover of $\tilde \Omega(s)$.

We can lift the fibration $\tilde \Phi_i: \tilde U_i(s) \to \tilde \Omega(s)$ to a fibration $\hat \Phi_i: \hat U_i(s) \to \hat \Omega(s)$, where $\hat U_i(s) \subset \hat FM_i$ is a $\spin$-invariant double cover of $\tilde U_i(s)$. This setup results in the following commutative diagram:
\[\begin{tikzcd} 
{(\hat U_i(s),\hat{g}_i)} && {(\hat \Omega(s),d_{g_{\hat Y}})} \\
\\
{(\tilde U_i(s),\tilde g_i)} && {(\tilde \Omega(s),d_{g_Y})}
\arrow["\tilde\Phi_i", from=3-1, to=3-3]
\arrow["{ \pi}", from=1-3, to=3-3]
\arrow["{\pi_i}", from=1-1, to=3-1]
\arrow["\hat \Phi_i", from=1-1, to=1-3]
\end{tikzcd}\]

By following the proof of Proposition \ref{prop:topo4} (c), we obtain the following result:

\begin{prop} \label{prop:topo5}
For fixed $s \ge L(n, A)$, $\hat \Phi_i: \hat U_i(s) \to \hat \Omega(s)$ is a $T^k$-principal bundle so that the $T^k$-action on $\hat U_i(s)$ commutes with the $\spin$-action. In particular, there exists a $T^k$-action on $U_i(s)$, whose orbits correspond to the singular fibers of $\Phi_i$.
\end{prop}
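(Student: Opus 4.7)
The plan is to adapt the proof of Proposition \ref{prop:topo4}(c) to the $\spin$-bundle setting, with $\hat FM_i$ in place of $FM_i$. The three ingredients to verify are: that $\hat U_i(s)$ is simply connected, that the fiber of $\hat \Phi_i$ is a torus $T^k$, and that $\hat \Omega(s)$ is simply connected. Once these are established, the argument transports essentially verbatim from the $\so$ case.

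First, I would check simple connectedness of $\hat U_i(s)$: by construction $\hat FM_i$ is the universal cover of $FM_i$, so $\pi_1(\hat FM_i)=0$; the gradient-flow argument of Proposition \ref{prop:topo3} applies to the lifted data and yields that $\hat U_i(s)$ is homotopy equivalent to $\hat FM_i$, hence simply connected. Next, for the fiber of $\hat\Phi_i$: the covering $\pi_i:\hat U_i(s)\to\tilde U_i(s)$ is a double cover, and by the commutative diagram $\pi\circ\hat\Phi_i=\tilde\Phi_i\circ\pi_i$, so $\hat\Phi_i$ is the pullback of $\tilde\Phi_i$ under $\pi:\hat\Omega(s)\to\tilde\Omega(s)$; in particular its fiber coincides with that of $\tilde\Phi_i$, which is a torus $T^k$ by Proposition \ref{prop:topo4}(b). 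Finally, the homotopy exact sequence of $\hat\Phi_i$, which begins with $\pi_1(T^k)=\Z^k\to\pi_1(\hat U_i(s))=0\to\pi_1(\hat\Omega(s))\to 0$, forces $\pi_1(\hat\Omega(s))=0$.

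With these properties in hand, Theorem \ref{thm:fib}(c) places the structure group of $\hat\Phi_i$ inside $T^k\rtimes\mathrm{Aut}(\Z^k)$; simple connectedness of $\hat\Omega(s)$ together with the discreteness of $\mathrm{Aut}(\Z^k)$ reduces the structure group to $T^k$, so $\hat\Phi_i$ is a $T^k$-principal bundle. The local-trivialization continuity argument from Proposition \ref{prop:topo4}(c) then transfers word-for-word with $\spin$ replacing $\so$, using that $\spin$ is connected and acts preserving the flat connections on the fibers, to conclude that the $T^k$-action on $\hat U_i(s)$ commutes with the $\spin$-action. Descending by $\spin$ yields a $T^k$-action on $\hat U_i(s)/\spin = \tilde U_i(s)/\so = U_i(s)$ whose orbits are exactly the singular fibers of $\Phi_i$. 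The only bookkeeping point worth scrutinizing is the identification of $\hat\Phi_i$ as the pullback of $\tilde\Phi_i$; no new analytic or geometric input beyond what was used in Proposition \ref{prop:topo4} is required.
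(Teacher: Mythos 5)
Your proposal is correct and follows essentially the same route as the paper, which proves Proposition \ref{prop:topo5} precisely by rerunning the argument of Proposition \ref{prop:topo4} (c) for the lifted data $(\hat U_i(s),\hat\Phi_i,\hat\Omega(s))$ with $\spin$ in place of $\so$; the three facts you verify (simple connectedness of $\hat U_i(s)$, torus fiber, simple connectedness of $\hat\Omega(s)$) are exactly the inputs that argument needs. The one point you flag—identifying $\hat\Phi_i$ with the pullback of $\tilde\Phi_i$—is indeed harmless: since $\pi_1(\tilde U_i(s))=\Z_2$ the pullback cover is connected and agrees with $\hat U_i(s)$, and in any case a connected finite cover of $T^k$ is again $T^k$.
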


Combining Proposition \ref{prop:topo4} with Proposition \ref{prop:topo5}, we conclude that, under the assumption $\pi_1(M_i)=0$, there always exists a $T^k$-action on $U_i(s)$, whose orbits correspond to the singular fibers of $\Phi_i$. By our construction, the $T^k$-action has no fixed set if $i$ is sufficiently large. Indeed, if there exists a $T^k$-orbit that consists of a single point $q_i \in U_i(s)$, it follows from Lemma \ref{lem:sinfi} that the injectivity radius at $q_i$ is uniformly bounded below by a positive constant, which contradicts our assumption of collapsing.

In general, the metric $g_i$ or function $f_i$ may not be $T^k$-invariant. Using a standard averaging argument as in \cite{CFG92}, we can construct a nearby $T^k$-invariant metric and function. Specifically, if $FM_i$ is simply connected, we can first average $\tilde g_i$ and $\tilde f_i$ on $\tilde U_i(s)$ using the free $T^k$-action to obtain $\tilde g'_i$ and $\tilde f'_i$. The difference between $\tilde g_i$ (resp. $\tilde f_i$) and $\tilde g'_i$ (resp. $\tilde f'_i$) in any $C^l$-norm converges to zero as $i \to \infty$. (Details of this calculation can be found, for example, in \cite[Proposition 5.10]{CL21}.)

Since the $T^k$-action and the $\so$-action commute by Proposition \ref{prop:topo4} (c), both $\tilde g'_i$ and $\tilde f'_i$ are $\so$-invariant. Consequently, their descents on $U_i(s)$, denoted by $g_i'$ and $f_i'$, are a $T^k$-invariant metric and a $T^k$-invariant function, respectively. Moreover, by construction, $(\tilde{U}_i(s), \tilde g'_i)$ is exactly the frame bundle associated with $(U_i(s), g'_i)$.

If $\pi_1(FM_i)=\Z_2$, a similar construction can be applied using Proposition \ref{prop:topo5}.

In summary, we have the following result:

\begin{prop} \label{prop:ave}
Given $s \ge L(n, A)$ and $\ep>0$, if $i$ is sufficiently large, there exist $T^k$-invariant smooth metric $g^{\ep}_i$ and function $f^{\ep}_i$ on $U_i(s)$ such that for any $0 \le l \le [\ep^{-1}]$, 
\begin{align*}
|\na_{g_i}^l (g^{\ep}_i-g_i) |_{g_i}+|\na_{g_i}^l (f^{\ep}_i-f_i)|_{g_i} \le \ep.
\end{align*}
\end{prop}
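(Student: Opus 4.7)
The plan is to follow the averaging scheme already sketched in the paragraphs preceding the proposition, and to give uniform $C^l$ bounds by exploiting the bounded geometry established in Lemma \ref{L204} and Lemma \ref{L203}(c) together with the fact that the orbits of the principal $T^k$-action on $\tilde U_i(s)$ (or $\hat U_i(s)$) shrink to points as $i\to\infty$.

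Let $F_i$ denote $\tilde U_i(s)$ in the case $\pi_1(FM_i)=0$ and $\hat U_i(s)$ in the case $\pi_1(FM_i)=\Z_2$; in either case Proposition \ref{prop:topo4}(c) or Proposition \ref{prop:topo5} supplies a principal $T^k$-action on $F_i$ that is free and commutes with the structure group $G\in\{\so,\spin\}$. Let $\bar g_i$ and $\bar f_i$ denote the bundle metric and the lifted potential on $F_i$. Normalize Haar measure on $T^k$ to have total mass one, and for each $t\in T^k$ let $L_t$ denote the corresponding diffeomorphism of $F_i$. Define
\begin{equation*}
\bar g_i^\ep:=\int_{T^k}L_t^*\bar g_i\,dt,\qquad \bar f_i^\ep:=\int_{T^k}L_t^*\bar f_i\,dt.
\end{equation*}
By construction these are $T^k$-invariant and, because the $T^k$- and $G$-actions commute, they remain $G$-invariant; hence they descend to a smooth metric $g_i^\ep$ and function $f_i^\ep$ on $U_i(s)$. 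Moreover, by $G$-invariance of the averages, the bundle metric of $(U_i(s),g_i^\ep)$ coincides with $\bar g_i^\ep$ on $F_i$, so it suffices to estimate $\|\bar g_i^\ep-\bar g_i\|_{C^l(\bar g_i)}$ and $\|\bar f_i^\ep-\bar f_i\|_{C^l(\bar g_i)}$ uniformly in $i$.

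For the $C^l$ bounds I would follow the calculation in \cite[Proposition 5.10]{CL21}, which is the standard averaging estimate going back to \cite{CFG92}. Fix a point $x\in F_i$ and an orthonormal frame at $x$. Because $|\Rm(\bar g_i)|_{C^l}$ and $|\bar f_i|_{C^l}$ are uniformly bounded (Lemma \ref{L204} and Lemma \ref{L203}(c), lifted to the bundle), there is a definite radius $r_0=r_0(n,A,l)$ on which $\bar g_i$ and $\bar f_i$ have uniformly bounded $C^{l+1}$ norm in harmonic coordinates. By Theorem \ref{thm:fib}(b) together with Proposition \ref{prop:topo4}(b), the $T^k$-orbit through $x$ has diameter at most $\ep_i\to 0$; in particular, for $i$ large every $T^k$-orbit lies inside a single harmonic chart of radius $r_0$. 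In such a chart a pointwise Taylor expansion gives
\begin{equation*}
|L_t^*\bar g_i(x)-\bar g_i(x)|_{\bar g_i}\le C(n,A,l)\,\mathrm{dist}_{\bar g_i}(x,L_t x),
\end{equation*}
and analogous estimates for all derivatives up to order $l$ and for $\bar f_i$ in place of $\bar g_i$. Integrating over $T^k$ yields $\|\bar g_i^\ep-\bar g_i\|_{C^l(\bar g_i)}+\|\bar f_i^\ep-\bar f_i\|_{C^l(\bar g_i)}\le C(n,A,l)\,\ep_i$, which can be made smaller than $\ep$ by choosing $i$ so large that $C(n,A,[\ep^{-1}])\,\ep_i<\ep$.

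The only genuinely delicate point is making sure the harmonic radius and the constants $C(n,A,l)$ do not blow up as the collapse progresses; this is exactly where the uniform bounds $|\nabla^l\Rm(\bar g_i)|\le C_l$ on the bundle (Lemma \ref{L204}) and $|\nabla^l\bar f_i|\le C_l$ (which follows from lifting Lemma \ref{L203}(c), since $\rho_i\colon FM_i\to M_i$ is a Riemannian submersion with totally geodesic fibers of bounded geometry) are used. Once these uniform higher-order bounds are in hand the averaging estimate is essentially mechanical, and the final statement follows by taking $l=[\ep^{-1}]$ and choosing $i$ large enough.
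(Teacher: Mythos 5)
Your proposal is correct and follows essentially the same route as the paper: the paper's justification is precisely the averaging of $\tilde g_i,\tilde f_i$ over the free $T^k$-action on the frame (or spin) bundle, descent via the commuting $G$-action, and the $C^l$-estimate from shrinking orbits with bounded geometry, citing \cite[Proposition 5.10]{CL21} and \cite{CFG92}. Your added detail on harmonic charts and the Taylor expansion is just an expansion of the same argument.
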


We conclude this section with the following result, based on prior results after taking a diagonal subsequence. This theorem will be used in the next section.

\begin{thm} \label{thm:dia}
$(M^n_i, g_i, f_i, p_i)$ be a sequence of Ricci shrinkers with $|\Rm(g_i)|\le A$ such that $\pi_1(M_i)=0$ and $\mmu( g_i) \to -\infty$. Then, by taking a subsequence if necessary, the following statements hold:
\begin{enumerate}[label=\textnormal{(\alph{*})}]
\item There exists a domain $U_i \subset M_i$ with an effective $T^k$-action. Moreover, $U_i$ is homotopy equivalent to $M_i$.

\item There exist $T^k$-invariant metric $g_i'$ and function $f_i'$ on $U_i$ such that
\begin{align*}
\sum_{l=0}^{i^{2}} \lc  |\na_{g_i}^l (g'_i- g_i) |_{g_i}+|\na_{g_i}^l (f'_i- f_i)|_{ g_i} \rc \le i^{-2}.
\end{align*}

\item We have
\begin{align} \label{eq:conv5}
	(U_i, g'_i,f'_i , p_i) \longright{\text{pointed Gromov-Hausdorff}} (X, d_X, f_X,p_X),
\end{align}
where $(X, d_X)$ is a simply connected geodesic space satisfying Proposition \ref{prop:po2}. Moreover, $X=\bigcup_i \Omega^X_i$, where $\Omega^X_i \subset \Omega^X_{i+1}$, and each $\Omega^X_i$ is homeomorphic to $X$.

\item There exists a singular fibration $\Phi_i: U_i \to \Omega^X_i$ such that the singular fibers are the orbits of the $T^k$-action. Moreover, given $r \ge 1$, there exists a constant $C>1$ independent of $i$ such that any fiber of $\Phi_i$, which is contained in $\{f_i' \le r^2\}$, has the second fundamental form bounded by $C$ and normal injectivity radius bounded below by $C^{-1}$.

\item There exists a simply connected smooth manifold $F_i$, which is the $\mathrm{SO}(n)$ or $\mathrm{Spin}(n)$ bundle of $(U_i, g_i')$. Moreover, $F_i$ admits a $(G \times T^k)$-action, where $G=\so$ or $\spin$, such that both the sub-$T^k$-action and sub-$G$-action are both free. Additionally, there exists a canonical $G \times T^k$-invariant metric $\tilde g_i'$ and function $\tilde f_i'$ on $F_i$, such that $f_i' \circ \rho_i=\tilde f_i'$, where $\rho_i: (F_i,\tilde g_i') \to (U_i ,g_i')$ is a Riemannian submersion, and the induced fiber metric is a fixed standard bi-invariant metric on $G$.

\item We have
\begin{align} \label{eq:conv6}
	(F_i, \tilde g'_i, G,\tilde f'_i ,\tilde p'_i) \longright{\text{equivariant pointed Gromov-Hausdorff}} (Y, g_Y, G, f_Y,p_Y),
\end{align}
where $(Y, g_Y)$ is a smooth, simply connected, Riemannian manifold such that $(Y,g_Y)/G$ is isometric to $(X, d_X)$. Additionally, $Y=\bigcup_i \Omega^Y_i$, where $\Omega^Y_i \subset \Omega^Y_{i+1}$, and each $\Omega^Y_i$ is $G$-diffeomorphic to $Y$.

\item There exists a $G$-equivariant fibration $\tilde \Phi_i: F_i \to \Omega^Y_i$ that coincides with the principal bundle induced by the $T^k$-action. Furthermore,
\begin{enumerate}[label=\textnormal{(\roman{*})}]
\item For any horizontal vector $\zeta$, 
\begin{align*}
(1-i^{-2}) |\zeta| \le |\mathrm{d}\tilde \Phi_i(\zeta)| \le (1+i^{-2}) |\zeta|.
\end{align*}

\item Each fiber of $\tilde \Phi_i$ has diameter bounded by $i^{-2}$. 

\item Given $r \ge 1$, there exists a constant $C>1$ independent of $i$ such that any fiber of $\tilde \Phi_i$, which is contained in $\{\tilde f_i' \le r^2\}$, has the second fundamental form bounded by $C$ and normal injectivity radius bounded below by $C^{-1}$.
\end{enumerate}

\item The following commutative diagram holds:
\[\begin{tikzcd} 
{(F_i, \tilde g'_i)} && {( \Omega^Y_i, g_Y)} \\
\\
{(U_i, g'_i)} && {(\Omega^X_i, d_X)}
\arrow["\Phi_i", from=3-1, to=3-3]
\arrow["{\rho}", from=1-3, to=3-3]
\arrow["{\rho_i}", from=1-1, to=3-1]
\arrow["\tilde \Phi_i", from=1-1, to=1-3]
\end{tikzcd}\]
\end{enumerate}
\end{thm}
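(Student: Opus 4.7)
The plan is a diagonal subsequence argument that packages together the results established earlier in Section 3 into a single coherent statement on exhausting domains. I would first extract the pointed Gromov-Hausdorff convergence \eqref{eq:conv1} to a limit $(X, d_X, f_X, p_X)$ and the equivariant frame-bundle convergence \eqref{eq:conv2} to $(Y, g_Y, \so, f_Y, p_Y)$; under the hypothesis $\mmu(g_i) \to -\infty$, Lemma \ref{L202} forces the Hausdorff dimension of $X$ to be strictly less than $n$, so the convergence is genuinely collapsing and Proposition \ref{prop:po2} endows $X$ with its orbifold stratification.

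Next, I would choose an increasing sequence $s_j \to \infty$ with $s_1 \ge L(n,A)$, where $L$ is the common constant from Lemma \ref{lem:topo2}, Proposition \ref{prop:topo3}, and Proposition \ref{prop:topo4}. For each fixed $j$ and all sufficiently large $i$, Theorem \ref{thm:fib} produces an $\so$-equivariant fibration $\tilde\Phi_{i,j} : \tilde U_i(s_j) \to \tilde\Omega(s_j)$. Since $\pi_1(M_i)=0$, one has $\pi_1(FM_i) \in \{0, \Z_2\}$; Propositions \ref{prop:topo4} and \ref{prop:topo5} (the latter applied to the $\spin$-cover when $\pi_1(FM_i)=\Z_2$) upgrade the nilmanifold fibers of $\tilde\Phi_{i,j}$ to a torus $T^k$ and realize $\tilde\Phi_{i,j}$ as a principal $T^k$-bundle commuting with the $G$-action, where $G \in \{\so, \spin\}$. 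This descends to an effective $T^k$-action on $U_i(s_j)$ whose orbits are the singular fibers of $\Phi_{i,j}$, with no fixed points for large $i$ by the injectivity-radius argument recorded after Proposition \ref{prop:topo5}. Proposition \ref{prop:ave}, applied with $\ep = i^{-2}$, then yields $T^k$-invariant $(g^{(j)}_i, f^{(j)}_i)$ on $U_i(s_j)$ within $C^{i^{2}}$-distance $i^{-2}$ of $(g_i, f_i)$; since the averaging is carried out on the frame bundle before descending, it lifts canonically to $(G\times T^k)$-invariant $(\tilde g^{(j)}_i, \tilde f^{(j)}_i)$ whose $G$-quotient recovers $(g^{(j)}_i, f^{(j)}_i)$.

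The diagonal step is then standard: for each $i$ pick $j(i) \to \infty$ slowly enough that all the conclusions above hold on $\tilde U_i(s_{j(i)})$ with the quantitative errors in items (b) and (g)(i)--(ii) controlled by $i^{-2}$. Setting $U_i := U_i(s_{j(i)})$, $\Omega^X_i := \Omega(s_{j(i)})$, $\Omega^Y_i := \tilde\Omega(s_{j(i)})$, $F_i$ the $G$-frame bundle over $U_i$, and $(g'_i, f'_i)$, $(\tilde g'_i, \tilde f'_i)$ the corresponding averaged data, items (a)--(h) follow by matching each claim to its source: (a) from Proposition \ref{prop:topo3}; (b) from Proposition \ref{prop:ave}; (c) from Lemma \ref{lem:topo2}(b), noting that simple connectedness of $X$ passes from that of $M_i$ through the singular fibration $\Phi_i$ with connected fibers; (d) and (g)(iii) from Lemma \ref{lem:sinfi}; (e), (f), (g)(i)--(ii), and (h) by combining the equivariant convergence \eqref{eq:conv2} (or \eqref{eq:conv4} in the $\spin$ case) with the commutativity of the $T^k$- and $G$-actions.

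The main technical obstacle is scale compatibility: Propositions \ref{prop:topo4} and \ref{prop:topo5} \emph{a priori} produce a $T^k$-action on each $U_i(s_j)$ that could depend on $j$, whereas the theorem demands a single globally defined action on $U_i$. This is where the intrinsic nature of the collapsing direction is essential. The center $C(N)$ of the structure nilpotent group from Theorem \ref{thm:fib}(c)---which, in the simply connected regime, collapses to the entire torus factor $T^k$---is determined by the limit $Y$, not by $s_j$. Together with the flatness of the fiber connections preserved by the $G$-action, this forces the $T^k$-actions at adjacent scales to agree on their overlaps, so a single $T^k$-action extends to the exhausting domain $U_i$. Once this coherence is established, the diagonal selection closes.
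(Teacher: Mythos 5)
Your proposal is correct and follows essentially the same route as the paper, which itself obtains Theorem \ref{thm:dia} precisely by packaging Theorem \ref{thm:fib}, Propositions \ref{prop:topo3}--\ref{prop:topo5}, Proposition \ref{prop:ave}, Lemmas \ref{lem:topo2} and \ref{lem:sinfi}, and the convergences \eqref{eq:conv1}, \eqref{eq:conv2}, \eqref{eq:conv4} via a diagonal choice of $s_i \to \infty$. The ``scale compatibility'' issue you raise in the last paragraph is not actually needed: each $U_i$ is defined at the single scale $s_{j(i)}$, and the theorem only requires the $T^k$-action induced by the one fibration $\Phi_i$ at that scale, so no agreement of actions across different $s_j$ on a fixed $M_i$ has to be established.
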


The limit space in \eqref{eq:conv5} coincides with the limit space in \eqref{eq:conv1}. Notice that by taking a subsequence, for some $s_i \to \infty$, $F_i$ in proposition \ref{thm:dia} (e) is either $\tilde{U}_i(s_i)$ or $\hat U_i(s_i)$, depending on whether $\pi_1(FM_i)=0$ or $\pi_1(FM_i)=\Z_2$. In the first case, $G=\so$, and the limit space in \eqref{eq:conv6} coincides with the limit space in \eqref{eq:conv2}. In the second case, $G=\spin$, and the limit space in \eqref{eq:conv6} coincides with the limit space in \eqref{eq:conv4}. The fact that $Y$ and $X$ are simply connected follows from \cite[Chapter II, Corollary 6.3]{Bre72}.

\section{Proof of the main theorem}

The main goal of this section is to prove Theorem \ref{thm:001}. Once Theorem \ref{thm:001} is established, Corollary \ref{cor:001} follows directly from Proposition \ref{prop:smooth}. 

The strategy is based on a contradiction argument. Let $(M_i, g_i, f_i, p_i) \in \mathcal E(n, A)$ be a sequence of Ricci shrinkers such that each $M_i$ is simply connected and has a finite second homotopy group. We aim to show that, under the assumption $\mmu(g_i) \to -\infty$, we can derive a contradiction.

Since each $M_i$ is simply connected, we assume the sequence satisfies the conclusions of Theorem \ref{thm:dia}. In particular, the objects $U_i$, $g_i'$, $f_i'$, $F_i$, $\tilde g_i'$, $\tilde f_i'$, and others are defined.

The additional assumption of a finite second homotopy group imposes strong restrictions on the topology of this sequence. Specifically, we have (see Definition \ref{dfn:205}):

\begin{prop} \label{prop:topo401}
For any $i<j$, there exists a weakly $(G \times T^k)$-diffeomorphism $\tilde h: F_i \to F_j$ that is $G$-equivariant. Moreover, the following diagram commutes:
\[\begin{tikzcd}
{F_i} && {F_j} \\
{ \Omega^Y_i} & Y & {\Omega^Y_j}
\arrow["{\tilde h}", from=1-1, to=1-3]
\arrow["{\tilde \Phi_i}", from=1-1, to=2-1]
\arrow["{\tilde \Phi_j}", from=1-3, to=2-3]
\arrow["{\tau_i}", from=2-1, to=2-2]
\arrow["{\tau_j}"', from=2-3, to=2-2]
\end{tikzcd}\]
Here, $\tau_i$ and $\tau_j$ are $G$-diffeomorphisms obtained from Theorem \ref{thm:dia} (f).
\end{prop}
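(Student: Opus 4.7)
The plan is to identify each $F_i$ with a principal $T^k$-bundle over $Y$ via the $G$-diffeomorphism $\tau_i \colon \Omega^Y_i \to Y$ from Theorem \ref{thm:dia}(f), and then to classify these bundles through their Euler classes. The key input is that finiteness of $\pi_2(M_i)$ rigidifies the bundle structure up to an automorphism of $T^k$.

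First I exploit two homotopy exact sequences. From the $T^k$-principal bundle $\tilde \Phi_i$, the simple-connectedness of $F_i$ (Theorem \ref{thm:dia}(e)) combined with $\pi_2(T^k)=0$ yields
\[
0 \longrightarrow \pi_2(F_i) \longrightarrow \pi_2(Y) \longrightarrow \pi_1(T^k)=\Z^k \longrightarrow 0,
\]
and the connecting map coincides, via Hurewicz $\pi_2(Y) \cong H_2(Y;\Z)$, with evaluation of the Euler class $e_i \in H^2(Y;\Z^k) \cong \mathrm{Hom}(\pi_2(Y),\Z^k)$ (using $H_1(Y;\Z)=0$ to kill the Ext term). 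Next, from $G \to F_i \to U_i$ with $\pi_2(G)=0$, $\pi_1(G) \in \{1,\Z_2\}$, and $\pi_2(U_i) \cong \pi_2(M_i)$ finite (hypothesis together with Theorem \ref{thm:dia}(a)), I obtain that $\pi_2(F_i)$ is finite. Hence $e_i$ is a surjection onto $\Z^k$ whose kernel equals the torsion subgroup of $\pi_2(Y)$; in particular $\pi_2(Y)$ has rank exactly $k$ and this kernel is intrinsic, independent of $i$. It follows that for any two indices $i, j$ the Euler classes $e_i, e_j$ differ by an automorphism $\alpha \in \mathrm{GL}(k,\Z) = \mathrm{Aut}(T^k)$.

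The standard classification of principal $T^k$-bundles over the simply-connected manifold $Y$ by Euler class then produces a weakly $T^k$-equivariant bundle isomorphism $\tilde h_0 \colon F_i \to F_j$, with twisting automorphism $\alpha$, covering $\tau_j^{-1} \circ \tau_i$. The remaining and most delicate step is to promote $\tilde h_0$ to a $G$-equivariant map $\tilde h$. Since $\tau_i, \tau_j$ are $G$-diffeomorphisms, each conjugate $\tilde h_\xi(x) := \xi^{-1} \cdot \tilde h_0(\xi \cdot x)$ with $\xi \in G$ is a weakly $T^k$-equivariant lift of the same $G$-equivariant base map. Any two such lifts differ by a smooth $T^k$-valued function on $Y$, and simple-connectedness of $Y$ (so $H^1(Y;\Z^k)=0$) permits lifting these to $\t^k$-valued functions; averaging over the compact group $G$ in $\t^k$ and then exponentiating yields a $G$-invariant $\tilde h$, which is automatically weakly $(G \times T^k)$-equivariant with automorphism acting as the identity on $G$ and as $\alpha$ on $T^k$. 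The main obstacle will be carrying out this averaging rigorously: choosing a smooth family of $\t^k$-lifts depending consistently on $\xi \in G$, ensuring the averaged map remains a diffeomorphism, and verifying that the weak $T^k$-equivariance with the same $\alpha$ is preserved throughout.
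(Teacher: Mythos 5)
Your proposal is correct and follows essentially the same route as the paper's proof (which adapts Key Lemma 2.6 of Petrunin--Tuschmann): finiteness of $\pi_2(F_i)$ via the $G$-bundle sequence, the exact sequence $\pi_2(F_i)\to\pi_2(Y)\to\pi_1(T^k)\to\pi_1(F_i)=0$ together with the classification $H^2(Y,\Z^k)\cong\mathrm{Hom}(\pi_2(Y)/\mathrm{tor},\Z^k)$ to get a weakly $T^k$-equivariant isomorphism covering $\tau_j^{-1}\circ\tau_i$, and then averaging over $G$ to enforce $G$-equivariance. The only cosmetic difference is that the paper averages the $T^k$-valued discrepancy $\eta(\xi,x)$ directly on $F_i$, whereas you lift the discrepancies to $\mathfrak t^k$-valued functions on $Y$ before averaging; both implement the same cocycle-averaging argument.
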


\begin{proof}
By Theorem \ref{thm:dia} (a), $U_i$ is simply connected and has a finite second homotopy group. From Theorem \ref{thm:dia} (e), we know there is a fiber bundle $G \to F_i \to U_i$. Using the homotopy exact sequence, we obtain:
\begin{align*}
\pi_2(G)=0 \longrightarrow \pi_2(F_i) \longrightarrow \pi_2( U_i) \longrightarrow \pi_1(G),
\end{align*}
where the first equality holds because $G=\so$ or $\spin$, and $\pi_2(\so)=\pi_2(\spin)=0$. Consequently, $\pi_2(F_i)$ is finite. The same conclusion holds for $F_j$.

By Theorem \ref{thm:dia} (f) and (g), $F_i/T^k$ (resp. $F_j/T^k$) is $G$-diffeomorphic to $\Omega^Y_i$ (resp. $\Omega^Y_j$) through the map $\tilde \Phi_i$ (resp. $\tilde \Phi_j$). Since both $\Omega^Y_i$ and $\Omega^Y_j$ are $G$-diffeomorphic to $Y$ via $\tau_i$ and $\tau_j$, respectively, the proof follows from \cite[Key Lemma 2.6]{PT99}.

For the reader's convenience, we provide a sketch of the proof. The homotopy exact sequence of the bundle of $T^k \to F_i \to Y$ gives:
\begin{align}\label{eq:402}
\pi_2(F_i) \longrightarrow \pi_2(Y) \longright{e} \pi_1(T^k) \longrightarrow \pi_1(F_i)=0.
\end{align}
Since $Y$ is simply connected, all principal $T^k$-bundles over $Y$ are classified by elements in $H^2(Y,\Z^k)=\mathrm{Hom}(\pi_2(Y)/\mathrm{tor}, \Z^k)$. The map $e$ in \eqref{eq:402} (called the Euler class) is an element of $\mathrm{GL}(\Z, k)$ and determines the principal bundle. In other words, any two principal $T^k$-bundles over $Y$ differ by an automorphism of $T^k$ (see also \cite[Lemma 3.4]{FR99}). 

Consequently, we obtain the following commutative diagram:
\[\begin{tikzcd} 
{(F_i, T^k)} && {( F_j, T^k)} \\
\\
{(\Omega^Y_i, G)} && {(\Omega^Y_j, G)}
\arrow["h", from=3-1, to=3-3]
\arrow["{\tilde \Phi_j}", from=1-3, to=3-3]
\arrow["{\tilde \Phi_i}", from=1-1, to=3-1]
\arrow["h^*", from=1-1, to=1-3]
\end{tikzcd}\]
Here, $h=\tau_j^{-1} \circ \tau_i$, and $h^*$ is a weakly $T^k$-diffeomorphism. It is straightforward to show that for any $x \in F_i$ and $\xi \in G$, both $h^*(\xi x)$ and $\xi h^*(x)$ lie in the same $T^k$-orbit in $F_j$. Thus, we can define a map $\eta: G \times F_i \to T^k$ such that 
\begin{align*}
h^*(\xi x)=\eta(\xi ,x)\cdot \xi h^*(x).
\end{align*}

For any $x \in F_i$, let $\omega(x)$ denote the mean value of $\eta(\xi , x)$ over $\xi \in G$. It is easy to verify that $\omega(x)$ is $T^k$-invariant. Defining $\tilde h(x)=\omega(x) h^*(x)$, we obtain a weakly $(G \times T^k)$-diffeomorphism from $F_i$ to $F_j$ that is also $G$-equivariant.
\end{proof}

As a consequence of Proposition \ref{prop:topo401}, there exists a smooth $(G \times T^k)$-manifold $F$, a principal $T^k$-bundle $\tilde \Phi: F \to Y$ corresponding to the $T^k$-action, and a weak $(G \times T^k)$-diffeomorphism $\phi_i: F \to F_i$ that is $G$-invariant such that the following diagram commutes:
\[\begin{tikzcd} 
{F} && {F_i} \\
\\
{Y} && {\Omega^Y_i}
\arrow["\tau_i", from=3-3, to=3-1]
\arrow["{\tilde \Phi_i}", from=1-3, to=3-3]
\arrow["{\tilde \Phi}", from=1-1, to=3-1]
\arrow["\phi_i", from=1-1, to=1-3]
\end{tikzcd}\]

We set $U=F/G$, and the map $\phi_i$ induces the following commutative diagram:
\[\begin{tikzcd} 
{F} && {F_i} \\
\\
{U} && {U_i}
\arrow["\bar \phi_i", from=3-1, to=3-3]
\arrow["{\mathrm{mod}\ G}", from=1-3, to=3-3]
\arrow["{\mathrm{mod}\ G}", from=1-1, to=3-1]
\arrow["\phi_i", from=1-1, to=1-3]
\end{tikzcd}\]
Here, $\bar \phi_i$ is the map induced by $\phi_i$. For later applications, we define
\begin{align*} 
\bar g_i:=\bar \phi_i^* g'_i, \quad \bar f_i:=\bar \phi_i^* f'_i,\quad \tilde g_i:=\phi_i^* \tilde g'_i \quad \text{and} \quad \tilde f_i:=\phi_i^* \tilde f'_i.
\end{align*}
In the following, we will only consider the fixed $(G \times T^k)$-action on $F$. Note that both $\tilde g_i$ and $\tilde f_i$ are invariant under this $(G \times T^k)$-action. Additionally, $\bar g_i$ and $\bar f_i$ are invariant under the $T^k$-action induced on $U$.

We fix a locally finite open cover $\{V_{\alpha}\}$ of $Y$ such that:
\begin{enumerate}
\item $V_{\alpha}$ is a geodesic ball with center $y_{\alpha}$ and radius $r_\alpha $.

\item $V_{\alpha}$ is convex with compact closure.

\item There exists a trivialization of $\tilde \Phi: F \to Y$ given by
\begin{align} \label{eq:402xxa}
\tilde \Phi^{-1}(V_{\alpha}) \longright{\varphi_{\alpha}} T^k \times V_{\alpha},
\end{align}
such that $\varphi_{\alpha}(tx)=t \varphi_{\alpha}(x)$ for any $x \in \tilde \Phi^{-1}(V_{\alpha})$ and $t \in T^k$. 
\end{enumerate}

In the following, we may shrink $\{V_{\alpha}\}$ when necessary, ensuring that the new cover still satisfies the above conditions. This process will not be explicitly mentioned if there is no confusion.

The transition map $\phi_{\beta \alpha}:=\varphi_{\beta} \circ \varphi^{-1}_{\alpha}$, defined on $T^k \times (V_{\alpha} \cap V_{\beta})$, is given by
\begin{align} \label{eq:402a}
\phi_{\beta \alpha}(t, x)=(s_{\beta \alpha}(x) t, x),
\end{align}
for any $(t,x) \in T^k \times (V_{\alpha} \cap V_{\beta})$, where $s_{\beta \alpha}(x): V_\alpha \cap V_\beta \to T^{k}$ satisfies $s_{\alpha \alpha } = \mathrm{id},$ and $s_{\alpha \gamma }s_{\gamma \beta }s_{\beta \alpha } = \mathrm{id}$ on $V_\alpha \cap V_\beta \cap V_\gamma $. 

Now, we fix a decomposition 
\begin{align} \label{eq:402xa}
T^k=S^1 \times T^{k-1}
\end{align}
and write $t=(t',t'') \in S^1 \times T^{k-1}$ for $t \in T^k$. Then, \eqref{eq:402a} becomes
\begin{align}\label{eq:402b}
\phi_{\beta \alpha}(t', t'' ,x)=(s'_{\beta \alpha}(x) t', s''_{\beta \alpha}(x) t'', x),
\end{align}
where we decompose $s_{\beta \alpha}=(s'_{\beta \alpha}, s''_{\beta \alpha})$. 

Using this, we can glue $\{S^1 \times V_{\alpha}\}$ with transition maps:
\begin{align} \label{eq:403}
\phi'_{\beta \alpha}(t', x)=(s'_{\beta \alpha}(x) t', x),
\end{align}
to form a principal $S^1$-bundle, which is precisely the quotient of $\tilde \Phi$ from $F/T^{k-1}$ to $Y$.

If we denote the quotient metric of $\tilde g_i$ and the quotient function of $\tilde f_i$ on $F/T^{k-1}$ by $ \tilde{g}_{i,k-1}$ and $  \tilde{f}_{i,k-1}$, respectively, we obtain the following result:

\begin{lem}\label{L401}
Given $L>1$, there exists a constant $C>1$, independent of $i$, such that
\begin{align}\label{eq:401aa}
\mathrm{Length}_{ \tilde{g}_{i,k-1}}(S^1) \le C \mathrm{diam}_{ \tilde{g}_{i,k-1}}(S^1),
\end{align}
where $S^1$ is any circle fiber of $F/T^{k-1} \to Y$ contained in $\{\tilde{f}_{i,k-1} \le L\}$.
\end{lem}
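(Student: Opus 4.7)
The plan is to interpret both the length and the diameter of each circle fiber of $F/T^{k-1}\to Y$ in terms of the geometry of the associated $T^k$-orbit in $(F,\tilde g_i)$, and then exploit the bounded geometry of those orbits guaranteed by Theorem \ref{thm:dia}(g). Given $x\in F$ with $\tilde f_i(x)\le L$, let $X_1,\dots,X_k$ be the infinitesimal generators of the $T^k$-action, chosen so that $X_1$ generates the $S^1$-factor of \eqref{eq:402xa} and $X_2,\dots,X_k$ generate $T^{k-1}$. Since $T^k$ is abelian and acts isometrically on $(F,\tilde g_i)$, the orbit $T^k\cdot x$ with the induced metric is a flat torus. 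Writing $X_1^h$ for the component of $X_1$ orthogonal to $\mathrm{span}(X_2,\dots,X_k)$, a direct calculation in this flat model yields
\[
\mathrm{Length}_{\tilde g_{i,k-1}}\bigl(\pi(S^1\cdot x)\bigr)=|X_1^h(x)|,\qquad d_{T^k\cdot x}\bigl(x,\,T^{k-1}\cdot(\tfrac12\cdot x)\bigr)=\tfrac12|X_1^h(x)|,
\]
where $\pi\colon F\to F/T^{k-1}$ is the projection. The first equality identifies the length to be bounded; the second is the \emph{intrinsic} analogue of the distance that I will use to lower-bound the diameter.

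The crux is to pass from this intrinsic identity to an extrinsic estimate. By the definition of the quotient metric, $d_{\tilde g_{i,k-1}}(\pi(x),\pi(\tfrac12 x))=d_{\tilde g_i}\bigl(x,T^{k-1}\cdot(\tfrac12 x)\bigr)$, so this is a lower bound on the diameter of the fiber. Theorem \ref{thm:dia}(g) furnishes a constant $K=K(n,A,L)$, independent of $i$, such that on $\{\tilde f_i\le L\}$ the orbit $T^k\cdot x$ has second fundamental form bounded by $K$ and normal injectivity radius at least $K^{-1}$, and its extrinsic diameter is at most $i^{-2}$. For $i$ large, any closest point $z^*\in T^{k-1}\cdot(\tfrac12 x)$ to $x$ satisfies $d_{\tilde g_i}(x,z^*)\le \tfrac12|X_1^h(x)|\le \tfrac12 i^{-2}\ll K^{-1}$, so the minimizing $F$-geodesic lies inside the normal tube. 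A standard tubular-neighborhood comparison (lift the $F$-geodesic through the normal exponential map of $T^k\cdot x$ and apply Jacobi-field bounds) then gives $d_{T^k\cdot x}(x,z^*)\le 2\,d_{\tilde g_i}(x,z^*)$. Combining this with $d_{T^k\cdot x}(x,z^*)\ge d_{T^k\cdot x}\bigl(x,T^{k-1}\cdot(\tfrac12 x)\bigr)=\tfrac12|X_1^h(x)|$ produces
\[
\mathrm{diam}_{\tilde g_{i,k-1}}\bigl(\pi(S^1\cdot x)\bigr)\ge d_{\tilde g_i}(x,z^*)\ge \tfrac14|X_1^h(x)|=\tfrac14\,\mathrm{Length}_{\tilde g_{i,k-1}}\bigl(\pi(S^1\cdot x)\bigr),
\]
so that $C=4$ works for every $i$ beyond a threshold depending on $n,A,L$.

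For the finitely many remaining indices I would argue by compactness. Since $f_Y$ is proper by \eqref{eq:id00} and $\tilde f_{i,k-1}$ is close to $f_Y$ via the convergence in \eqref{eq:conv6}, the region $\{\tilde f_{i,k-1}\le L\}\subset F/T^{k-1}$ has compact closure; and because the $T^k$-action on $F$ is free, both $|X_1^h|$ and the corresponding extrinsic distance from $x$ to $T^{k-1}\cdot(\tfrac12 x)$ are strictly positive continuous functions of the base point on $Y$, so their ratio is bounded on this compact region for each fixed $i$. Taking $C$ to be the maximum of $4$ and these finitely many bounds yields the required $i$-independent constant. The main technical step is the tubular-neighborhood comparison between extrinsic and intrinsic distance on the $T^k$-orbit; this is standard, but one also has to verify that the bounds supplied by Theorem \ref{thm:dia}(g) transport faithfully along the weak diffeomorphism $\phi_i\colon F\to F_i$, which is automatic because $\tilde g_i=\phi_i^*\tilde g_i'$ makes $\phi_i$ an isometry by construction.
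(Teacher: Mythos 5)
Your proof is correct and rests on the same key input as the paper, namely the uniform bounds on the second fundamental form and normal injectivity radius of the $T^k$-fibers from Theorem \ref{thm:dia} (g), but the implementation is genuinely different. The paper quotients first: it asserts that each $S^1$-fiber of $F/T^{k-1}\to Y$ \emph{inherits} these bounds from the $T^k$-fiber upstairs and then compares the intrinsic and extrinsic metrics on the circle itself. You instead stay in $(F,\tilde g_i)$: you use the flatness of the $T^k$-orbit to identify the length of the quotient circle with $|X_1^h|$, exhibit the two points $\pi(x)$ and $\pi(\tfrac12\cdot x)$ whose quotient distance you bound below by $\tfrac14|X_1^h|$ via the intrinsic-versus-extrinsic comparison performed on the $T^k$-orbit, where the bounds of Theorem \ref{thm:dia} (g) apply directly, and you dispose of the finitely many small indices by compactness (legitimate, since the statement only requires a constant independent of $i$ along the fixed sequence). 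What this buys you is that you never have to justify the descent of the geometric bounds to the quotient $F/T^{k-1}$, a step the paper states without detail; what it costs is the explicit flat-torus computation and the antipodal-point device. Two minor points: the chain $d_{\tilde g_i}(x,z^*)\le \tfrac12|X_1^h(x)|\le \tfrac12 i^{-2}$ tacitly uses smallness of the \emph{intrinsic} size of the orbit, whereas all you need for the tube argument is $d_{\tilde g_i}(x,z^*)\le \mathrm{diam}_{\tilde g_i}(T^k\cdot x)\le i^{-2}$, which holds because both points lie on the fiber; and the per-$i$ compactness step can be stated more simply by noting that $F_i$ is compact and the induced $S^1$-action on $F_i/T^{k-1}$ is free, so for fixed $i$ the ratio of length to diameter is a continuous positive function on a compact set.
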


\begin{proof}
By Theorem \ref{thm:dia} (g), any $T^k$-fiber of $\tilde \Phi$, contained in $\{\tilde f_i \le L\}$, has a second fundamental form uniformly bounded above by a constant $C>1$ and a normal injectivity radius bounded below by $C^{-1}$, where $C$ is independent of $i$. 

Taking the quotient by $T^{k-1}$, each $S^1$-fiber of $F/T^{k-1} \to Y$, contained in $\{ \tilde{f}_{i,k-1} \le L\}$, inherits these properties: the second fundamental form remains uniformly bounded above by $C$, and the normal injectivity radius stays bounded below by $C^{-1}$.

As a result, the intrinsic metric on each circle fiber, induced by $\tilde{g}_{i,k-1}$, is uniformly comparable to $\tilde{g}_{i,k-1}$. Thus, the length of the circle fiber with respect to $\tilde{g}_{i,k-1}$ is uniformly comparable to its diameter with respect to $\tilde{g}_{i,k-1}$. This establishes \eqref{eq:401aa}.
\end{proof}

\begin{lem}\label{L401a}
For a given $\alpha$, there exists a sequence of $\R^k$-equivariant diffeomorphisms $\varphi_i: \R^k \times V_{\alpha} \to \R^k \times V_{\alpha}$ such that 
\begin{align*}
\varphi_i^* (\tilde{g}_i^{\alpha}) \longright{C^{\infty}} \tilde{h}^{\alpha},
\end{align*}
where $\tilde{g}_i^{\alpha}$ is the lift of $\tilde g_i$ to $\R^k \times V_{\alpha}$. Moreover, the metric $\tilde{h}^{\alpha}$ is invariant under the $\R^k$-action. 
\end{lem}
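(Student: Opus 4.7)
The plan is to exploit the $T^k$-invariance of $\tilde g_i$ to write it in Kaluza-Klein form on the trivialization over $V_\alpha$, lift to $\R^k \times V_\alpha$ via the universal covering of the fiber, and then conjugate by a linear rescaling of the $\R^k$-factor to offset the collapse. Under the trivialization \eqref{eq:402xxa}, the $T^k$-invariance of $\tilde g_i$ yields
\[
\tilde g_i\big|_{\tilde\Phi^{-1}(V_\alpha)} = h_{i,ab}(y)\,\theta_i^a\theta_i^b + g_{Y,i}^\alpha(y),
\]
where $\theta_i^a = dx^a + \omega_i^a(y)$ are the principal connection one-forms of the $T^k$-bundle, $h_i(y)$ is the fiber metric (a smooth field of symmetric positive-definite $k\times k$ matrices on $V_\alpha$), and $g_{Y,i}^\alpha$ is the horizontal base metric. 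The lift $\tilde g_i^\alpha$ has the same expression on $\R^k\times V_\alpha$ and is $\R^k$-invariant under translations in $v$.

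Choose $y_0 \in V_\alpha$ and set $P_i := h_i(y_0)^{1/2}$. Define $\varphi_i(v, y) := (P_i^{-1} v, y)$, which satisfies $\varphi_i(v+w, y) = \varphi_i(v, y) + (P_i^{-1} w, 0)$; thus $\varphi_i$ is $\R^k$-equivariant in the weak sense analogous to Definition \ref{dfn:205}(ii), for the Lie group automorphism $w \mapsto P_i^{-1}w$ of $\R^k$. A direct pullback gives
\[
\varphi_i^*\tilde g_i^\alpha = \tilde h_{i,ab}(y)\,(dv^a + \tilde\omega_i^a)(dv^b + \tilde\omega_i^b) + g_{Y,i}^\alpha(y),
\]
with $\tilde h_i(y) := P_i^{-1}h_i(y)P_i^{-1}$ and $\tilde\omega_i := P_i\omega_i$. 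By construction $\tilde h_i(y_0) = I$, and the weak equivariance transfers the $\R^k$-invariance to $\varphi_i^*\tilde g_i^\alpha$.

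The $C^\infty$-convergence along a subsequence then follows from Theorem \ref{thm:dia}: (i) $g_{Y,i}^\alpha$ converges smoothly to $g_Y|_{V_\alpha}$ by the almost-Riemannian-submersion property in \ref{thm:dia}(g)(i); (ii) the uniform upper bound on fiber diameters in \ref{thm:dia}(g)(ii) together with the bounded second fundamental form of fibers in \ref{thm:dia}(g)(iii) forces the operator-norm comparability $c\,h_i(y_0) \le h_i(y) \le C\,h_i(y_0)$ across $V_\alpha$, and iterated application of the Killing identity for the $X_a$ under bounded curvature bootstraps this to uniform $C^\infty$ bounds on $\tilde h_i$, uniformly non-degenerate on $V_\alpha$; (iii) in a Coulomb-type gauge on the contractible $V_\alpha$, bounded total curvature controls $|F_i|_{h_i}$, hence $\tilde\omega_i = P_i\omega_i$ is uniformly $C^\infty$-bounded. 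Arzela-Ascoli then extracts a subsequence along which $\varphi_i^*\tilde g_i^\alpha \to \tilde h^\alpha$ in $C^\infty$, and the limit inherits $\R^k$-invariance.

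The main obstacle is the uniform operator-norm comparability of $h_i(y)$ to $h_i(y_0)$ together with the slow rotation of the principal directions of $h_i$ over $V_\alpha$. When the eigenvalues of $P_i$ decay anisotropically, conjugation by $P_i^{-1}$ amplifies small-eigendirection perturbations severely, so the $y$-variation of $h_i$ must already respect the anisotropic scaling set by $h_i(y_0)$; this is precisely what the Killing identities combined with the uniform normal injectivity radius of the fibers ensure, and it is the genuine place where the bounded curvature hypothesis on $\tilde g_i$ is used.
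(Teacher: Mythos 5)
Your route is genuinely different from the paper's: you decompose $\tilde g_i$ in Kaluza--Klein form and try to prove uniform $C^\infty$ bounds on each piece ($P_i^{-1}h_iP_i^{-1}$, $P_i\omega_i$, the base metric) directly, after an explicit anisotropic rescaling of the fibre coordinates. The paper never decomposes the metric: it observes that the lifts $\tilde g_i^{\alpha}$ have uniform bounds on $|\na^l \Rm|$, uses Theorem \ref{thm:dia} (g)(iii) together with \cite[Lemma 2.5]{NT08} to obtain a uniform positive lower bound on the injectivity radius of $\tilde g_i^{\alpha}$ at a lift of the center $y_{\alpha}$ (this is where the bounded second fundamental form and normal injectivity radius of the fibres enter), and then invokes Cheeger--Gromov convergence theory after shrinking $V_{\alpha}$, choosing the diffeomorphisms compatibly with the $\R^k$-symmetry so that the $\R^k$-invariance of $\tilde g_i^{\alpha}$ passes to the limit. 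In that scheme the uniform control of the fibre metric, the connection form and the base metric is a \emph{consequence} of the convergence, not an input; your scheme needs it as an input, and this is where the proposal has a genuine gap.

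Concretely, the gap is at the point you yourself flag as the ``main obstacle''. The $C^0$ comparability $c\,h_i(y_0)\le h_i(y)\le C\,h_i(y_0)$ does follow from the bounded second fundamental form of all fibres via a Gronwall argument, but the bootstrap to uniform higher-order bounds on $\tilde h_i$, and above all the claimed bound on the rescaled connection $P_i\omega_i$, are asserted rather than proved. Higher derivatives of $h_{i,ab}=\langle V^a,V^b\rangle$ involve $|\na V^a|$, whose horizontal--horizontal part is the O'Neill $A$-tensor, i.e. essentially $h_i(V^a,F_i(\cdot,\cdot))$; and the statement that ``bounded total curvature controls $|F_i|_{h_i}$ in a Coulomb-type gauge'' does not close: O'Neill's formula only expresses $|A|^2$ through the difference between base and total-space sectional curvatures, and the curvature of the $T^k$-quotient metric is not a priori bounded along the collapsing sequence, so the argument as sketched is circular. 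This missing estimate is exactly what the injectivity-radius/Cheeger--Gromov route is designed to avoid. Two further, fixable points: your $\varphi_i$ is only weakly $\R^k$-equivariant (with automorphism $P_i^{-1}$); since $h_i(y_0)\to 0$ some rescaling of the fibre directions is indeed unavoidable, but you should choose $P_i$ compatible with the splitting \eqref{eq:402xa} (e.g. a triangular rather than symmetric square root), so that the subsequent quotient by $\R^{k-1}$ and the gluing through the transition maps \eqref{eq:403} are not disturbed; and the limit you produce is only subsequential, which is harmless here but should be said.
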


\begin{proof}
By construction, $\tilde{g}_i^{\alpha}$ has uniformly bounded curvature as well as uniformly bounded higher-order derivatives of the curvature. Furthermore, it follows from Theorem \ref{thm:dia} (g) (iii) that, with respect to $\tilde{g}_i^{\alpha}$, the $\R^k$-fiber over $y_{\alpha}$, the center of $V_{\alpha}$, has a uniformly bounded second fundamental form, and the normal injectivity radius is bounded below by a uniform positive constant. Consequently, it is straightforward to show (see, e.g., \cite[Lemma 2.5]{NT08}) that the injectivity radius of $\tilde{g}_i^{\alpha}$ at the point $y'_{\alpha}$, a lift of $y_{\alpha}$, has a uniform positive lower bound independent of $i$.

Using standard arguments in convergence theory, and by shrinking $V_{\alpha}$ if necessary, there exists a self-diffeomorphism $\varphi_i:V_{\alpha} \to V_{\alpha}$ such that 
\begin{align*}
\varphi_i^* \lc \tilde{g}_i^{\alpha} \vert_{\vec{0} \times V_{\alpha}} \rc
\end{align*}
converges smoothly to a metric tensor $\tilde{h}^{\alpha}$ on $V_{\alpha}$. This map $\varphi_i$ can then be extended to a diffeomorphism from $\R^k \times V_{\alpha}$ to $\R^k \times V_{\alpha}$ in an $\R^k$-equivariant manner. 

Since each $\tilde{g}_i^{\alpha}$ is $\R^k$-invariant, it follows immediately that $\varphi_i^*(\tilde{g}_i^{\alpha})$ converges smoothly to an $\R^k$-invariant metric $\tilde{h}^{\alpha}$.
\end{proof}

In general, we cannot directly glue all $( \R^k \times V_{\alpha}, \tilde{h}^{\alpha})$ together due to the complexity of the obstructions. To overcome this, we consider the decomposition $\R^k \times V_{\alpha}=\R \times \R^{k-1} \times V_{\alpha}$ corresponding to \eqref{eq:402xa}. By taking the quotient by $\R^{k-1}$, we denote the resulting quotient metric of $\tilde{h}^{\alpha}$ on $\R \times V_{\alpha}$ by $\bar h^{\alpha}$. This formulation allows us to glue all $( \R \times V_{\alpha}, \bar h^{\alpha})$ together in the following way. Essentially, this process is equivalent to unwrapping the circle fibers in the fibration $F/T^{k-1} \to Y$. 

Let $\bar g_i^{\alpha}$ on $\R \times V_{\alpha}$ be the quotient metric of $\tilde{g}_i^{\alpha}$ by $\R^{k-1}$. While $\bar g_i^{\alpha}$ may not converge smoothly to $\bar h^{\alpha}$ due to the lack of curvature control for $\bar g_i^{\alpha}$, Lemma \ref{L401a} ensures the following weaker conclusion, which suffices for our purposes:
\begin{align} \label{eq:404a}
	(\R \times V_{\alpha} , \bar g_i^{\alpha}) \longright{\text{local Gromov-Hausdorff}} (\R \times V_{\alpha} , \bar h^{\alpha}).
\end{align}
Here, the local Gromov-Hausdorff convergence means that for any $y \in V_{\alpha}$, there exists $\ep>0$ such that for any $r \in \R$, the ball $B_{\bar h^{\alpha}}((r,y), \ep) \subset \R \times V_{\alpha}$, and 
\begin{align} \label{eq:404ax}
	\lc B_{\bar g_i^{\alpha}}((r,y), \ep), d_{\bar g_i^{\alpha}} \rc \longright{\text{Gromov-Hausdorff}} \lc B_{\bar h^{\alpha}}((r,y), \ep), d_{\bar h^{\alpha}} \rc,
\end{align}
where $d_{\bar g_i^{\alpha}}$ and $d_{\bar h^{\alpha}}$ are the distance functions of the length structures induced by $\bar g_i^{\alpha}$ and $\bar h^{\alpha}$, respectively.

Next, we identify the obstruction to unwrapping the circle fibers. First, we lift the transition map in \eqref{eq:403} to $\tilde \phi_{\beta \alpha}: \R \times (V_{\alpha} \cap V_{\beta}) \to \R \times (V_{\alpha} \cap V_{\beta})$ by
\begin{align*} 
\tilde \phi_{\beta \alpha}(r, x)=(r+\tilde s_{\beta \alpha}(x), x),
\end{align*}
for $(r, x) \in \R \times (V_{\alpha} \cap V_{\beta})$, where $\tilde s_{\beta \alpha}(x)$ is fixed smooth lift of $s'_{\beta \alpha}(x)$. Note that the lift is not unique, as any two lifts differ by a translation in $\Z$. 

For $V_{\alpha} \cap V_{\beta} \cap V_{\gamma} \ne \emptyset$, we define 
\begin{align} \label{eq:405}
\lambda_{\alpha \beta \gamma}:=\tilde s_{\alpha \gamma} + \tilde s_{\gamma \beta }+ \tilde s_{\beta \alpha}.
\end{align}
The value $\lambda_{\alpha \beta \gamma}(x)$, for $x \in V_{\alpha} \cap V_{\beta} \cap V_{\gamma}$, is a constant integer. The collection of all such $\{\lambda_{\alpha \beta \gamma}\}$ constitutes the Euler class of the fibration $F/T^{k-1} \to Y$ in the second \v{C}ech cohomology group $H^2(Y,\Z)$. 

The effect of the obstruction \eqref{eq:405}, under this metric $\bar g_i^{\alpha}$, is given by
\begin{align} \label{eq:406}
d_{\bar g_i^{\alpha}} \lc (r, x), (r+\lambda_{\alpha \beta \gamma},x) \rc \le |\lambda_{\alpha \beta \gamma}| \cdot \mathrm{Length}_i(S^1_x),
\end{align}
where $\mathrm{Length}_i(S^1_x)$ is the length of the circle over $x$ with respect to the quotient metric $\tilde{g}_{i, k-1}$ on $S^1 \times V_{\alpha} \subset F / T^{k-1}$. It follows from Lemma \ref{L401} that $\mathrm{Length}_i(S^1_x)$ tends to zero as the diameter of each $T^k$ with respect to $\tilde{g}_i$ converges to zero locally uniformly. Combining \eqref{eq:404a}, \eqref{eq:404ax}, and \eqref{eq:406}, we conclude that the obstruction vanishes.

Thus, by taking a further subsequence, we can glue all the pieces $(\R \times V_{\alpha}, \bar h^{\alpha})$ to obtain an $\R$-bundle $(Z, g_Z)$ over $Y$. Topologically, $Z$ is diffeomorphic to $\R \times Y$ because $Y$ is simply connected.

Since $g_Z$ is locally identified with $\bar h^{\alpha}$, it is clear that $g_Z$ is invariant under the $\R$-action. Similarly, one can lift $\tilde f_i$ to obtain $\bar f_i^{\alpha}$ on $\R \times V_{\alpha}$, and by taking the limit, we obtain a smooth function $f_Z$, which is also invariant under the $\R$-action.

Next, we show that $Z$ is equipped with a $G$-action that commutes with the $\R$-action. Since the $G$-action descends to $F/T^{k-1}$, there exists a decomposition $G=G_0 \cup \cdots \cup G_l$, so that for each $0\leq j\leq l$, $G_j$ is a small contractible open set, and there exists a refined cover $\{V^j_{\alpha'}\}$ of $\{V_{\alpha}\}$ such that for any $\xi \in G_j$ and $(t, x) \in S^1 \times V^j_{\alpha'}$, $\xi (t, x)$ lies within some $S^1 \times V_{\alpha}$. 

For $\xi \in G_j$ and $(t, x) \in S^1 \times V^j_{\alpha'}$, we can write
\begin{align*}
\xi (t, x)=(\phi_\xi (x) t, \psi_\xi (x)),
\end{align*}
where $\phi_\xi (x) \in S^1$. Using this, we define the action of $G_j$ on $\R \times V^j_{\alpha'}$ as
\begin{align*} 
\xi (r, x)=(\tilde\phi_\xi (x)+r, \psi_\xi (x)),
\end{align*}
for $\xi  \in G_j$ and $(r, x) \in \R \times V^j_{\alpha'}$, where $\tilde \phi_\xi (x)$ is a fixed smooth lift of $\phi_\xi (x)$.

In this way, for each element in $G_j$, we define a map:
\begin{align*}
\coprod_{\alpha'} \R \times V^j_{\alpha'} \to\coprod_{\alpha} \R \times V_{\alpha}.
\end{align*}
In general, these maps cannot always be glued into a global map $Z \to Z$ due to potential inconsistencies across different pieces. However, as in the argument above, it can be shown that, by taking the limit, all discrepancies vanish. Consequently, we obtain an action of $G_j$ on $Z$. 

Repeating this process for each $1 \le j \le l$, we define an action of $G$ on $Z$. Moreover, it is straightforward to verify that this is indeed a group action: since the map on $F / T^{k-1}$ is a group action, any discrepancies arising from the lifting process tend to zero as we take the limit. 

By construction, the $G$-action is isometric with respect to $g_Z$ and commutes with the $\R$-action. Additionally, $f_Z$ is invariant under $G \times \R$.

Now, we define $\hat \iota : (Z,g_Z) \to (Y,g_Y)$, where $\hat \iota $ is a Riemannian submersion. For each $y \in Y$, we denote the $\R$-fiber in $Z$ over $y$ by $\R_y$. From Theorem \ref{thm:dia} (d) and our construction, it is clear that for any $y$ in a compact set of $Y$, $\R_y$ is an embedded submanifold, which has a uniformly bounded second fundamental form and a uniformly positive normal injectivity radius. Furthermore, since $g_Z$ is $\R$-invariant, each $\R_y$, with the induced metric from $g_Z$, is isometric to $(\R,g_E)$, the standard flat metric.

\begin{lem}\label{L402}
The Riemannian manifold $(Z,g_Z)$ is complete.
\end{lem}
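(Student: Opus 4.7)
The plan is to reduce the completeness of $(Z, g_Z)$ to that of the base $(Y, g_Y)$, exploiting the Riemannian submersion $\hat\iota : Z \to Y$ together with the $\R$-invariance of $g_Z$ in the local trivializations $(\R \times V_\alpha, \bar h^\alpha)$.

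First, I would verify that $(Y, g_Y)$ is complete. Since $(Y, g_Y)$ arises as the equivariant pointed Gromov-Hausdorff limit \eqref{eq:conv6} of the complete Riemannian manifolds $(F_i, \tilde g_i')$, and each $(F_i, \tilde g_i')$ is a proper metric space by Hopf-Rinow (using the uniform bound on $|\Rm(\tilde g_i')|$ from Lemma \ref{L204}), the limit $Y$ is a proper length space and in particular complete. Since $Y$ is smooth Riemannian, Hopf-Rinow then yields geodesic completeness of $(Y, g_Y)$.

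Next I would analyze the local structure. Over each $V_\alpha$, by construction $\hat\iota^{-1}(V_\alpha)$ is identified with $(\R \times V_\alpha, \bar h^\alpha)$, and $\bar h^\alpha$ is invariant under translation in the first factor. Writing
\begin{equation*}
\bar h^\alpha = a(y)\, dr^2 + 2\, dr \cdot \beta(y) + h(y)
\end{equation*}
for a smooth positive $a$, smooth 1-form $\beta$, and smooth Riemannian metric $h$ on $V_\alpha$, smoothness and uniform positive-definiteness give, for any compact subset $K \subset V_\alpha$, constants $0 < c_1 \le c_2$ such that
\begin{equation*}
c_1\,(dr^2 + g_Y|_K) \le \bar h^\alpha\big|_{\R \times K} \le c_2\,(dr^2 + g_Y|_K)
\end{equation*}
as quadratic forms on tangent vectors.

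Finally, given a Cauchy sequence $\{z_n\} \subset Z$, the projection $\{\hat\iota(z_n)\}$ is Cauchy in $(Y, g_Y)$ because $\hat\iota$ is $1$-Lipschitz, hence converges by completeness to some $y \in Y$. I would fix a chart $V_\alpha$ containing $y$ and a compact $K \subset V_\alpha$ with $y$ in its interior, and pass to a subsequence with $d_{g_Z}(z_n, z_{n+1}) < 2^{-n}$. Writing $z_n = (r_n, y_n)$ in the trivialization, once $2^{1-n} < d_{g_Y}(y_n, \partial K)$, the $1$-Lipschitz property of $\hat\iota$ forces any almost-minimizing curve from $z_n$ to $z_{n+1}$ of $g_Z$-length at most $2^{1-n}$ to have its base projection inside $K$, and therefore to lie entirely within $\R \times K$. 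The bi-Lipschitz bound then gives $|r_{n+1} - r_n| \le c_1^{-1/2}\cdot 2^{1-n}$, so $\{r_n\}$ is Cauchy in $\R$ and converges to some $r$; combined with $y_n \to y$ this yields $z_n \to (r,y) \in Z$.

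The main difficulty is ensuring that distance-realizing curves between successive terms remain inside a single trivialization chart, confining the analysis to a region where the bi-Lipschitz estimate applies. This prevents the $\R$-component of the Cauchy sequence from running off to infinity even when the coefficient $a(y)$ (length-squared of the $\R$-generator) might degenerate elsewhere in $Y$.
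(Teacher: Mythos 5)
Your argument is correct, and it reaches the conclusion by a mechanism different from the paper's. Both proofs start identically: since $\hat\iota$ is $1$-Lipschitz, a Cauchy sequence in $Z$ projects to a Cauchy sequence in the complete manifold $(Y,g_Y)$, giving a base limit $y$ (you justify completeness of $Y$, which the paper simply asserts). From there the paper controls the fiber direction extrinsically: it uses the uniformly bounded second fundamental form and uniformly positive normal injectivity radius of the fiber $\R_y$ (recorded just before the lemma, coming from Theorem \ref{thm:dia}) to project the sequence onto $\R_y$ via nearest-point projection, compares $d_Z$ with the intrinsic metric of $\R_y$ near a point, and concludes from the fact that $(\R_y,\,$induced metric$)$ is isometric to flat $\R$, hence complete. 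You instead work intrinsically in a single trivialization $(\R\times V_\alpha,\bar h^{\alpha})$: the $\R$-invariance of $\bar h^{\alpha}$ yields a uniform two-sided comparison with the product metric $dr^2+g_Y$ over a compact set $K\subset V_\alpha$, and the $1$-Lipschitz projection confines almost-minimizing curves between consecutive terms to $\R\times K$, so the $r$-coordinates form a Cauchy sequence in $\R$. Your route avoids entirely the fiber-geometry estimates (normal injectivity radius, second fundamental form) that the paper's projection argument rests on, at the cost of the curve-confinement step; the paper's version is shorter given that those estimates were already set up for later use (e.g.\ in the discussion preceding Lemma \ref{L402} and in Lemma \ref{L404}). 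Both are complete proofs.
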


\begin{proof}
Suppose $\{z_i\}$ is a Cauchy sequence in $Z$. Since $\hat \iota $ is distance-decreasing, $\{\hat \iota (z_i) \}$ is also a Cauchy sequence in $Y$. Because $Y$ is complete, we can assume without loss of generality that $\hat \iota (z_i) \to y \in Y$. 

Let $z_i'$ be the projection of $z_i$ to $\R_y$, which is well defined and unique since the normal injectivity radius of $\mathbb{R}_y$ is bounded below by a positive constant. It is clear that $\{z_i'\}$ is also a Cauchy sequence with respect to $d_Z$. By taking a subsequence if necessary, we may assume $d_Z(z_i',z_0') \le \ep$ for all $i$, where $\ep$ is a small constant to be determined later. 

Since $\R_y$ is an embedded submanifold, we can choose a small enough number $\epsilon >0$ to ensure that $d_Z$ and $d'$, the induced metric on $\R_y$, are comparable on $B_{Z}(z_0',\ep)$. Therefore, $z_i'$ converges to some $z_{\infty} \in \R_y$ under $d_Z$. Consequently, $z_i \to z_{\infty}$ in $Z$.
\end{proof}

\begin{lem}\label{L403}
For any $y \in Y$, the fiber $\R_y$ is noncompact.
\end{lem}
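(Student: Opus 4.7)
The plan is to argue by contradiction. Suppose for some $y \in Y$ the fiber $\R_y$ were compact in $Z$. By the structural discussion immediately preceding Lemma \ref{L402}, the fiber $\R_y$ is an embedded submanifold of $Z$, and the Riemannian metric induced from $g_Z$ makes it isometric to the standard flat line $(\R, g_E)$. Because $\R_y$ is embedded, its subspace topology inherited from $Z$ coincides with its intrinsic manifold topology, which is the topology of $\R$. Hence compactness of $\R_y$ as a subspace of $Z$ would force $\R$ itself to be compact, which is absurd.

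The one point that needs verification is that $\R_y$ is genuinely the injective image of the whole real line, i.e., that the $\R$-action on $Z$ is free and does not produce closed orbits of the form $\R/n\Z$. On each local trivialization $\R \times V_{\alpha}$ the $\R$-action is translation in the first factor, and the transition maps $\tilde\phi_{\beta\alpha}(r,x) = (r + \tilde s_{\beta\alpha}(x),x)$ commute with this translation, so the global action is well-defined and free. The only potential obstruction to injectivity of the orbit parameterization is the integer-valued cocycle $\{\lambda_{\alpha\beta\gamma}\}$ in \eqref{eq:405}, whose effect on extrinsic distances is controlled by \eqref{eq:406}. Since Lemma \ref{L401} shows that the circle-fiber lengths in $F/T^{k-1}$ tend to zero locally uniformly while $|\lambda_{\alpha\beta\gamma}|$ is a nonnegative integer, the identifications $(r,x) \sim (r+\lambda_{\alpha\beta\gamma},x)$ collapse in the limit, leaving no nontrivial periodicity along any orbit. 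Therefore the orbit map $\R \to \R_y$ is a bijection, and the conclusion follows.

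The genuine difficulty in the overall program lies in the construction of $Z$ itself, where one must show that the Euler-class obstruction to unwrapping truly vanishes in the local Gromov--Hausdorff limit; once that has been accomplished, the non-compactness of $\R_y$ is essentially formal and reduces, as above, to the observation that $\R$ is not a compact topological space.
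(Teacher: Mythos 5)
Your proof is correct, but it takes a genuinely different route from the paper's. You treat the lemma as a purely topological consequence of the construction of $Z$: once $Z$ is an honest $\R$-bundle over $Y$ (equivalently, once $\R_y$ is an embedded copy of $\R$, as asserted just before Lemma \ref{L402}), a compact fiber would be a compact space homeomorphic to $\R$, which is absurd. The paper instead argues metrically: it takes $z_i=t_iz_0$ with $t_i\to+\infty$, supposes a $d_Z$-convergent subsequence, and derives a contradiction from the comparability of intrinsic and extrinsic distances near the limit point, which comes from the uniform second fundamental form bound and the positive normal injectivity radius of the fibers (the same reasoning as in Lemma \ref{L402}). The metric argument delivers slightly more than the bare statement --- no sequence $t_iz_0$ with $t_i\to\infty$ can accumulate in $(Z,d_Z)$ --- and it is this non-accumulation, rather than noncompactness per se, that is effectively invoked in the proof of Lemma \ref{L404}, where one obtains a $d_Z$-bounded sequence along an orbit and needs a contradiction. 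Your argument yields this stronger form with one extra line (convergence in $d_Z$ is convergence in the manifold topology, and in a trivialization $\R\times V_\alpha$ containing the whole fiber the $\R$-coordinates of $t_iz_0$ diverge), so nothing is lost, but it is worth making that explicit if your version of the lemma is to plug into Lemma \ref{L404}. One small inaccuracy in your middle paragraph: the cocycle $\{\lambda_{\alpha\beta\gamma}\}$ of \eqref{eq:405} is an obstruction to choosing consistent lifts in the construction of $Z$, controlled via \eqref{eq:406} and Lemma \ref{L401}; it is not an identification $(r,x)\sim(r+\lambda_{\alpha\beta\gamma},x)$ imposed on $Z$. As you yourself observe in your last paragraph, once $Z$ has been built there is no residual periodicity to rule out, so that paragraph is redundant (and slightly misstated) rather than wrong.
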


\begin{proof}
Fix $z_0 \in \R_y$, and define $z_i=t_i  z_0$, where $t_i \in \R$ and $t_i \to +\infty$. If $\R_y$ is compact, we could assume $z_i \to z_{\infty} \in \R_y$ under $d_Z$. However, this would contradict the same reasoning used in the proof of Lemma \ref{L402}. Therefore, $\R_y$ must be noncompact.
\end{proof}

Now, we define $W=Z/G$ with the quotient metric $d_W$. Additionally, let $f_W$ on $W$ denote the quotient function of $f_Y$. Since the $G$-action and $\R$-action commute, the $\R$-action descends naturally to $W$. Moreover, this $\R$-action is isometric, and $f_W$ remains invariant under the $\R$-action. 

Because $G$ is compact, each $\R$-orbit in $W$ is isomorphic to $\R$. Consequently, $W$ can be viewed as an $\R$-bundle over $X$. Since $X$ is simply-connected, $W$ is homeomorphic to $\R \times X$. Let $\iota: W \to X$ be the quotient map. This relationship is illustrated in the following commutative diagram:
\[\begin{tikzcd} 
{(Z, g_Z ,f_Z)} && {(Y, g_Y,f_Y)} \\
\\
{(W,d_W,f_W)} && {(X, d_X, f_X)}
\arrow["\iota ", from=3-1, to=3-3]
\arrow["{\mathrm{mod}\ G}", from=1-3, to=3-3]
\arrow["{\mathrm{mod}\ G}", from=1-1, to=3-1]
\arrow["\hat{\iota }", from=1-1, to=1-3]
\end{tikzcd}\]

\begin{lem}\label{L404}
The space $(W, d_W)$ is complete. Moreover, each $\R$-orbit in $W$ is noncompact.
\end{lem}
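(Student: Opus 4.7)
The plan is to prove the two assertions separately, both leveraging the compactness of $G$ together with Lemmas \ref{L402} and \ref{L403}.

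For completeness, given a Cauchy sequence $\{w_i\}$ in $W$, I would lift it inductively to $Z$ by choosing $z_{i+1} \in \pi^{-1}(w_{i+1})$ so that $d_Z(z_i, z_{i+1}) = d_W(w_i, w_{i+1})$; such a minimizer exists because $G$ is compact and each $G$-orbit in $Z$ is compact. The resulting sequence $\{z_i\}$ is then Cauchy in $Z$, and Lemma \ref{L402} supplies a limit $z_\infty \in Z$, whose projection $\pi(z_\infty)$ is the required limit of $\{w_i\}$ in $W$.

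For noncompactness, fix $w \in W$, a lift $z \in \pi^{-1}(w)$, and set $y = \hat\iota(z)$. The $\R$-orbit through $w$ is $\pi(\R_y)$, and the natural map $r \mapsto \pi(r \cdot z)$ descends to an injection from $\R/\Lambda$ onto it, where
\[
\Lambda := \{r \in \R : r \cdot z \in G \cdot z\}.
\]
So it suffices to prove $\Lambda = \{0\}$. The commutativity of the $\R$- and $G$-actions makes $\Lambda$ a subgroup, and it is closed: if $r_n \in \Lambda$ with $g_n z = r_n z$ and $r_n \to r$, compactness of $G$ lets me pass to a subsequential limit $g_n \to g$, giving $g z = r z$. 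Hence $\Lambda$ is one of $\{0\}$, $c\Z$ (for some $c>0$), or $\R$.

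The case $\Lambda = \R$ would force $\R_y \subseteq G \cdot z$; but $G \cdot z$ is the continuous image of a compact group and therefore compact, contradicting Lemma \ref{L403}. The case $\Lambda = c\Z$ with $c>0$ is the key step: picking $g \in G$ with $gz = c z$ and iterating using the commutativity of the two actions yields $g^n z = nc \cdot z$ for all $n \in \Z$, so the set $\{nc \cdot z\}_{n \in \Z}$ sits inside the compact orbit $\overline{\langle g \rangle} \cdot z \subset G \cdot z$. On the other hand, $\hat\iota : Z \to Y$ is by construction an $\R$-principal bundle, so the fiber $\R_y$ is a closed embedded submanifold of $Z$ whose subspace topology coincides with its intrinsic $\R$-topology. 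Consequently $\{nc \cdot z\}_n$, being intrinsically divergent, admits no convergent subsequence in $Z$, contradicting its relative compactness inside $G \cdot z$. Thus $\Lambda = \{0\}$, and $\R \cdot w \cong \R$ is noncompact.

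The main technical point I expect to require care is precisely this topological embedding property of the $\R$-fibers, which is what converts intrinsic unboundedness into genuine non-precompactness in $Z$; everything else is direct manipulation of the structures already in place, namely the principal bundle $\hat\iota$, the commuting $\R$- and $G$-actions, and the completeness and orbit noncompactness of $Z$ supplied by Lemmas \ref{L402} and \ref{L403}.
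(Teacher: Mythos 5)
Your overall strategy (lift to $Z$, exploit compactness of $G$, and use the closed embeddedness of the $\R$-fibers) is close in spirit to the paper's, but both halves as written have gaps. For completeness: choosing $z_{i+1}$ with $d_Z(z_i,z_{i+1})=d_W(w_i,w_{i+1})$ only controls consecutive distances, and $d_Z(z_i,z_j)$ is then bounded merely by $\sum_{l=i}^{j-1} d_W(w_l,w_{l+1})$, which need not be small for a general Cauchy sequence; the lifted sequence can drift along the compact $G$-orbits (a holonomy-type phenomenon), so ``the resulting sequence $\{z_i\}$ is then Cauchy in $Z$'' does not follow. The standard repair is to pass first to a subsequence with $d_W(w_i,w_{i+1})\le 2^{-i}$, lift that (now Cauchy in $Z$ by summability), use Lemma \ref{L402}, and then note that a Cauchy sequence with a convergent subsequence converges. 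The paper simply invokes Lemma \ref{L402}, with this routine reduction implicit.

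For noncompactness, your subgroup analysis correctly rules out $\Lambda=\R$ (orbit a point) and $\Lambda=c\Z$ (orbit a circle) -- the iteration $g^nz=nc\cdot z$ and the embedded, closed fiber $\R_y$ are fine -- but the final step ``$\Lambda=\{0\}$, hence $\R\cdot w\cong\R$ is noncompact'' is exactly the point at issue and is not justified: injectivity of the continuous orbit map $\R\to W$ does not by itself prevent the image, with its subspace topology, from being compact (that a compact metric space cannot be a continuous bijective image of $\R$ is a nontrivial fact needing its own argument, e.g.\ via the closure of the isometric $\R$-action in $\mathrm{Isom}$ of the orbit plus a Baire-category/open-mapping argument). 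The efficient fix -- and the paper's actual proof -- is to apply your $c\Z$-mechanism directly to the compactness hypothesis: if the orbit were compact (hence bounded), then for $t_i\to+\infty$ one has $d_W(t_iw_0,w_0)\le C$, so by definition of the quotient metric there are $\xi_i\in G$ with $d_Z(t_iz_0,\xi_iz_0)\le C$; compactness of $G$ lets you replace $\xi_i$ by a fixed $\xi_\infty$, so the intrinsically divergent points $t_iz_0\in\R_y$ stay in a bounded set of $Z$, and (by Hopf--Rinow precompactness together with the closed embeddedness of $\R_y$, i.e.\ your own argument, or Lemma \ref{L403}) this is impossible. Note also that the paper's route yields unboundedness of the orbit, which is the form actually used in the second-variation argument later, whereas the $\Lambda$-detour, even once repaired, only gives noncompactness.
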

\begin{proof}
The first statement follows directly from Lemma \ref{L402}. 

To prove the second, fix a point $w_0 \in W$ with a lift $z_0 \in Z$. Consider a sequence $t_i \to +\infty$. Suppose, for contradiction, that $d_W(t_i w_0, w_0) \le C$ for some constant $C>0$. By the definition of the quotient metric, there exists a sequence $\xi _i \in G$ such that:
\begin{align*}
d_Z(t_i z_0, \xi _i z_0)=d_W(t_i  w_0, w_0) \le C.
\end{align*}
Since $G$ is compact, by taking a subsequence if necessary, we may assume $\xi _i \to \xi _{\infty} \in G$. Consequently, $d_Z(\xi _i z_0, \xi _{\infty}z_0) \to 0$. However, this implies that $d_Z(t_i  z_0, \xi _{\infty} z_0)$ is uniformly bounded for all $i$, which contradicts Lemma \ref{L403}, 

Thus, each $\R$-orbit in $W$ must also be noncompact.
 \end{proof}

At this stage, the Ricci shrinker equation \eqref{E100} becomes relevant. We aim to derive an equation for the metric $d_W$ on the regular part of $W$, which will ultimately lead to a contradiction. Note that $W$ is the orbit space of the $G$-action on $Z$, and hence the regular part of $W$ corresponds to the principal orbits. For any $z \in Z$, the isotropy group of $G$ at $z$ is contained in the isotropy group of $G$ at $\bar z$, the image of $z$ in $Y$. In particular, this implies that the principal isotropy group of $G$ on $Z$ is finite. The regular part of $W$ is denoted by $\mathcal R_W$, where the metric is given by $g_W$.

Before proceeding with the calculations, we recall the general definition of O’Neill tensors, introduced in \cite{One66}.

\begin{defn}
Let $(M, g_M)$ be a Riemannian manifold with a free, proper, and isometric action by an abelian Lie group $\mathbb T$. The quotient space is $(N, g_N)=(M, g_M)/\mathbb T$, and the projection $(M, g_M) \to (N, g_N)$ is a Riemannian submerison.

For any vector field $E$ on $M$, we can decompose it as the sum of its vertical part $E^v$ (tangent to the $\T$-orbits) and its horizontal part $E^h$ (orthogonal to the $\T$-orbits). The O'Neill tensors $A$ and $T$ are then defined as follows:
\begin{align*}
T(E,F):=(\na_{E^v} F^v)^h+(\na_{E^v} F^h)^v \quad \text{and} \quad A(E,F):=(\na_{E^h} F^h)^v+(\na_{E^h} F^v)^h,
\end{align*}
where $E$ and $F$ are arbitrary vector fields on $M$, and $\na$ is the Levi-Civita connection of $g_M$.
\end{defn}

We fix a basis $\{V^a\}$ of the Lie algebra $\mathfrak t$ of $\mathbb T$, which induces fundamental vector fields, also denoted by $\{V^a\}$, on $M$. Specifically, for any $x \in M$, the vector $V^a$ at $x$ is defined as 
\begin{align*}
\left. \frac{d}{dt} \right\vert_{t=0}\exp(tV^a)\cdot x.
\end{align*}

Additionally, we choose a local basis $\{X^i=\partial_{x_i}\}$ on $N$, which can also be regarded as horizontal vector fields on $M$. Using these bases, we define:
\begin{align} \label{eq:408}
h_{ab}=g_M(V^a, V^b) ,\quad T_{abi}=g_M(T(V^a, V^b), X^i) \quad \text{and} \quad A_{ija}=g_M(A(X^i, X^j), V^a).
\end{align}
Here, $a,b,c,\dots$ denote vertical components, and $i,j,k,\dots$ denote horizontal components. Since $\mathbb T$ is abelian, it follows that $h_{ab}$ is $\mathbb T$-invariant and therefore depends only on $N$. Moreover, we define:
\begin{align} \label{eq:408a}
\mathbf{H}=\sum_{a,b} h^{ab} T(V^a, V^b) \quad \text{and} \quad \mu:=-\log \sqrt{\det (h_{ab})},
\end{align}
where $(h^{ab})$ is the inverse matrix of $(h_{ab})$. 

Here, $\mathbf{H}$ represents the mean curvature vector of the fibers, and $\na \mu=\mathbf{H}$. If a different basis $\{V^a\}$ is chosen, $\mu$ will differ only by a constant.

Using O'Neill's formula \cite{One66}, we have the following identity (see \cite[Proposition 8.1]{NT18}):

\begin{lem}\label{L405}
With the above definitions and notations, we have the following identity on $(N, g_N)$:
\begin{align*} 
\Rc(g_N)_{ij}+(\na^2_{g_N} \mu)_{ij}=\Rc(g_M)_{ij}+h^{ab}h^{cd}T_{aci} T_{bdj}+2h^{ab} g_M^{kl} A_{ika} A_{jlb} \ge \Rc(g_M)_{ij}.
\end{align*}
\end{lem}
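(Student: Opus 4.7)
The identity is the Ricci formula for a Riemannian submersion $(M,g_M) \to (N,g_N)$ restricted to horizontal vectors, combined with the classical identification of the fiberwise mean curvature vector with the gradient of $\mu$. My plan is to invoke O'Neill's identities, exploit the abelian (Killing) structure to simplify the divergence-type term, and then compare.

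\emph{Step 1.} I would begin with O'Neill's horizontal-horizontal Ricci formula, which for horizontal vectors $X^i, X^j$ states
\[
\Rc(g_N)_{ij} = \Rc(g_M)_{ij} + 2 h^{ab} g_M^{kl} A_{ika} A_{jlb} + h^{ab}\, g_M\!\bigl((\na_{X^i} T)(V^a, V^b),\, X^j\bigr) - h^{ab} h^{cd} T_{aci} T_{bdj}.
\]
This already contributes the $A$-tensor term in the statement. The remaining task is to rewrite the third and fourth summands as $(\na^2_{g_N}\mu)_{ij}$ plus the claimed $+\,h^{ab}h^{cd} T_{aci} T_{bdj}$.

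\emph{Step 2.} Since $\mathbb T$ is abelian and acts by isometries, each $V^a$ is Killing with $[V^a, V^b]=0$; hence $h_{ab}$ descends to $N$ and $T(V^a,V^b)$ is symmetric in $(a,b)$. Consequently $\mathbf H = h^{ab}T(V^a,V^b)$ is basic horizontal, and $\mathbf H = \na \mu$ for $\mu=-\tfrac12\log\det h_{ab}$ (differentiate $\log\det$). Because $\mu$ is basic and $X^j$ is horizontal, the standard submersion identity gives $(\na^2_{g_N}\mu)_{ij} = g_M(\na^{g_M}_{X^i} \mathbf H, X^j)$. Expanding via the product rule and using $X^i(h^{ab}) = -h^{ac} h^{bd} X^i(h_{cd})$ together with the Killing identity $X^i(h_{cd}) = 2 g_M(\na_{X^i} V^c, V^d) = -2 T_{cdi}$, one obtains
\[
(\na^2_{g_N}\mu)_{ij} = 2\, h^{ab} h^{cd} T_{aci} T_{bdj} + h^{ab}\, g_M\!\bigl((\na_{X^i} T)(V^a, V^b),\, X^j\bigr).
\]

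\emph{Step 3.} Solving for the divergence term and substituting into Step 1 produces the asserted equality. The inequality $\Rc(g_N) + \na^2_{g_N}\mu \ge \Rc(g_M)$ on horizontal vectors is then immediate, since both $2 h^{ab} g_M^{kl} A_{ika} A_{jlb}$ and $h^{ab} h^{cd} T_{aci} T_{bdj}$ are nonnegative quadratic forms, each a sum of squares weighted by the positive-definite inverse metric $h^{ab}$.

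The main obstacle will be the index bookkeeping in Step 2: deriving the Killing identity $X^i(h_{cd}) = -2 T_{cdi}$ with the right sign, and then tracking that the contribution coming from $X^i(h^{ab})$ produces exactly $+\,2 h^{ab}h^{cd}T_{aci}T_{bdj}$, so that after cancellation against the $-\,h^{ab}h^{cd}T_{aci}T_{bdj}$ in O'Neill's formula one is left with precisely the single positive $h^{ab}h^{cd}T_{aci}T_{bdj}$ appearing in the statement.
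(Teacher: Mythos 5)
Your overall route --- O'Neill's horizontal--horizontal Ricci identity combined with the identification $\mathbf{H}=\na\mu$ obtained by differentiating $\log\det(h_{ab})$ --- is exactly the argument behind the paper's citation (the paper gives no computation, only references to O'Neill and to [NT18, Prop.\ 8.1]), and your ingredients $X^i(h_{cd})=-2T_{cdi}$, $\na\mu=\mathbf{H}$, $(\na^2_{g_N}\mu)_{ij}=g_M(\na_{X^i}\mathbf H,X^j)$, and the nonnegativity of the two quadratic terms are all correct. The problem is the formula you quote in Step 1. The correct horizontal--horizontal identity (Besse, \emph{Einstein Manifolds}, 9.36c, with $N=\mathbf H$ the mean curvature vector) reads
\begin{align*}
\Rc(g_M)_{ij}=\Rc(g_N)_{ij}-2h^{ab}g_M^{kl}A_{ika}A_{jlb}-h^{ab}h^{cd}T_{aci}T_{bdj}+\tfrac12\lc g_M(\na_{X^i}\mathbf H,X^j)+g_M(\na_{X^j}\mathbf H,X^i)\rc,
\end{align*}
so when solved for $\Rc(g_N)_{ij}$ the derivative-of-$T$ term carries the opposite sign (and the $T^2$ term a plus sign) compared to your Step 1. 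Consequently the bookkeeping announced in Step 3 does not occur: substituting your Step 2 into your Step 1 as written yields $\Rc(g_N)_{ij}-(\na^2_{g_N}\mu)_{ij}=\Rc(g_M)_{ij}+2h^{ab}g_M^{kl}A_{ika}A_{jlb}-3h^{ab}h^{cd}T_{aci}T_{bdj}$, i.e.\ the Hessian ends up on the wrong side with coefficient $-3$ on the $T^2$ term. This is not the lemma, and it is false: for a warped product $g_M=g_N+\phi^2 d\theta^2$ (one circle fiber, $A=0$, $h=\phi^2$, $\mu=-\log\phi$, $h^{ab}h^{cd}T_{aci}T_{bdj}=\phi^{-2}\phi_i\phi_j$) the true relation is $\Rc(g_N)_{ij}=\Rc(g_M)_{ij}+\phi^{-1}(\na^2\phi)_{ij}$, whereas your Step 1 asserts $\Rc(g_N)_{ij}=\Rc(g_M)_{ij}-\phi^{-1}(\na^2\phi)_{ij}$; already the hyperbolic plane $dr^2+\cosh^2 r\,d\theta^2$ over a line contradicts it.

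There is also an ambiguity you need to resolve, because it hides a second error under one reading. In Steps 1--2 you write $(\na_{X^i}T)(V^a,V^b)$, the covariant derivative of the tensor $T$, but your Step 2 computation (product rule applied to $\mathbf H=h^{ab}T(V^a,V^b)$) actually produces $h^{ab}g_M\lc\na_{X^i}(T(V^a,V^b)),X^j\rc$ plus the $2h^{ab}h^{cd}T_{aci}T_{bdj}$ coming from $X^i(h^{ab})$. These are not the same object: converting $\na_{X^i}(T(V^a,V^b))$ into $(\na_{X^i}T)(V^a,V^b)$ costs exactly another $+2h^{ab}h^{cd}T_{aci}T_{bdj}$ (via $(\na_{X^i}V^a)^v=-h^{ed}T_{dai}V^e$), so in fact $(\na^2_{g_N}\mu)_{ij}=h^{ab}g_M\lc(\na_{X^i}T)(V^a,V^b),X^j\rc$ with no extra $T^2$ term; with the tensor-derivative reading your Step 2 double counts it. The clean repair is to avoid any cancellation between $T^2$ terms altogether: establish $\mathbf H=\na\mu$ and $(\na^2_{g_N}\mu)_{ij}=g_M(\na_{X^i}\mathbf H,X^j)$ as you do, quote the displayed form of O'Neill/Besse above (or simply [NT18, Prop.\ 8.1]), and move the Hessian term across; the inequality then follows from your correct observation that $h^{ab}h^{cd}T_{aci}T_{bdj}$ and $2h^{ab}g_M^{kl}A_{ika}A_{jlb}$ are nonnegative quadratic forms.
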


\begin{rem}
In the general case where the Lie group $\mathbb T$ is not necessarily abelian, one can consider the adjoint bundle $(M \times \mathfrak t)/\mathbb T$ over $N$. The sections of this adjoint bundle correspond to $\mathbb T$-invariant vertical vector fields $\{V^a\}$ on $M$. The metric $(h_{ab})$, as defined in \eqref{eq:408}, can be interpreted as a metric on this adjoint bundle. For further details and additional identities involving the Ricci curvatures of the total space, fibers, base manifold, and $h_{ab}$, see \emph{\cite[Proposition 8.1]{NT18}}.
\end{rem}

The following result is straightforward:

\begin{lem}\label{L405a}
Under the above assumptions, let $f_M$ be a smooth $\T$-invariant function on $M$. Define $f_N$ as the corresponding function on $N$. Then, the following identity holds:
\begin{align*} 
(\na^2_{g_N} f_N)_{ij}=(\na^2_{g_M} f_M)_{ij}.
\end{align*}
\end{lem}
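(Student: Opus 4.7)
The plan is to exploit two facts: first, that $\T$-invariance of $f_M$ forces its gradient $\nabla_{g_M} f_M$ to be horizontal, so that vertical vector fields annihilate $f_M$; second, that the horizontal parts of the Levi-Civita connection on $M$ project to the Levi-Civita connection on $N$ along the Riemannian submersion $\pi: M \to N$.

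More concretely, I would proceed as follows. Since $f_M$ is $\T$-invariant and $\pi$ is a Riemannian submersion, we may write $f_M = f_N \circ \pi$, and for any horizontal lift $X^i$ of a coordinate vector field $\partial_{x_i}$ on $N$ we have $X^i(f_M) = (\partial_{x_i} f_N) \circ \pi$. This already shows that the first-derivative piece $X^i X^j f_M$ in the Hessian equals $(\partial_{x_i} \partial_{x_j} f_N) \circ \pi$. It remains to analyze the connection term $(\nabla^{g_M}_{X^i} X^j)(f_M)$.

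For this, decompose $\nabla^{g_M}_{X^i} X^j = (\nabla^{g_M}_{X^i} X^j)^h + (\nabla^{g_M}_{X^i} X^j)^v$. By the standard O'Neill identities recalled above, the horizontal component is the horizontal lift of $\nabla^{g_N}_{\partial_{x_i}} \partial_{x_j}$, while the vertical component is $A(X^i,X^j)$, a vector field tangent to the $\T$-orbits. Because $f_M$ is $\T$-invariant, every vertical vector field annihilates $f_M$, so $A(X^i,X^j)(f_M) = 0$. Combining, we get
\begin{align*}
(\nabla^2_{g_M} f_M)(X^i, X^j) &= X^i X^j f_M - (\nabla^{g_M}_{X^i} X^j)(f_M) \\
&= X^i X^j f_M - (\nabla^{g_N}_{\partial_{x_i}} \partial_{x_j})^h (f_M) \\
&= \partial_{x_i} \partial_{x_j} f_N - (\nabla^{g_N}_{\partial_{x_i}} \partial_{x_j})(f_N) \\
&= (\nabla^2_{g_N} f_N)(\partial_{x_i}, \partial_{x_j}),
\end{align*}
which is the claim.

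There is essentially no obstacle here; the result is purely formal once one has the Riemannian submersion structure and the $\T$-invariance. The only subtlety worth emphasizing is that the vertical piece $A(X^i,X^j)$, which does contribute nontrivially to the Ricci formula in Lemma \ref{L405}, drops out entirely in the Hessian calculation precisely because $f_M$ is constant along the fibers. In particular, no freeness assumption on the action enters the Hessian identity (unlike in Lemma \ref{L405}, where freeness matters for defining $h_{ab}$ globally); the identity holds pointwise on the image of any $\T$-invariant open set where $f_N$ is well-defined.
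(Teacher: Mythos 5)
Your proof is correct and follows essentially the same route as the paper: both are the standard O'Neill computation for a Riemannian submersion in which the $\T$-invariance of $f_M$ eliminates the vertical contribution. The paper phrases it by noting that $\na_{g_M} f_M$ is the horizontal lift of $\na_{g_N} f_N$ and pairing the vertical correction against a horizontal vector, while you let the vertical term $A(X^i,X^j)$ act on the fiberwise-constant $f_M$; these are equivalent formulations of the same argument.
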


\begin{proof}
Since $f_M$ is $\T$-invariant, $\na_{g_M} f_M$ is the horizontal lift of $\na_{g_N} f_N$. Thus, we compute:
\begin{align*} 
(\na^2_{g_M} f_M)_{ij}=&g_M\lc \na_{g_M, X^i} \na_{g_M} f_M, X^j \rc=g_M\lc \overline{\na_{g_N, X^i} \na_{g_N} f_N}+\frac{1}{2}[X^i, \na_{g_M} f_M]^v, X^j \rc \\
=&g_N \lc \na_{g_N, X^i} \na_{g_N} f_N, X^j \rc=(\na^2_{g_N} f_N)_{ij},
\end{align*}
where $\bar{\cdot}$ denotes the horizontal lift. This completes the proof.
\end{proof}

We now proceed with the proof of the main theorem. Fix a regular point $w \in \mathcal R_W \subset W$, and let $z \in Z$ be a lift of $w$. Suppose the isotropy group of $G$ at $z$ is $H$, which is a finite group. By the slice theorem, there exists a slice $S_z$ through $z$ and a neighborhood $B$ of $z$ in the form of $G_0 \times_H S_z$, where $G_0$ is a small contractible open neighborhood of the identity in $G$, treated as a local group. Since $z$ lies in the principal orbit, the finite group $H$ fixes $S_z$, and $S_z$, equipped with the quotient metric of $g_Z$, is isometric to a neighborhood of $w$. By our construction of $Z$, we may assume $G_0$ and $S_z$ are sufficiently small so that $B$ is contained in a piece $\R \times V_{\alpha}$.

We denote the preimage of $B$, with respect to $\R \times T^{k-1} \times V_{\alpha} \to \R \times V_{\alpha}$ by $B'$. Moreover, $\R \times T^{k-1} \times V_{\alpha}$ is equipped with the quotient metric of $\tilde h^{\alpha}$, denoted by $h_T^{\alpha}$, from 
\begin{align*} 
\R^k \times V_{\alpha}=\R \times \R^{k-1} \times V_{\alpha} \to \R \times T^{k-1} \times V_{\alpha},
\end{align*}
where $\R \times T^{k-1} \times V_{\alpha}$ comes from the trivialization \eqref{eq:402xxa} and the decomposition \eqref{eq:402xa}.

By our construction, and by taking a smaller $G_0$ if necessary, we may assume that $G_0$ can be lifted to a local action on $\R \times T^{k-1} \times V_{\alpha}$. Furthermore, under the quotient by $T^{k-1}$, this local group action agrees with the action on $\R \times V_{\alpha}$.

Recall from Lemma \ref{L401a} that $\tilde h^{\alpha}$ on $\R^k \times V_{\alpha}$ is the limit of $\tilde g_i^{\alpha}$, which is the lift of $\tilde g_i$. By Theorem \ref{thm:dia} (b) and the Ricci shrinker equation \eqref{E100}, the quotient metric of $h_T^{\alpha}$ on $B''=B'/G_0$, denoted by $h^{\alpha}$, coupled with the quotient function $f^{\alpha}$, satisfies the Ricci shrinker equation.

Since $B'=S_z \times_H G_0 \times T^{k-1}$ and the $T^{k-1}$-action commutes with the $G_0$-action, it follows that $B''= S_z \times (T^{k-1}/H)$, where $T^{k-1}/H$ is also a $(k-1)$-dimensional torus since $H$ is finite. By construction, $(B'', h^{\alpha},  f^{\alpha} )/ (T^{k-1}/H)$ is locally isometric to $(W, g_W, f_W)$ around $w$. 

Combining the Ricci shrinker equation:
\begin{align} \label{eq:410a}
\Rc(h^{\alpha})+\na_{h^{\alpha}}^2(f^{\alpha}) = \frac{1}{2} h^{\alpha},
\end{align}
with Lemma \ref{L405} and Lemma \ref{L405a}, we obtain the following inequality on a small neighborhood of $w$:
\begin{align} \label{eq:410}
\Rc(g_W)+\na_{g_W}^2(f_W+\mu) \ge \frac{1}{2} g_W,
\end{align}
where $\mu$ is the density function in \eqref{eq:408a} with respect to the torus bundle $(B'', h^{\alpha})$. 

We now demonstrate that the density function $\mu$ is globally well-defined on $\mathcal R_W$ and that \eqref{eq:410} holds on $\mathcal R_W$ as well. The density function $\mu(w)$ on $\R \times V_{\alpha}$ is defined so that $|H|e^{-\mu(w)}$, where $|H|$ is the order of the principal isotropy group, represents the volume density of the $T^{k-1}$-fiber in $\R \times T^{k-1} \times V_{\alpha}$ over $w \in \R \times V_{\alpha}$, with respect to the metric $h^{\alpha}_T$. Considering the gluing map described in \eqref{eq:402b}, it is straightforward to verify that $\mu$ is globally well-defined on $\mathcal R_W$. Furthermore, by construction, $\mu$ is invariant under the $\R$-action on $W$.

Define $\bar f_W:=f_W+\mu$. Then, $\bar f_W$ is a smooth, $\R$-invariant function on $\mathcal R_W$. From \eqref{eq:410}, we have:
\begin{align} \label{eq:411}
\Rc(g_W)+\na_{g_W}^2(\bar f_W) \ge \frac{1}{2} g_W.
\end{align}

Next, fix a point $w_0 \in \mathcal R_W$. Without loss of generality, we assume $\overline{B_{g_W}(w_0, 2)} \subset \mathcal R_W$. Take a sequence $w_l$ on the same $\R$-orbit as $w_0$ such that $d_W(w_0, w_l) \to +\infty$ as $l \to \infty$. By Lemma \ref{L404}, such a sequence exists. For each $w_l$, consider a minimizing geodesic $\gamma(t)$, defined for $t \in [0,d_l]$, where $d_l=d_W(w_0, w_l)$, such that $\gamma(0)=w_0$ and $\gamma(d_l)=w_l$. Since $\mathcal R_W$ is geodesically convex by Kleiner's isotropy lemma \cite{KL90}, we conclude that $\gamma(t) \subset \mathcal R_W$ for all $t \in [0, d_l]$.

We construct a function $\eta(t)$ defined as follows:
\begin{align*} 
\eta(t)=
\begin{cases} 
t & \text{ for }t \le 1, \\
1 & \text{ for }1 \le t\le d_l-1, \\
d_l-t & \text{ for } d_l-1 \le t\le d_l.
\end{cases}
\end{align*}

Using the second variation formula for the minimizing geodesic $\gamma(t)$, we have
\begin{align} 
\int_0^{d_l} \eta^2 \Rc(g_W)(\gamma'(t),\gamma'(t)) \, dt \le (m'-1)\int_0^{d_l} (\eta')^2 \, dt=2(m'-1),
\label{eq:412}
\end{align}
where $m' $ is the dimension of $\mathcal R_W$. From \eqref{eq:411},
\begin{align} \label{eq:412a}
\Rc(g_W)(\gamma'(t),\gamma'(t)) +\lc \bar f_W(\gamma(t)) \rc''\ge \frac{1}{2}.
\end{align}
Combining \eqref{eq:412a} with \eqref{eq:412}, we obtain:
\begin{align} 
\frac{d_l}{2}-\frac{2}{3}-2 (m'-1) &\le \int_0^{d_l} \eta^2 \lc \bar f_W(\gamma(t)) \rc'' \, dt \notag \\
& = -2\int_0^1 \eta \lc \bar f_W(\gamma(t)) \rc' \, dt+2\int_{d_l-1}^{d_l} \eta \lc \bar f_W(\gamma(t)) \rc' \, dt \notag \\
& \le 2\sup_{t\in[0,1]} |\na \bar f_W |(\gamma(t))+2\sup_{t\in[d_l,d_l-1]} |\na \bar f_W |(\gamma(t)),
\label{eq:413}
\end{align}
where integration by parts is used to derive the equality. 

Since the $\R$-action is isometric, we conclude that the right-hand side in \eqref{eq:413} is bounded by $4\sup_{w \in B_{g_W}(w, 1)} |\na \bar f_W|$, which is a finite constant. However, taking $l \to \infty$ leads to a contradiction in \eqref{eq:413}, as the left-hand side grows without bound while the right-hand side remains finite.

Thus, we have completed the proof of Theorem \ref{thm:001}.

\section{Non-collapsing for smooth metric measure spaces}

In this section, we extend Theorem \ref{thm:001} to a broader class of smooth metric measure spaces, defined as follows.

\begin{defn}
Given a constant $\kappa>0$, let $\mathcal M(n, A, \kappa)$ denote the class of pointed smooth metric measure spaces $(M^n, g, f, p)$ satisfying the following conditions:
\begin{enumerate}[label=\textnormal{(\alph{*})}]
\item $(M^n, g)$ is a complete $n$-dimensional smooth Riemannian manifold with $|\Rm| \le A$.

\item $f$ is a $C^2$-function on $M$ such that $p$ is minimum point, $f(p)=0$, and $|\na^2 f| \le A$.

\item The Bakry-\'Emery condition holds: $\Rc+\na^2 f \ge \kappa g$.
\end{enumerate}
We also define the subclass $\mathcal N(n, A, \kappa) \subset \mathcal M(n, A, \kappa)$ consisting of pointed smooth metric measure spaces where the underlying manifold is simply connected and has a finite second homotopy group.
\end{defn}

It follows directly from our definition that any Ricci shrinker $(M^n, g, f,p) \in \mathcal E(n, A)$ belongs to $\mathcal M(n, C(n)A+1/2, 1/2)$ if $f$ is shifted by a constant such that $f(p)=0$. Furthermore, for any $(M^n, g, f,p) \in \mathcal M(n, A, \kappa)$, we can define the measure $dV_f:=e^{-f}dV_g$. In this setting, $(M^n, g, dV_f)$ is a smooth $\mathrm{RCD}(\kappa, \infty)$ space, as defined in \cite{Gi18}.

\begin{lem}\label{L501}
There exists a constant $C=C(n, A)>0$ such that for any $(M^n, g, f,p) \in \mathcal M(n, A, \kappa)$ and $x \in M$,
\begin{align*}
f(x) \le A d^2_g(p, x) \quad \text{and} \quad \kappa d_g(p, x)-C \le |\na f(x)| \le A d_g(p, x).
\end{align*}
\end{lem}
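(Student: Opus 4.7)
The plan is to prove the three inequalities by working along a minimizing unit-speed geodesic $\gamma\colon[0,L]\to M$ from $p$ to $x$ (where $L=d_g(p,x)$) and setting $h(t)=f(\gamma(t))$. Since $p$ is a minimum of $f$, we have $h(0)=0$ and $h'(0)=0$; since $|\nabla^2 f|\le A$, the function $h$ is $C^2$ with $h''(t)=\nabla^2 f(\gamma',\gamma')\in[-A,A]$. Taylor's theorem then gives $h(L)=\int_0^L(L-s)h''(s)\,ds\le \tfrac{A}{2}L^2\le AL^2$, which is the bound $f(x)\le A\,d_g^2(p,x)$. The upper bound on the gradient is equally direct: using $\nabla f(p)=0$, the fundamental theorem of calculus applied to $\nabla f$ along $\gamma$ gives $|\nabla f(x)|\le \int_0^L |\nabla_{\gamma'}\nabla f|\,dt\le AL$.

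The lower bound on $|\nabla f|$ requires the Bakry-\'Emery condition. Applied to $\gamma'$ it yields
\begin{equation*}
h''(t)=\nabla^2 f(\gamma',\gamma')\ \ge\ \kappa-\Rc(\gamma',\gamma').
\end{equation*}
Since $\gamma$ is a geodesic, $\nabla^2 f(\gamma',\gamma')=\tfrac{d}{dt}\langle\nabla f(\gamma(t)),\gamma'(t)\rangle$, so integrating from $0$ to $L$ and using $\nabla f(p)=0$ produces
\begin{equation*}
\langle\nabla f(x),\gamma'(L)\rangle\ \ge\ \kappa L-\int_0^L\Rc(\gamma'(t),\gamma'(t))\,dt.
\end{equation*}
Since $|\nabla f(x)|\ge\langle\nabla f(x),\gamma'(L)\rangle$, the desired lower bound reduces to showing that $\int_0^L\Rc(\gamma',\gamma')\,dt\le C(n,A)$ independently of $L$.

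The main step, and the only step that uses the minimizing property of $\gamma$ (rather than just $|\Rm|\le A$), is this uniform bound on the integrated Ricci curvature. The idea is the classical second-variation / index-form argument behind Myers' theorem. Pick a parallel orthonormal frame $e_1,\dots,e_{n-1}$ along $\gamma$ perpendicular to $\gamma'$, and for any Lipschitz cutoff $\phi\colon[0,L]\to\R$ with $\phi(0)=\phi(L)=0$, apply $I(\phi e_i,\phi e_i)\ge 0$ and sum to obtain
\begin{equation*}
\int_0^L\phi(t)^2\,\Rc(\gamma'(t),\gamma'(t))\,dt\ \le\ (n-1)\int_0^L\phi'(t)^2\,dt.
\end{equation*}
Taking the tent cutoff $\phi(t)=\min(t,1,L-t)_+$ (for $L\ge 2$) makes the right-hand side equal to $2(n-1)$, bounds the integral of $\Rc(\gamma',\gamma')$ over the middle interval $[1,L-1]$ by $C(n,A)$, and on the two unit-length end intervals the pointwise bound $|\Rc|\le(n-1)A$ (which comes from $|\Rm|\le A$) takes over. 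Assembling, $\int_0^L\Rc(\gamma',\gamma')\,dt\le C(n,A)$; for $L<2$ the pointwise bound gives the same conclusion trivially. Combining this with the previous display yields $|\nabla f(x)|\ge\kappa d_g(p,x)-C$, completing the proof.

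The main obstacle is the uniform-in-$L$ integral bound on $\Rc(\gamma',\gamma')$: the naive pointwise estimate $|\Rc|\le(n-1)A$ would only give something linear in $L$, which is useless. The minimizing property of $\gamma$, exploited through the index form, is precisely what makes the bound independent of $L$ and allows the contribution of the intermediate curvature to be absorbed into a universal constant depending only on $n$ and $A$.
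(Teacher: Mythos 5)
Your proof is correct and follows essentially the same route as the paper: a Myers-type second-variation (index form) estimate with a tent cutoff along the minimizing geodesic from $p$ to $x$, combined with the Bakry-\'Emery condition and $\nabla f(p)=0$, with the easy bounds on $f$ and $|\nabla f|$ coming from $|\nabla^2 f|\le A$. The only difference is bookkeeping: the paper multiplies the Bakry-\'Emery inequality by the cutoff squared and integrates by parts, controlling the resulting endpoint terms via $|\nabla^2 f|\le A$, whereas you first isolate and uniformly bound $\int_0^L \Rc(\gamma',\gamma')\,dt$, using the pointwise bound $|\Rc|\le (n-1)A$ on the unit end intervals.
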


\begin{proof}
The bound on $f(x)$ and the upper bound on $|\na f(x)|$ follow directly from $|\na^2 f| \le A$ and the assumption that $f(p)=\na f(p)=0$. To establish the lower bound for $|\na f(x)|$, consider a minimizing geodesic $\gamma(t)$ for $t \in [0, d_x]$ with $\gamma(0)=p$ and $\gamma(d_x)=x$, where $d_x=d_g(p,x)$. 

Using the second variation formula and reasoning similarly to \eqref{eq:413}, we obtain:
\begin{align*} 
\kappa d_x-\frac{2}{3}-2(n-1) \le 2\sup_{t\in[0,1]} |\na f |(\gamma(t))+2\sup_{t\in[d_x,d_x-1]} |\na f |(\gamma(t)) \le 4A+|\na f(x)|,
\end{align*}
where the last inequality uses the bound $|\na^2 f| \le A$. This completes the proof.
\end{proof}

\begin{lem} \label{lem:501a}
There exists a constant $C=C(n, A)>0$ such that for any $(M^n, g, f,p) \in \mathcal M(n, A, \kappa)$ and $x \in M$,
\begin{align*} 
f(x) \ge \frac{\kappa}{2} d^2_g(p, x)-C d_g(p, x)-C.
\end{align*}
\end{lem}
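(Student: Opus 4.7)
Let $d := d_g(p, x)$. The plan is to integrate the Bakry-\'Emery bound along a unit-speed minimizing geodesic from $p$ to $x$ twice, and to control the resulting Ricci term via the second variation of arclength together with the pointwise curvature bound. This parallels the strategy of Lemma \ref{L501}, but extracts an additional order in $d$.

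Let $\gamma : [0, d] \to M$ be a unit-speed minimizing geodesic with $\gamma(0) = p$ and $\gamma(d) = x$, and set $h(t) := f(\gamma(t))$. Since $p$ is a minimum of $f$ with $f(p) = 0$, we have $h(0) = h'(0) = 0$, and so Taylor's theorem with integral remainder yields
\begin{align*}
f(x) \;=\; h(d) \;=\; \int_0^{d} (d - s)\, \na^2 f(\gamma', \gamma')\big|_s \, ds.
\end{align*}
The Bakry-\'Emery condition $\na^2 f \ge \kappa g - \Rc$ then gives
\begin{align*}
f(x) \;\ge\; \frac{\kappa d^2}{2} \;-\; \int_0^{d}(d-s)\,\Rc(\gamma', \gamma')\big|_s\, ds,
\end{align*}
and by Fubini the last integral equals $\int_0^d R(t)\, dt$, where $R(t) := \int_0^t \Rc(\gamma', \gamma')\big|_s\, ds$.

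The heart of the argument is establishing the uniform upper bound $R(t) \le C(n, A)$ for all $t \in [0, d]$. For $t \ge 2$ the curve $\gamma|_{[0,t]}$ remains minimizing, so plugging the test function $\eta_t$ (equal to $s$ on $[0,1]$, to $1$ on $[1, t-1]$, and to $t - s$ on $[t-1, t]$) into the second variation of arclength yields $\int_0^t \eta_t^2\, \Rc(\gamma', \gamma')\, ds \le 2(n-1)$; combined with $|\Rc| \le (n-1)A$ on the two unit intervals where $\eta_t < 1$, this gives $R(t) \le C(n, A)$. For $t \in [0, 2]$ the pointwise bound yields $|R(t)| \le 2(n-1)A$ directly. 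Integrating over $[0,d]$ produces
\begin{align*}
f(x) \;\ge\; \frac{\kappa d^2}{2} \;-\; C(n, A)\, d.
\end{align*}

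Finally, for $d$ smaller than some threshold $d_0 = d_0(n, A)$ we use $f(x) \ge f(p) = 0$; since tracing the Bakry-\'Emery inequality against $|\na^2 f| \le A$ and $|\Rc| \le (n-1) A$ forces $\kappa \le n A$, the term $\tfrac{\kappa}{2} d^2$ is bounded by a constant depending only on $n$ and $A$ in this regime, and can be absorbed into $C$. The only technical step is the uniform bound $R(t) \le C(n, A)$, and it is a direct adaptation of the second variation argument already used in Lemma \ref{L501}, so no new obstacle is expected.
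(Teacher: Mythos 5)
Your proof is correct, but it takes a genuinely different route from the paper's. The paper deduces this lemma from Lemma \ref{L501}: it takes the gradient lower bound $|\na f| \ge \kappa\, d_g(p,\cdot)-C$, flows along $\na f/|\na f|^2$ from the sphere $\partial B(p,2C/\kappa)$ (arguing that every point outside $\bar B(p,2C/\kappa)$ is reached by such a trajectory, since otherwise $f$ would become negative), and integrates the ODE comparison $d'(t)\le 1/(\kappa d-C)$ to produce the quadratic lower bound. You instead integrate the Bakry-\'Emery inequality twice along a minimizing geodesic from $p$ to $x$, using $f(p)=0$, $\na f(p)=0$ and Taylor's formula with integral remainder, and then control the accumulated Ricci term $R(t)=\int_0^t \Rc(\gamma',\gamma')$ uniformly by the second variation of arclength together with the pointwise bound $|\Rc|\le C(n)A$ on the two end intervals --- the same device the paper uses inside the proof of Lemma \ref{L501} and in \eqref{eq:413}, but applied directly at the level of $f$ rather than of $|\na f|$. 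All steps check out: the subsegments of a minimizing geodesic are minimizing, the Fubini exchange is valid, and the index-form inequality $\int_0^t \eta_t^2\,\Rc(\gamma',\gamma')\le 2(n-1)$ is exactly the paper's \eqref{eq:412} with $d_l$ replaced by $t$. Your route avoids the flow/trajectory-covering argument, and as a small bonus it yields $f(x)\ge \tfrac{\kappa}{2}d^2-C(n,A)\,d$ with a constant genuinely independent of $\kappa$, whereas the paper's argument picks up an additive error of order $C^2/\kappa$ from the starting sphere of radius $2C/\kappa$ (so its constant implicitly depends on $\kappa$, which is harmless for the application but weaker than the stated dependence). Your closing paragraph about small $d$ is superfluous, since your main estimate already holds for all $d$, but it is not incorrect.
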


\begin{proof}
Set $r=2C/\kappa$, where $C$ is the constant in Lemma \ref{L501}. Then, for $d_g(p,x) \ge r$, we have
\begin{align} \label{eq:500x}
|\na f(x)| \ge \kappa d_g(p,x)-C \ge C.
\end{align}
For any $y \notin \bar B(p, r)$, we claim that there exists $x \in \partial B(p,r)$ such that $\varphi^b(x)=y$ for some $b>0$, where $\{\varphi^t\}$ is a family of diffeomorphisms generated by $\na f/|\na f|^2$ with $\varphi^0=\mathrm{id}$.

If this is not true, it implies that $\varphi^t(y) \notin \bar B(p, r)$ for any $t \le 0$. Them, for all $t \le 0$,
\begin{align*} 
f(\varphi^t(y))=f(y)+t.
\end{align*}
This implies there exists $t_0<0$ such that $f(\varphi^{t_0}(y))<0$, which contradicts the assumption that $0$ is the minimum value of $f$.

By definition, $f(y)=f(x)+b$. Let $d(t)=d_g(p, \varphi^t(x))$. For any $t \in [0, b]$, we have
\begin{align*} 
d'(t) \le \frac{1}{|\na f|} \le \frac{1}{\kappa d(t)-C},
\end{align*}
where the last term is positive by \eqref{eq:500x}. Integrating this inequality yields:
\begin{align*} 
\frac{\kappa}{2} (d^2(b)-d^2(0))-C (d(b)-d(0)) \le b =f(y)-f(x).
\end{align*}
Since $f(x) \ge 0$ and $d(0) \le r$, the conclusion follows.
\end{proof}

Given $(M^n, g_0, f_0,p_0) \in \mathcal M(n, A, \kappa)$, we consider the Ricci flow $(M^n, g(t))_{t \in [0, T]}$ with $g(0)=g_0$. The existence of the Ricci flow flow is guaranteed by \cite{Shi89B} since $|\Rm(g_0)| \le A$ by our assumption. Furthermore, from \cite[Lemma 6.1]{CLN06}, there exists a constant $\ep=\ep(n, A)=c(n)/A \le 1$ such that
\begin{align*}
|\Rm(g(t))| \le 2A
\end{align*}
for $t \in [0, \ep]$. To handle the potentially noncompact manifold $M$, we need the following exhaustion function, the existence of which is established in \cite[Lemma 4.5]{Shi97}.

\begin{lem}\label{L502}
There exists a smooth function $\psi$ on $M$ and a constant $C=C(n, A)>0$ such that on $M \times [0, \ep]$,
\begin{align*}
C^{-1} (1+d_t(p_0,x)) \le \psi(x)  &\le C (1+d_t(p_0,x)),\\
|\na \psi|+|\na^2 \psi| &\le C.
\end{align*}
\end{lem}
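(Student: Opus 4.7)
The strategy is to construct $\psi$ using only the initial metric $g_0$ and then transfer the bounds to all $t \in [0, \ep]$. Since $|\Rm(g(t))| \le 2A$ on $[0, \ep]$, we have $|\pt g(t)| = 2|\Rc(g(t))| \le C(n) A$, giving the uniform equivalence $e^{-C(n)At} g_0 \le g(t) \le e^{C(n)At} g_0$ and hence $C^{-1} d_0(p_0, x) \le d_t(p_0, x) \le C d_0(p_0, x)$ with $d_0 := d_{g_0}(p_0, \cdot)$. Moreover, Shi's derivative estimate gives $|\na \Rc(g(t))|_{g(t)} \le C(n, A) t^{-1/2}$, so integrating $\pt \Gamma = -\na \Rc$ in time yields $|\Gamma(g(t)) - \Gamma(g_0)|_{g_0} \le C(n, A) \sqrt{\ep}$. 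Consequently the $g(t)$-Hessian of any smooth function differs from its $g_0$-Hessian by at most $C|\nabla \cdot|_{g_0}$, and it suffices to produce a smooth $\psi$ on $M$ satisfying $C^{-1}(1 + d_0) \le \psi \le C(1 + d_0)$ together with $|\na \psi|_{g_0} + |\na^2 \psi|_{g_0} \le C$.

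To build such a $\psi$, I would mollify $1 + d_0$ through a partition of unity. Choose a maximal $1$-separated net $\{x_i\} \subset M$ in $(M, g_0)$, so that $\{B_{g_0}(x_i, 1)\}$ covers $M$; by Bishop--Gromov volume comparison under $\Rc(g_0) \ge -(n-1)A$, the multiplicity of $\{B_{g_0}(x_i, 10)\}$ is at most $N = N(n, A)$. For each $i$, construct a smooth cut-off $\phi_i \ge 0$ supported in $B_{g_0}(x_i, 2)$ with $\phi_i \ge c(n, A)$ on $B_{g_0}(x_i, 1)$ and $|\na \phi_i|_{g_0} + |\na^2 \phi_i|_{g_0} \le C(n, A)$. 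Since no a priori lower bound on the injectivity radius is available (our setting is potentially collapsing), one cannot simply compose $d_0(x_i, \cdot)$ with a smooth bump; instead one invokes a Greene--Wu type smoothing of $d_0(x_i, \cdot)^2$, using that Hessian comparison under $|\Rm(g_0)| \le A$ controls the derivatives of the smoothed squared distance on $B_{g_0}(x_i, 3)$. Setting $\tilde \phi_i := \phi_i / \sum_j \phi_j$ produces a smooth partition of unity with the same derivative bounds, since $\sum_j \phi_j \ge c(n, A)$ and only $N$ summands are nonzero at any point.

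Now define
\[
\psi(x) := \sum_i \bigl(1 + d_0(p_0, x_i)\bigr)\, \tilde \phi_i(x).
\]
Comparability is immediate: for $x \in \mathrm{supp}(\tilde \phi_i)$, $|d_0(p_0, x_i) - d_0(p_0, x)| \le 2$, so $|\psi(x) - (1 + d_0(p_0, x))| \le 2$, which gives the two-sided bound. For the derivatives, use the telescoping identities $\sum_i \na \tilde \phi_i = 0$ and $\sum_i \na^2 \tilde \phi_i = 0$, obtained by differentiating $\sum_i \tilde \phi_i = 1$, to rewrite
\begin{align*}
\na \psi(x) &= \sum_i \bigl( d_0(p_0, x_i) - d_0(p_0, x) \bigr)\, \na \tilde \phi_i(x), \\
\na^2 \psi(x) &= \sum_i \bigl( d_0(p_0, x_i) - d_0(p_0, x) \bigr)\, \na^2 \tilde \phi_i(x).
\end{align*}
At each $x$, only $N$ summands are nonzero, each coefficient is bounded by $2$, and the derivative bounds on $\tilde \phi_i$ are uniform, giving $|\na \psi|_{g_0} + |\na^2 \psi|_{g_0} \le C(n, A)$.

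The main obstacle is the construction of the cut-offs $\phi_i$ with bounded Hessian when no injectivity radius lower bound is available, which forces one to smooth the squared distance at a scale determined purely by curvature via Greene--Wu. Once this is in place, the finite overlap from Bishop--Gromov and the telescoping cancellation $\sum_i \na^m \tilde \phi_i = 0$ reduce the remaining estimates to a routine partition-of-unity computation, and the final transfer from $g_0$ to $g(t)$ is handled by the metric and Christoffel comparisons discussed in the first paragraph.
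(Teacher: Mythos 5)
The paper does not actually prove this lemma; it quotes it from [Shi97, Lemma 4.5]. So your proposal is a self-contained construction, and its overall skeleton is sound and standard: the uniform equivalence of $g(t)$ with a fixed metric via $|\Rm(g(t))|\le 2A$, the Christoffel comparison via Shi's estimate $|\na\Rm|\le Ct^{-1/2}$ (integrable in time), the maximal net with finite overlap from Bishop--Gromov, and the telescoping identity $\sum_i\na^m\tilde\phi_i=0$ which reduces everything to derivative bounds on the cutoffs. All of that is fine.

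The genuine gap is exactly the step you flag as ``the main obstacle'': the construction of unit-scale cutoffs $\phi_i$ with \emph{two-sided} Hessian bounds, which is the real content of the cited lemma, and your justification for it does not work as stated. Hessian comparison under $|\Rm(g_0)|\le A$ controls $\na^2 d_0(x_i,\cdot)^2$ only from above (in the support/barrier sense) beyond the cut locus; at the cut locus the distributional Hessian has negative singular parts, and in the collapsed regime relevant here the cut locus occurs at the scale of the injectivity radius, hence inside every unit ball, so no pointwise lower Hessian bound for the squared distance is available. The mechanism by which mollification of a merely Lipschitz function yields two-sided Hessian bounds ($|\na^2 f_\ep|\lesssim \mathrm{Lip}(f)/\ep$) is not Hessian comparison but integration by parts against the smoothing kernel, and on a manifold this requires controlling two base-point derivatives of the exponential map (equivalently, derivatives of the kernel $K_\ep(y,u)$ and of its Jacobian), which involve $\na\Rm$ --- and no bound on $\na\Rm(g_0)$ is assumed in $\mathcal M(n,A,\kappa)$. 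The natural repair stays inside your own framework: perform the net/partition-of-unity construction with respect to $g(\ep)$ (or $g(\ep/2)$), where Shi's estimates give $|\na^k\Rm|\le C_k(n,A)$ for all $k$ and where a mollified distance function with bounded first and second derivatives can be constructed (or cited from Cheeger--Fukaya--Gromov, Tam, or [Shi97, Lemma 4.5] itself), and then transfer to every $t\in[0,\ep]$ by the metric and Christoffel comparisons you already established. As written, though, the key step is asserted rather than proved, and the asserted reason (Greene--Wu plus Hessian comparison at $t=0$) is insufficient.
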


We also need the following heat kernel estimate from \cite[Corollary 5.6]{CTY11}:

\begin{lem}\label{L503}
Let $H(x,t,y,s)$ be the heat kernel of Ricci flow $(M ,g(t))_{t \in [0, \ep]}$. Then, 
\begin{align*}
H(x, t, y,0) \le \frac{C}{|B_{g_0}(x, \sqrt t)|} e^{-\frac{d_0^2(x, y)}{C t}},
\end{align*}
for $C=C(n, A)>0$, where $|\cdot |$ denotes the volume with respect to $g_0$.
\end{lem}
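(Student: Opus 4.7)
The plan is to adapt the classical Grigor'yan--Davies--Saloff-Coste approach for Gaussian heat kernel upper bounds to the time-dependent Ricci flow setting. Since $|\Rm(g(t))| \le 2A$ on $[0,\ep]$, Hamilton's evolution equation $\partial_t g = -2\Rc$ gives $e^{-C\ep} g_0 \le g(t) \le e^{C\ep} g_0$ with $C=C(n,A)$, so distances and volumes with respect to $g(t)$ and $g_0$ are uniformly comparable throughout the flow. In particular, Bishop--Gromov combined with the two-sided curvature bound yields uniform volume doubling and a local Sobolev inequality that is uniform in $t \in [0,\ep]$.

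First I would establish the on-diagonal bound $H(x,t,x,0) \le C/|B_{g_0}(x,\sqrt{t})|$. Combining the uniform local Sobolev inequality with volume doubling gives a Nash-type inequality of the form $\|u\|_2^{2+4/n} \le C\,|B_{g_0}(x,\sqrt{t})|^{4/n}\,\|u\|_1^{4/n}\,\la -\Delta_{g(t)} u,u\ra$ for functions supported in $B_{g_0}(x,\sqrt{t})$. The standard Nash ODE argument applied to $t \mapsto \int H(x,t,y,0)^2\,dV_{g(t)}(y)$ then produces the on-diagonal decay, with the mild time-dependence of the metric and measure only affecting constants, since $|\partial_t dV_{g(t)}| = |R|\,dV_{g(t)} \le C(n)A\,dV_{g(t)}$.

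Next I would upgrade to the full Gaussian off-diagonal bound via Davies' integrated maximum principle. For a Lipschitz weight $\xi(y)$ with $|\na \xi|_{g(t)} \le 1$ and a parameter $\alpha>0$, set
\begin{align*}
J_{\alpha\xi}(t) := \int_M e^{\alpha \xi(y)}\,H(x,t,y,0)^2\,dV_{g(t)}(y).
\end{align*}
Differentiating, integrating by parts, and using $|\Rc| \le 2A$ yields $\partial_t J_{\alpha\xi} \le \alpha^2 J_{\alpha\xi} + C(n,A) J_{\alpha\xi}$. Iterating between times $t/2$ and $t$, combining with the on-diagonal bound, and finally choosing $\xi(y) = d_0(x,y)$ and optimizing over $\alpha$ yields the Gaussian factor $e^{-d_0^2(x,y)/(Ct)}$ after allowing a loss in the constant $C$.

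The main technical obstacle is Step 3: the time-dependent Laplacian and measure mean that differentiating $J_{\alpha\xi}$ generates extra curvature terms and that $|\na\xi|_{g(t)}$ must be controlled uniformly in $t$. Both are handled by the uniform metric equivalence established at the outset, which reduces all $g(t)$-quantities to $g_0$-quantities modulo constants depending only on $n$ and $A$. Alternatively, one could bypass Davies' method entirely by invoking Perelman's differential Harnack inequality for the conjugate heat kernel under Ricci flow, which directly yields upper bounds of the desired form once the on-diagonal estimate is known.
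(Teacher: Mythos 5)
First, a point of comparison: the paper does not prove this lemma at all; it is quoted directly from \cite[Corollary 5.6]{CTY11}, so your proposal amounts to reproving a cited result. Your overall plan --- uniform equivalence of $g(t)$ and $g_0$ from $|\mathrm{Rm}(g(t))|\le 2A$ on $[0,\epsilon]$ with $\epsilon=c(n)/A$, an on-diagonal bound from a local Sobolev/Nash inequality, and the Gaussian factor from an integrated maximum principle of Davies--Grigor'yan type --- is indeed the standard route, and it only needs doubling and Sobolev at scales $\sqrt{t}\le 1$, which the curvature bound provides.

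However, as written the central step is set up in the wrong variable, so the claimed differential inequality does not follow. In $H(x,t,y,0)$ the forward equation $\partial_t H=\Delta_{g(t)}H$ holds in the $x$-variable; as a function of $y$ (with the second time frozen at $0$) there is no parabolic equation in $t$. Hence for your functional $J_{\alpha\xi}(t)=\int e^{\alpha\xi(y)}H(x,t,y,0)^2\,dV_{g(t)}(y)$, differentiating in $t$ produces $\Delta_{g(t),x}H$ sitting under a $y$-integral, and the integration by parts you invoke is unavailable; the same mismatch affects the on-diagonal quantity $t\mapsto\int H(x,t,y,0)^2\,dV_{g(t)}(y)$. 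The repair is routine but must be made: fix $y$, set $u(x,t)=H(x,t,y,0)$, take $J(t)=\int e^{\xi(x,t)}u^2\,dV_{g(t)}(x)$ with a weight satisfying $\partial_t\xi+|\nabla\xi|^2_{g(t)}\le 0$ (or your $\alpha\xi$ version in the $x$-variable), and absorb the term coming from $\partial_t\,dV_{g(t)}=-R\,dV_{g(t)}$ using $|R|\le C(n)A$; the on-diagonal bound should be run through the semigroup identity $H(x,t,x,0)=\int H(x,t,z,t/2)\,H(z,t/2,x,0)\,dV_{g(t/2)}(z)$ together with $L^1\to L^2$ decay for the forward equation in $x$ and for the conjugate equation in $(y,s)$, the latter carrying a harmless zeroth-order $R$ term. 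Two smaller points: the Nash argument must be localized (Grigor'yan's or a Moser-iteration form), since the Sobolev inequality is only local; and your fallback claim that Perelman's differential Harnack inequality directly yields upper bounds is backwards --- Perelman's inequality gives a lower bound for the conjugate heat kernel, and upper bounds of the stated type are exactly what \cite{CTY11} supplies.
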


\begin{prop} \label{prop:smoothx}
For the Ricci flow $(M,g(t))_{t \in [0,\ep]}$, there exists a solution of the heat equation
\begin{align} \label{eq:502s}
\square f=0 ,\quad f(0)=f_0,
\end{align}
on $M \times [0,\ep]$, where $\square:=\partial_t-\Delta_t$, such that the following properties hold:
\begin{enumerate}[label=\textnormal{(\alph{*})}]
\item There exists $C=C(n, A)>0$ such that
\begin{align} \label{eq:502}
0 \le f(x, t) \le C \lc f_0(x)+d_0(p_0, x)+1 \rc
\end{align}
for all $(x, t) \in M \times [0, \ep]$. In particular, 
\begin{align} \label{eq:502a}
0 \le f(x, t) \le C \lc d_0^2(p_0, x)+1 \rc.
\end{align}

\item For any constant $\alpha>0$, the following integral is finite:
\begin{align} \label{eq:502b}
\int_0^\ep \int_M \lc |\na f(x)|^2+|\na^2 f (x)|^2 \rc e^{-\alpha d_0^2(p_0,x)} \, dV_t(x) dt <\infty.
\end{align}
\end{enumerate}
\end{prop}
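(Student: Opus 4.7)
The plan is to construct $f$ by convolution with the heat kernel of the Ricci flow,
\[
f(x,t) := \int_M H(x,t,y,0)\, f_0(y)\, dV_{g_0}(y),
\]
so that the Gaussian upper bound on $H$ from Lemma \ref{L503} dominates the at-most quadratic growth of $f_0$ guaranteed by Lemma \ref{lem:501a} and $|\na^2 f_0|\le A$. Absolute convergence, smoothness, and the initial condition $f(\cdot,0)=f_0$ then follow from standard arguments, with cutoffs provided by the exhaustion function $\psi$ of Lemma \ref{L502}. The lower bound $f\ge 0$ is immediate from positivity of $H$ and $f_0\ge 0$ (Lemma \ref{L501}).

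For the upper bound in part (a), I will analyze the defect $u := f - f_0$ via Duhamel's formula. Since $\square f = 0$ and $\square f_0 = -\Delta_t f_0$, the function $u$ satisfies $\square u = \Delta_t f_0$ with $u(\cdot,0)=0$. The bounded-curvature hypothesis enters through the control of $\Delta_t f_0$: writing $\Delta_t-\Delta_0$ in local coordinates, the piece $(g(t)^{ij}-g_0^{ij})\,\partial_i\partial_j f_0$ is bounded because $|g(t)-g_0|\le Ct$, while the Christoffel correction is controlled using Shi's estimate $|\na\Rm(g(s))|\le Cs^{-1/2}$, giving $|\Gamma(t)-\Gamma_0|\le C\sqrt{t}$. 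Combined with $|\na^2 f_0|_{g_0}\le A$ and Lemma \ref{L501}, these produce $|\Delta_t f_0(x)|\le C(1+\sqrt{t}\,d_0(p_0,x))$. Inserting this into
\[
u(x,t) = \int_0^t\!\!\int_M H(x,t,y,s)\,\Delta_s f_0(y)\,dV_{g(s)}(y)\,ds
\]
and using the Gaussian bound of Lemma \ref{L503} to estimate $\int_M H(x,t,y,s)(1+d_0(p_0,y))\,dV_{g(s)}\le C(1+d_0(p_0,x)+\sqrt{t-s})$, integration in $s$ gives $|u(x,t)|\le C(1+d_0(p_0,x))$ on $[0,\ep]$. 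This is \eqref{eq:502}, and \eqref{eq:502a} follows by inserting $f_0(x)\le A\, d_0^2(p_0,x)$.

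For part (b), the key identities on the Ricci flow are
\[
\square f^2 = -2|\na f|^2,\qquad \square |\na f|^2 = -2|\na^2 f|^2,
\]
the second being the parabolic Bochner identity for a solution of $\square f=0$, obtained by a direct computation using $\partial_t g^{ij}=2R^{ij}$. I will multiply these by the Gaussian weight $e^{-\alpha d_0^2(p_0,\cdot)}$, integrate over $M\times[0,\ep]$, and integrate by parts in space, using the exhaustion $\psi$ to justify the vanishing of boundary terms and the polynomial-in-$d_0$ bound on $f$ from part (a). The resulting error terms involve $\Delta_t e^{-\alpha d_0^2}$ and $R\, e^{-\alpha d_0^2}$, both dominated by $C(1+d_0^2)e^{-\alpha d_0^2}$; combined with $f\le C(1+d_0^2)$, these are absorbed against the Gaussian decay, and the boundary data $\int f_0^2 e^{-\alpha d_0^2}\,dV_0$ and $\int|\na f_0|^2 e^{-\alpha d_0^2}\,dV_0$ are finite by Lemma \ref{L501}. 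Applying the first identity, then the second with a slightly smaller weight so that polynomial factors can be absorbed, yields \eqref{eq:502b}.

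The main obstacle I anticipate is the Duhamel estimate in part (a): the extra $d_0(p_0,x)$ in \eqref{eq:502} (beyond $f_0(x)$) forces one to track the deformation of $\Delta_t$ away from $\Delta_0$ under the Ricci flow, and using only the metric equivalence $e^{-C\ep}g_0\le g(t)\le e^{C\ep}g_0$ would lose a full power of $d_0$ and yield only the weaker bound $f(x,t)\le C(1+d_0^2(p_0,x))$. A secondary technical point is the justification of the weighted integrations by parts on the noncompact $M$, for which Lemma \ref{L502} and the Gaussian decay of $e^{-\alpha d_0^2}$ against the polynomial-in-$d_0$ bounds on $f$, $\na f$, $\na^2 f$ are needed.
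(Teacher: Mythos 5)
Your construction of $f$ by the heat-kernel representation and the nonnegativity argument coincide with the paper's, but your two parts diverge from it in ways worth noting. For part (a) you prove the upper bound by a Duhamel argument for $u=f-f_0$, estimating $\Delta_t f_0$ through $|g(t)-g_0|\le Ct$ and $|\Gamma(t)-\Gamma_0|\le C\sqrt t$ (Shi). The paper instead estimates the representation formula directly: it decomposes $M$ into annuli $B_k\setminus B_{k-1}$ with $B_k=B_{g_0}(x,k\sqrt t)$, uses $|\na f_0|\le A\,d_0(p_0,\cdot)$ from Lemma \ref{L501} to get $f_0(y)\le f_0(x)+Ak\bigl(d_0(p_0,x)+k\bigr)$ on $B_k$, and sums against the Gaussian bound of Lemma \ref{L503} together with the volume comparison $|B_k|\le Ce^{Ck}|B_1|$; this yields \eqref{eq:502} without ever differentiating the metric or connection in time. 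Your route can be made to work, but it needs ingredients not contained in the paper's lemmas: a Gaussian bound for $H(x,t,y,s)$ at intermediate times $s>0$ (Lemma \ref{L503} is stated only for $s=0$) and a uniqueness statement on the noncompact $M$ to identify $f-f_0$ with the Duhamel integral. Also, the quadratic upper growth of $f_0$ is Lemma \ref{L501}, not Lemma \ref{lem:501a} (which is the lower bound).

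For part (b) you use the same two identities as the paper, but integrating by parts directly against the weight $e^{-\alpha d_0^2}$ has a genuine gap: $d_0$ is not smooth, and the distributional Laplacian of $e^{-\alpha d_0^2}$ carries a singular part on the cut locus of $p_0$ whose sign is unfavorable for the estimate you need (the singular part of $\Delta_t d_0$ is nonpositive and is multiplied by the negative radial derivative of the weight, producing a nonnegative measure on the cut locus), and under only $|\Rm|\le A$ in the collapsed regime there is no quantitative control of this measure; so the claimed domination of $\Delta_t e^{-\alpha d_0^2}$ by $C(1+d_0^2)e^{-\alpha d_0^2}$ is not justified. In addition, killing the boundary terms at infinity requires some a priori integrability of $|\na f|$ and $|\na^2 f|$, which is exactly what is being proved. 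The paper's order of operations avoids both problems: it first tests $\square f^2=-2|\na f|^2$ and $\square|\na f|^2=-2|\na^2 f|^2$ against the compactly supported smooth cutoff $\phi^r=\eta(\psi/r)$ built from the exhaustion function of Lemma \ref{L502}, for which only $|\square^*\phi^r|\le C$ is needed and no boundary terms arise, obtaining the polynomial ball estimates $\int_0^\ep\int_{B_{g_0}(p_0,r)}|\na f|^2\,dV_t\,dt\le C(1+r^2)^2|B_{g_0}(p_0,Cr)|$ and the analogous bound for $|\na^2 f|^2$, and only afterwards sums these against the Gaussian weight. If you prefer a weighted argument, replace $e^{-\alpha d_0^2}$ by $e^{-\alpha'\psi^2}$ (smooth, with $|\na\psi|+|\na^2\psi|\le C$), but you should still run the compact-cutoff step first to avoid the circular boundary-term issue.
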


\begin{proof}
We define $f$ as
\begin{align*}
f(x, t)=\int_M H(x, t, y, 0) f_0(y) \,dV_0(y).
\end{align*}
(a): The function $f$ is well-defined by Lemma \ref{L501} and Lemma \ref{L503}. To verify this, we estimate $f(x, t)$ as follows. Set $B_k=B_{g_0}(x, k \sqrt t)$ and write
\begin{align*}
0 \le f(x, t) \le \frac{C}{|B_1|} \sum_{k=1}^{\infty} \int_{B_{k}\setminus B_{k-1}} e^{-\frac{(k-1)^2}{C}} f_0(y) \,dV_0(y).
\end{align*}
Using Lemma \ref{L501}, $|\na f_0| \le A(d_0(p_0, x)+k\sqrt t)$ on $B_k$. Hence, for $y \in B_k$,
\begin{align*}
f_0(y) \le f_0(x)+A(d_0(p_0, x)+k\sqrt t) k \sqrt t \le f_0(x)+A k(d_0(p_0, x)+k).
\end{align*}
Substituting this into the integral, we obtain
\begin{align*}
f(x, t) \le & \frac{C}{|B_1|} \sum_{k=1}^{\infty} |B_k| \lc f_0(x)+A k(d_0(p_0, x)+k) \rc e^{-\frac{(k-1)^2}{C}} \\
\le & C \sum_{k=1}^{\infty} e^{Ck} \lc f_0(x)+A k(d_0(p_0, x)+k) \rc e^{-\frac{(k-1)^2}{C}},
\end{align*}
where we have used the volume comparison estimate $|B_k| \le C e^{Ck} |B_1|$. Thus, $f(x, t)$ satisfies the bound
\begin{align*}
f(x, t) \le C \lc f_0(x)+d_0(p_0, x)+1 \rc.
\end{align*}
In addition, \eqref{eq:502a} follows from Lemma \ref{L501}.

(b): We fix a cutoff function $\eta$ on $\R$ such that $\eta=1$ on $(-\infty, 1]$ and $\eta=0$ on $[2, \infty)$. Using this, we define
\begin{align*}
\phi^r(x):=\eta\lc \frac{\psi(x)}{r} \rc
\end{align*}
for $r \ge 1$, where $\psi(x)$ is the exhaustion function from Lemma \ref{L502}. From the properties of $\psi(x)$, it follows that
\begin{align}\label{eq:504a}
|\na \phi^r(x)|+|\na^2 \phi^r(x)| \le C r^{-1} \le C.
\end{align}

Next, we compute:
\begin{align*}
\partial_t \int f^2 \phi \,dV_t =\int (\square f^2) \phi^r-(\square^* \phi^r) f^2 \,dV_t
\end{align*}
where $\square^*:=-\partial_t-\Delta_t+R$. Note that
\begin{align*}
\square f^2=-2|\na f|^2,
\end{align*}
and using \eqref{eq:504a}, we have
\begin{align*}
|\square^* \phi^r| \le C.
\end{align*}

Integrating over time, we obtain for $r \ge 1$:
\begin{align*}
\int_0^{\ep} \int_M 2|\na f|^2 \phi^r \,dV_t dt \le \left. \int f^2 \phi^r \,dV_t \right \vert_{t=0}+C \int_0^{\ep} \int_{\mathrm{supp}(\phi^r)} f^2 \,dV_t dt
\end{align*}

From \eqref{eq:502a} and Lemma \ref{L502}, the right-hand side is bounded by
\begin{align*}
C(1+r^2)^2 |B_{g_0}(p_0, C r)|,
\end{align*}
where we have used the fact that $dV_t$ is comparable to $dV_0$. Thus, we obtain:
\begin{align}\label{eq:507}
\int_0^{\ep} \int_{B_{g_0}(p_0, r)} |\na f|^2 \,dV_t dt \le C(1+r^2)^2 |B_{g_0}(p_0, C r)|.
\end{align}
Using the volume comparison, this implies:
\begin{align}\label{eq:508}
\int_0^{\ep} \int_M |\na f (x)|^2 e^{-\alpha d_0^2(p_0,x)}\,dV_t(x) dt <\infty.
\end{align}

For the second term in \eqref{eq:502b}, note that $\square |\na f|^2=-2|\na^2 f|^2$. Following a similar process as above, we have:
\begin{align*}
\int_0^{\ep} \int_M 2|\na^2 f|^2 \phi^r \,dV_t dt \le \left. \int |\na f|^2 \phi^r \,dV_t \right \vert_{t=0}+C \int_0^{\ep} \int_{\mathrm{supp}(\phi^r)} |\na f|^2 \,dV_t dt.
\end{align*}
Using Lemma \ref{L501} and \eqref{eq:507}, we conclude:
\begin{align*}
\int_0^{\ep} \int_{B_{g_0}(p_0, r)} |\na^2 f|^2 \,dV_t dt \le C (1+r^2+r^4) |B_{g_0}(p_0, C r)|,
\end{align*}
and thus:
\begin{align}\label{eq:511}
\int_0^{\ep} \int_M |\na^2 f (x)|^2 e^{-\alpha d_0^2(p_0,x)}\,dV_t(x) dt <\infty.
\end{align}
Combining \eqref{eq:508} and \eqref{eq:511}, we have completed the proof of \eqref{eq:502b}.
\end{proof}

From now on, $f$ denotes the solution of \eqref{eq:502s} obtained in Proposition \ref{prop:smoothx}. Next, we prove:

\begin{prop} \label{prop:smooth1}
There exists a constant $C=C(n, A)>0$ such that on $M \times [0, \ep]$,
\begin{align}
|\na^2 f| \le C \quad \text{and} \quad |\na f(x, t)| \le & C \lc d_0(p_0, x)+1 \rc. \label{eq:513}
\end{align}
\end{prop}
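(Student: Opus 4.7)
Both estimates are Bernstein-type bounds obtained from Bochner identities along the Ricci flow $(M, g(t))_{t \in [0, \ep]}$ and the parabolic maximum principle, localized via Shi's exhaustion function $\psi$ (Lemma \ref{L502}) and justified by the integrability \eqref{eq:502b}. I would establish the gradient bound first, then use it to control the forcing in the Hessian evolution.

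\textbf{Gradient bound.} Coupling the heat equation $\square f = 0$ with the Ricci flow, a direct computation (using $\partial_t g^{ij} = 2R^{ij}$ and the standard Bochner formula $\Delta |\nabla f|^2 = 2|\nabla^2 f|^2 + 2\langle \nabla f, \nabla \Delta f\rangle + 2\Rc(\nabla f, \nabla f)$) yields
\begin{align*}
\square |\nabla f|^2 = -2|\nabla^2 f|^2 \le 0,
\end{align*}
so $|\nabla f|^2$ is a subcaloric function. The initial data satisfies $|\nabla f_0|^2(y) \le A^2 d_0^2(p_0, y)$ by Lemma \ref{L501}. Using \eqref{eq:502b} to push through a cutoff approximation (so that boundary contributions vanish at infinity), one obtains the heat kernel representation
\begin{align*}
|\nabla f|^2(x, t) \le \int_M H(x, t, y, 0) |\nabla f_0|^2(y) \, dV_0(y).
\end{align*}
Applying $d_0(p_0, y)^2 \le 2 d_0(p_0, x)^2 + 2 d_0(x, y)^2$, the Gaussian upper bound for $H$ in Lemma \ref{L503}, and the standard moment estimate $\int_M H(x, t, y, 0)\, d_0^2(x, y)\, dV_0(y) \le C t$, we arrive at $|\nabla f|(x, t) \le C(d_0(p_0, x) + 1)$.

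\textbf{Hessian bound.} A direct Bochner-type computation for the $2$-tensor $\nabla^2 f$ along the Ricci flow (commuting $\Delta$ past two covariant derivatives and using $\partial_t \Gamma = -\nabla \Rm$-type terms) gives, schematically,
\begin{align*}
\square |\nabla^2 f|^2 \le -2|\nabla^3 f|^2 + c_1 |\Rm|\cdot |\nabla^2 f|^2 + c_2 |\nabla \Rm|\cdot |\nabla f|\cdot |\nabla^2 f|.
\end{align*}
Since $|\Rm(g(t))| \le 2A$ and Shi's estimate provides $|\nabla \Rm(g(t))| \le C(n, A)t^{-1/2}$, Cauchy–Schwarz and the gradient bound from the previous step yield
\begin{align*}
\square |\nabla^2 f|^2 \le C_1 |\nabla^2 f|^2 + C_2 \cdot t^{-1}(1 + d_0^2(p_0, x)).
\end{align*}
Consider $u := e^{-C_1 t}|\nabla^2 f|^2$, which satisfies $\square u \le C_2 t^{-1}(1 + d_0^2(p_0, x))$ with $u(\cdot, 0) \le A^2$. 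Using $\psi$ from Lemma \ref{L502} and the cutoff $\phi^r = \eta(\psi/r)$ exactly as in the proof of Proposition \ref{prop:smoothx}(b), one runs the localized maximum principle on the compactly supported function $\phi^r(u - A^2 - C_3 t)$ (with $C_3$ chosen to dominate the $t^{-1}$ forcing after integration). The integrability \eqref{eq:502b} controls the contribution from $\mathrm{supp}(\nabla \phi^r)$ and permits $r \to \infty$, yielding $|\nabla^2 f| \le C(n, A)$ on $M \times [0, \ep]$.

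\textbf{Main obstacle.} The chief technical difficulty is the $t^{-1/2}$ singularity in $|\nabla \Rm|$ supplied by Shi's estimates, which produces a $t^{-1}$ factor in the forcing for $|\nabla^2 f|^2$. Because $f_0$ is only $C^2$ (rather than $C^\infty$ with Shi-type bounds), this forcing must be absorbed by a time-integrable quantity rather than treated pointwise; carrying this out rigorously requires pairing the Bochner inequality with Shi's exhaustion $\psi$ and the global $L^2$-type integrability \eqref{eq:502b}, and is where the careful cutoff argument is unavoidable.
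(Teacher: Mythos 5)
Your gradient bound is correct and is essentially the paper's argument: $\square |\na f|^2=-2|\na^2 f|^2\le 0$, the heat kernel representation justified by \eqref{eq:502b}, and the Gaussian bound of Lemma \ref{L503} combined with $|\na f_0(y)|\le A\,d_0(p_0,y)$ give \eqref{eq:513} for $|\na f|$.

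The Hessian part, however, has a genuine gap, and the ``main obstacle'' you describe is an artifact of a lossy computation. If $\square f=0$ along the Ricci flow, the $\na\Rm$ terms do not survive: the contribution of $\partial_t\Gamma^k_{ij}=-g^{kl}\lc \na_i R_{jl}+\na_j R_{il}-\na_l R_{ij}\rc$ to $\partial_t\na^2 f$ cancels exactly against the $\na\Rm*\na f$ terms produced when commuting $\na^2$ past $\Delta$, so that
\begin{align*}
(\partial_t-\Delta_L)\lc \na^2 f\rc=\na^2(\square f)=0,
\end{align*}
where $\Delta_L$ is the Lichnerowicz Laplacian. Hence $\square|\na^2 f|^2\le -2|\na^3 f|^2+C(n,A)|\na^2 f|^2$, with a forcing controlled solely by $|\Rm|\le 2A$ --- no $t^{-1/2}$ singularity and no spatial growth --- and the maximum principle (whose applicability on the noncompact $M$ is exactly what \eqref{eq:502b} guarantees), together with $|\na^2 f_0|\le A$, yields $|\na^2 f|\le C(n,A)$. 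This is the paper's proof. By contrast, your scheme cannot be repaired as written: after Cauchy--Schwarz your forcing is $C_2\,t^{-1}\lc 1+d_0^2(p_0,x)\rc$, which is not integrable in time at $t=0$, so no barrier of the form $A^2+C_3t$ (whose time derivative is the constant $C_3$) can dominate it near $t=0$; moreover the forcing grows quadratically in space, so even the localized maximum principle with $\phi^r$ could at best produce a bound depending on $d_0(p_0,x)$ (or on $r$), not the uniform constant $C(n,A)$ claimed in \eqref{eq:513}. Keeping $|\na^2 f|$ rather than $|\na^2 f|^2$ only improves the singularity to $t^{-1/2}(1+d_0)$, which is time-integrable but still spatially unbounded, again falling short of the uniform bound. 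The cancellation built into $\Delta_L$ is the missing idea.
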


\begin{proof}
We begin by considering the evolution equation for $\na^2 f$:
\begin{align*}
(\partial_t-\Delta_L) (\na^2 f)=\na^2 (\square f)=0,
\end{align*}
where $\Delta_L$ denotes the Lichnerowicz Laplacian. Using the curvature bounds, we compute:
\begin{align*}
\square |\na^2 f|^2 \le -2|\na^3 f|^2+C|\na^2 f|^2 \le C|\na^2 f|^2.
\end{align*}
By the maximum principle (see \cite[Theorem 7.42]{CLN06}), we conclude
\begin{align*}
|\na^2 f| \le C. 
\end{align*}
Note that the assumptions required for applying the maximum principle are guaranteed by Proposition \ref{prop:smoothx} (b).

Next, since $\square |\na f|^2=-2|\na^2 f|^2 \le 0$, we have
\begin{align*}
|\na f(x, t)|^2 \le \int H(x, t, y,0) |\na f_0 (y)|^2 \,dV_0(y).
\end{align*}

With the help of Lemma \ref{L501} and Lemma \ref{L502}, and following a similar argument as in the proof of \eqref{eq:502}, we conclude that
\begin{align*}
|\na f(x, t)|^2 \le C(d_0(p_0, x)+1)^2.
\end{align*}
\end{proof}

\begin{prop} \label{prop:smooth2}
There exists a small constant $\ep_1=\ep_1(n, A ,\kappa) \le \min\{\ep, \frac{1}{4\kappa}\}$ such that on $M \times [0, \ep_1]$,
\begin{align*}
\Rc(g(t))+ \frac{1}{1-2\kappa t} \na^2_{g(t)} f(t) \ge \frac{\kappa}{2} g(t).
\end{align*}
\end{prop}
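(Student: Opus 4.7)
The plan is to apply a tensor maximum principle to the symmetric $(0,2)$-tensor
\[
L_{ij}(x,t) \;:=\; \Rc_{ij}(g(t)) + a(t)\,\nabla^2_{ij} f(t) - \kappa\,a(t)\,g_{ij}(t), \qquad a(t):=\tfrac{1}{1-2\kappa t}.
\]
At $t=0$ the Bakry-\'Emery hypothesis (iii) in Definition~5.1 gives $L_{ij}(x,0)=\Rc(g_0)+\nabla^2 f_0-\kappa g_0\ge 0$, and $L\ge 0$ implies the stronger bound $\Rc+a\nabla^2 f\ge \kappa a\,g\ge \kappa g\ge \tfrac{\kappa}{2}g$. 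The gap between $\kappa$ and $\tfrac{\kappa}{2}$ in the statement is what allows the error terms in the parabolic estimate to be absorbed over the interval $[0,\ep_1]$.

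First I would derive the evolution of $L$. Under $\partial_t g=-2\Rc$ and $\square f=0$ one has $\partial_t\Rc=\Delta_L\Rc$, and from $\partial_t\Gamma^k_{ij}=-(\nabla_i R_j^k+\nabla_j R_i^k-\nabla^k R_{ij})$ together with the standard commutator formula for $\nabla^2\Delta f$,
\[
\partial_t\nabla^2_{ij}f \;=\; \Delta\nabla^2_{ij}f+2R_{ikjl}\nabla^k\nabla^l f-R_i^k\nabla^2_{kj}f-R_j^k\nabla^2_{ik}f+\mathcal{E}_{ij},
\]
where $\mathcal{E}_{ij}$ collects the $(\nabla\Rm)\cdot\nabla f$ terms. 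The key identity is $a'(t)=2\kappa a(t)^2$. Substituting $\Rc=L-a\nabla^2 f+\kappa a g$ and $a\nabla^2 f=L-\Rc+\kappa a g$ to eliminate $\Rc$ and $\nabla^2 f$ individually in the algebraic part of $\partial_t L$, one finds that the coefficient of the residual $\Rc$ is $2\kappa a-2\kappa a=0$ and the coefficient of $g$ is $2\kappa a(\kappa a)-(\kappa a)'=2\kappa^2 a^2-2\kappa^2 a^2=0$. This algebraic cancellation is the reason the coefficients $a(t)$ and $\kappa a(t)$ are chosen exactly as they are, and the outcome is
\[
\partial_t L_{ij} \;=\; \Delta L_{ij}+2R_{ikjl}L^{kl}-R_i^k L_{kj}-R_j^k L_{ik}+2\kappa a(t)\,L_{ij}+a(t)\,\mathcal{E}_{ij},
\]
in which the curvature part satisfies Hamilton's null-eigenvector condition and the linear term $2\kappa a L$ vanishes on null vectors of $L$.

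Second, I would apply Hamilton's maximum principle for symmetric $(0,2)$-tensors. The only obstruction to propagating $L\ge 0$ is the error $a\mathcal{E}$. Its pointwise norm is controlled by $C(n,A,\kappa)(1+|\nabla f|)$ using Shi's estimate $|\nabla\Rm|(t)\le C(n,A)t^{-1/2}$ (integrable in $t$) together with the growth $|\nabla f|(x,t)\le C(1+d_0(p_0,x))$ from Proposition~5.5. Because $M$ is noncompact and $|\nabla f|$ is unbounded, the principle is applied on a compact ball $B_{g_0}(p_0,R)$ by localizing with the exhaustion function $\psi$ of Lemma~5.2 and its cutoff $\phi^r$ from Proposition~5.4; boundary contributions are controlled by the uniform $C^0$ bounds $|\Rm|\le 2A$ and $|\nabla^2 f|\le C(n,A)$. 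Propagating the perturbed inequality and letting $R\to\infty$ yields $L(x,t)\ge -C(n,A,\kappa)\,t\,g(t)$, hence
\[
\Rc(g(t))+a(t)\nabla^2 f(t)\ge \bigl(\kappa a(t)-C(n,A,\kappa)\,t\bigr)g(t)\ge \tfrac{\kappa}{2}g(t)
\]
provided $\ep_1\le \min\{\ep,\,1/(4\kappa),\,\kappa/(2C(n,A,\kappa))\}$.

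The main obstacle is rigorously executing the tensor maximum principle on the noncompact manifold: the error $\mathcal{E}$ grows linearly at infinity, and Shi's estimate for $|\nabla\Rm|$ is singular at $t=0$. This is handled by pairing the spatial cutoff with the observation that $\int_0^{\ep_1} t^{-1/2}\,dt$ is finite, and by absorbing the cutoff-induced boundary terms via the uniform $C^0$ control of $\Rc$ and $\nabla^2 f$ available from Proposition~5.5.
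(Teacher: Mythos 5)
Your overall strategy --- evolving the Bakry-\'Emery tensor with the time-dependent weight $a(t)=(1-2\kappa t)^{-1}$ and applying a parabolic maximum principle --- is the same as the paper's (your $L$ is just $a(t)$ times the paper's tensor $h=(1-2\kappa t)\Rc+\na^2 f-\kappa g$), but two of your key steps do not hold as stated. First, the claim that the reaction term $2R_{ikjl}L^{kl}-R_i^kL_{kj}-R_j^kL_{ik}$ satisfies Hamilton's null-eigenvector condition is false under a mere two-sided bound $|\Rm|\le 2A$: at a null vector $v$ of $L\ge 0$ it reduces to $2R_{ikjl}v^iv^jL^{kl}$, a sum of sectional-type curvatures weighted by the nonnegative eigenvalues of $L$, which can be negative when the curvature has no sign (this is exactly why nonnegativity of Ricci-type tensors fails to be preserved by Ricci flow in dimension $\ge 4$). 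So $L\ge 0$ is not propagated and ``the only obstruction is $a\mathcal{E}$'' is unjustified. This defect is repairable in the paper's style: since $|\Rm|\le 2A$ and $|\na^2 f|\le C$ (Proposition \ref{prop:smooth1}) give $|L|\le C(n,A,\kappa)$, one simply bounds the reaction term by a constant and accepts a loss of order $-Ct$, which is all the statement requires.

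Second, and more seriously, your error term $\mathcal{E}$ does not actually occur: for a solution of $\square f=0$ along the Ricci flow, the $(\na\Rm)\cdot\na f$ contributions from $\partial_t\Gamma$ cancel exactly (via the contracted second Bianchi identity) against those from commuting $\Delta$ with $\na^2$, so that $(\partial_t-\Delta_L)\na^2 f=0$; this is precisely what makes the paper's identity \eqref{eq:514} exact, with no first-order error. Your substitute treatment of $\mathcal{E}$ cannot close the argument as sketched: the bound $|\mathcal{E}|\le C\bigl(1+d_0(p_0,x)\bigr)t^{-1/2}$ grows linearly in space (because $|\na f|$ does, and Shi's estimate for $|\na\Rm|$ is only uniform in $x$), and a localized maximum principle with such a forcing term yields at best a lower bound for $L$ degrading like $-C\bigl(1+d_0(p_0,x)\bigr)\sqrt{t}$, not the uniform $-C(n,A,\kappa)\,t\,g$ you assert; hence no single $\ep_1(n,A,\kappa)$ would work on all of the noncompact manifold $M$. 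The missing idea is the cancellation itself. The paper then avoids Hamilton's tensor maximum principle altogether: it runs a barrier argument on the minimal eigenvalue $\lambda$ of $h$, obtaining $\partial_t\lambda\ge\Delta\lambda-C(n,A,\kappa)$ from the curvature bound and $|\na^2 f|\le C$, and concludes $\lambda\ge -Ct$ by the scalar maximum principle (whose application on noncompact $M$ is justified by these uniform bounds), which gives the proposition once $\ep_1$ is small.
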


\begin{proof}
A direct calculation shows that
\begin{align} \label{eq:514} 
(\partial_t-\Delta_L) \lc (1-2\kappa t) \Rc+\na^2 f-\kappa g \rc =0,
\end{align}
where $\Delta_L$ is the Lichnerowicz Laplacian. Define the $(0,2)$-tensor
\begin{align*}
h=(1-2\kappa t) \Rc+\na^2 f-\kappa g,
\end{align*}
and let $\lambda$ denote the minimal eigenvalue of $h$. By assumption, $\lambda \ge 0$ at $t =0$. 

Fix $(x_0, t_0) \in M \times [0, \ep]$, and let $v$ be a unit eigenvector corresponding to $\lambda$ at $(x_0,t_0)$. Extend $v$ to be constant in time for $t \in [t_0-\delta, t_0+\delta]$, and then extend it spatially along each geodesics originating at $(x_0, t)$, using parallel transport with respect to $g(t)$. 

Define 
\begin{align*}
\phi(x, t)=h(v(x,t), v(x, t)),
\end{align*}
which is defined on a small spacetime neighborhood of $(x_0, t_0)$. By substituting into the evolution equation \eqref{eq:514}, we find that at $(x_0, t_0)$,
\begin{align*}
\partial_t \phi= \Delta \phi+2\Rm(h)(v, v)-2g( \Rc(v), h(v)) \ge \Delta \phi-C(n, A ,\kappa),
\end{align*}
where the last inequality follows from the curvature bounds and the estimate \eqref{eq:513}. 

Thus, in the barrier sense, we obtain the inequality
\begin{align*} 
\partial_t \lambda \ge \Delta \lambda-C(n, A ,\kappa).
\end{align*}
Applying the maximum principle, we conclude that
\begin{align*}
(1-2\kappa t) \Rc+\na^2 f\ge (\kappa-Ct) g.
\end{align*}

Therefore, the conclusion follows if $t$ is small.
\end{proof}

We also need the following Shi-type estimates for higher-order terms (see \cite{Shi89A}).

\begin{prop} \label{prop:smooth3}
For each $k \ge 0$, there exists a constant $C_k=C_k(n, A)>0$ such that for $t \in (0, \ep]$,
\begin{align*} 
|\na^k \Rm|^2 +|\na^{k+2} f|^2 \le C_k t^{-k-1}.
\end{align*}
\end{prop}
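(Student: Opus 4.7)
The bound $|\na^k \Rm|^2 \le C_k t^{-k-1}$ follows immediately from the classical local derivative estimates of Shi applied to the Ricci flow $(M, g(t))_{t \in [0, \ep]}$, using the uniform curvature bound $|\Rm(g(t))| \le 2A$ already established before Lemma \ref{L502}. In fact Shi's estimates give the sharper bound $|\na^k \Rm|^2 \le C_k t^{-k}$ for $k \ge 1$, and the case $k=0$ is trivial by absorbing $\ep$ into the constant. Thus the only new content is the estimate $|\na^{k+2} f|^2 \le C_k t^{-k-1}$, which we prove by induction on $k$ using a Bernstein--Shi type maximum principle.

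The base case $k=0$ is exactly Proposition \ref{prop:smooth1}, which gives $|\na^2 f| \le C$ on $M \times [0, \ep]$, hence $|\na^2 f|^2 \le C^2 \le C_0 t^{-1}$ on $(0, \ep]$. For the inductive step, one first derives the schematic evolution equation for $\na^{k+2} f$. Using the commutator identity $[\square, \na] T = \Rm \ast \na T + \na \Rm \ast T$ (valid on Ricci flow for any tensor $T$), together with $\square f = 0$, iteration yields an equation of the form
\begin{align*}
\square(\na^{k+2} f) = \sum_{\substack{a+b \le k \\ a,b \ge 0}} \na^a \Rm \ast \na^{b+2} f,
\end{align*}
and therefore, after pairing with $\na^{k+2} f$ and using Kato's inequality,
\begin{align*}
\square |\na^{k+2} f|^2 \le -2|\na^{k+3} f|^2 + C \sum_{\substack{a+b \le k \\ a,b \ge 0}} |\na^a \Rm|\,|\na^{b+2} f|\,|\na^{k+2} f|.
\end{align*}
By the inductive hypothesis and the already established Shi estimates on $|\na^a \Rm|$, every cross term with $a+b < k$ is controlled by a constant times $t^{-s}$ with $s$ strictly less than $k+1$, while the top-order terms $a+b=k$ are absorbed either into $-2|\na^{k+3} f|^2$ via Cauchy--Schwarz or into the next stage of the induction.

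Following the Bernstein--Shi recipe, one then introduces an auxiliary function of the form
\begin{align*}
F = \bigl(B + t^{k}|\na^{k+1} f|^2\bigr)\, t^{k+1}|\na^{k+2} f|^2,
\end{align*}
where $B = B(n,A)$ is chosen large enough depending on the inductive bounds, and multiplies by a spatial cutoff built from the exhaustion function $\psi$ of Lemma \ref{L502}. A direct computation of $\square F$, combined with the inductive bounds on lower-order quantities and Shi's curvature estimates, shows that $\square F \le 0$ at an interior maximum, from which the maximum principle yields a uniform bound on $F$ and hence on $t^{k+1}|\na^{k+2} f|^2$. The noncompactness of $M$ is handled by the spatial cutoff, and the necessary integrability needed to apply the maximum principle is provided by Proposition \ref{prop:smoothx}(b).

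The main obstacle is the bookkeeping: one must carefully track the schematic commutator expansion to verify that no term with derivative order exceeding $k+2$ (on $f$) or $k$ (on $\Rm$) appears with a bad sign, and one must choose the exponents in the auxiliary function $F$ so that the good term $-2|\na^{k+3} f|^2$ dominates all the cross terms produced by differentiating the weight. Once the right auxiliary function is fixed, the argument is a standard cascade already carried out in \cite{Shi89A} for $\Rm$ alone, now adapted to the coupled system $(g(t), f(t))$.
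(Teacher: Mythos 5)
Your argument is correct and is essentially the approach the paper intends: the paper states this proposition without proof, simply citing Shi's estimates \cite{Shi89A}, and your Bernstein--Shi induction (Shi's derivative bounds for $\Rm$ from $|\Rm(g(t))|\le 2A$, plus the cutoff-localized maximum principle applied to a weighted auxiliary function for $\na^{k+2}f$, with base case Proposition \ref{prop:smooth1}) is exactly the standard cascade that citation refers to, adapted to the coupled heat equation. The schematic evolution $\square(\na^{k+2}f)=\sum_{a+b\le k}\na^a\Rm\ast\na^{b+2}f$ and the exponent bookkeeping you describe do close up, so this fills in the omitted details rather than deviating from the paper.
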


We now prove the following result.

\begin{thm} \label{thm:smooth}
For any $(M^n, g_0, f_0,p_0) \in \mathcal M(n, A, \kappa)$, there exists $(M^n, g, f, p)$ satisfying the following properties:
\begin{enumerate}[label=\textnormal{(\alph{*})}]
\item There exists $A'=A'(n, A, \kappa)>0$ such that
\begin{align*}
(M^n, g, f, p) \in \mathcal M(n, A', \kappa/2).
\end{align*}

\item For each $k \ge 0$, there exists $C_k=C_k(n, A, \kappa)>0$ such that
\begin{align*}
|\na_{g}^k \Rm(g)|^2 +|\na_{g}^{k+2} f|^2 \le C_k.
\end{align*}

\item There exists $C=C(n, A, \kappa)>1$ such that
\begin{align*}
C^{-1} \mathrm{Vol}_{g_0} \lc B_{g_0}(p_0, 1) \rc \le \mathrm{Vol}_{g} \lc B_{g}(p, 1) \rc \le C \mathrm{Vol}_{g_0} \lc B_{g_0}(p_0, 1) \rc.
\end{align*}
\end{enumerate}
\end{thm}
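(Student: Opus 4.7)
The plan is to set $g := g(t_0)$ and $f := \tilde f - \tilde f(p)$, where $t_0 := \ep_1(n, A, \kappa)$ is the time produced by Proposition \ref{prop:smooth2}, $\tilde f := f(\cdot, t_0)/(1-2\kappa t_0)$ is the rescaled heat solution from Proposition \ref{prop:smoothx}, and $p$ is a minimum point of $\tilde f$ to be constructed below. Since $t_0 \le 1/(4\kappa)$, the denominator satisfies $1-2\kappa t_0 \ge 1/2$. For assertion (a), the curvature bound $|\Rm(g)| \le 2A$ is the short-time Ricci flow estimate recalled at the start of the section; the Hessian bound $|\na^2_g \tilde f| \le 2\,C(n, A)$ comes from Proposition \ref{prop:smooth1}; and dividing the Bakry-\'Emery inequality of Proposition \ref{prop:smooth2} by $1-2\kappa t_0 > 0$ yields $\Rc(g) + \na^2_g \tilde f \ge (\kappa/2) g$. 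Each of these is preserved by the shift $\tilde f \mapsto \tilde f - \tilde f(p)$, so once $p$ is in hand all conditions defining $\mathcal M(n, A', \kappa/2)$ are satisfied for a constant $A' = A'(n, A, \kappa)$.

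The substantive step is producing a minimum point $p$ of $\tilde f$. Since $f_0 \ge 0$ and the heat equation preserves nonnegativity, $\tilde f \ge 0$. Combining Lemma \ref{lem:501a} applied to $(g_0, f_0, p_0)$, which gives $f_0(y) \ge (\kappa/2) d_{g_0}(p_0, y)^2 - C d_{g_0}(p_0, y) - C$, with the heat kernel representation
\[
\tilde f(x) = \frac{1}{1-2\kappa t_0} \int_M H(x, t_0, y, 0)\, f_0(y) \, dV_0(y),
\]
and the Gaussian upper bound of Lemma \ref{L503}, I mimic the manipulations in the proof of Proposition \ref{prop:smoothx}(a) to obtain the second moment estimate $\int_M H(x, t_0, y, 0)\, d_{g_0}(x, y)^2 \, dV_0(y) \le C t_0$. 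Applying Cauchy-Schwarz and the triangle inequality $d_{g_0}(p_0, y)^2 \ge d_{g_0}(p_0, x)^2 - 2 d_{g_0}(p_0, x)\, d_{g_0}(x, y)$ under the integral then yields the quadratic lower bound
\[
\tilde f(x) \ge \frac{\kappa}{2} d_{g_0}(p_0, x)^2 - C d_{g_0}(p_0, x) - C.
\]
Since $g$ and $g_0$ are uniformly comparable (see the next paragraph), $\tilde f$ is proper on $M$, attains its infimum at some $p$, and this minimum point satisfies $d_g(p_0, p) \le C(n, A, \kappa)$. Subtracting $\tilde f(p)$ defines the normalized $f$ with $f(p) = 0$.

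Assertion (b) is then immediate from Proposition \ref{prop:smooth3} applied at the fixed positive time $t_0 = \ep_1(n, A, \kappa)$, since $t_0^{-k-1}$ becomes a constant depending only on $n, A, \kappa$. For assertion (c), the Ricci flow equation $\partial_t g = -2 \Rc(g(t))$ together with the curvature bound $|\Rc(g(t))| \le C(n, A)$ on $[0, t_0]$ gives the pointwise metric comparison $e^{-C t_0} g_0 \le g \le e^{C t_0} g_0$, so both volume forms and distance functions are uniformly equivalent on $M$. Combined with $d_g(p_0, p) \le C(n, A, \kappa)$, each of $B_{g_0}(p_0, 1)$ and $B_g(p, 1)$ lies between two balls of fixed radius around $p_0$ measured in either metric, and a two-sided application of the Bishop-Gromov volume comparison theorem under the bounds $|\Rm(g_0)|, |\Rm(g)| \le 2A$ gives the claimed two-sided comparison of unit ball volumes. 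The main obstacle is the coercivity estimate for $\tilde f$ in the second paragraph; every remaining assertion reduces to substituting $t_0 = \ep_1$ into the preparatory propositions of this section.
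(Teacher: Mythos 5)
Your construction coincides with the paper's for everything except one step: you take the same $g=g(\ep_1)$ and the same rescaled heat solution, get (a) from Propositions \ref{prop:smooth1} and \ref{prop:smooth2} (note that no division of the inequality in Proposition \ref{prop:smooth2} is needed or correct --- with $\tilde f := f(\cdot,\ep_1)/(1-2\kappa\ep_1)$ that inequality already reads $\Rc(g)+\na^2_g\tilde f\ge \tfrac{\kappa}{2}g$), get (b) by evaluating Proposition \ref{prop:smooth3} at $t=\ep_1$, and get (c) from the uniform equivalence of $g$ and $g_0$ plus volume comparison, exactly as the paper does. Where you genuinely diverge is the coercivity of $\tilde f$, which is what produces the minimum point $p$ with $d_g(p_0,p)\le C$. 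The paper re-derives coercivity intrinsically at time $\ep_1$: it records $|\na_g \tilde f(p_0)|+\tilde f(p_0)\le C$ and $|\na^2_g\tilde f|\le C$, reruns the second-variation argument of Lemma \ref{L501} with the new Bakry-\'Emery inequality to get $|\na_g\tilde f(x)|\ge \tfrac{\kappa}{2}d_g(p_0,x)-C$, and then repeats the gradient-flow argument of Lemma \ref{lem:501a} using $\tilde f\ge 0$. You instead propagate the $t=0$ coercivity of $f_0$ (Lemma \ref{lem:501a}) through the heat kernel via the Gaussian bound of Lemma \ref{L503}; this is a legitimate and arguably more direct alternative, and your second-moment estimate $\int_M H(x,\ep_1,y,0)\,d_{g_0}(x,y)^2\,dV_0(y)\le C\ep_1$ does follow from the annulus decomposition used in Proposition \ref{prop:smoothx}(a).

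However, as written your coercivity step has one unstated but essential ingredient. After inserting the triangle inequality, the leading term is $\tfrac{\kappa}{2}\,d_{g_0}(p_0,x)^2\int_M H(x,\ep_1,y,0)\,dV_0(y)$, so you need a positive \emph{lower} bound on the heat-kernel mass; the Gaussian upper bound of Lemma \ref{L503} only controls the mass and the moments from above, which handles the error terms but not the coercive one. The needed fact is standard --- $\int_M H(x,t,y,0)\,dV_0(y)=1$, since the conjugate heat kernel measure is conserved along the Ricci flow, or equivalently because the constant function $1$ is the unique bounded solution with initial data $1$ by the maximum principle of \cite[Theorem 7.42]{CLN06} --- but it must be stated, as without it the quadratic lower bound does not follow. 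You should also note explicitly that $\tilde f(p_0)\le C$ (from \eqref{eq:502a} at $x=p_0$), which is what converts coercivity into $d_g(p_0,p)\le C$ for the minimum point. With these two points supplied, your argument is complete and the remainder of (a)--(c) goes through as in the paper.
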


\begin{proof}
For the constant $\ep_1$ in Proposition \ref{prop:smooth2}, we define
\begin{align*}
g:=g(\cdot,\ep_1) \quad \text{and} \quad f:=\frac{1}{1-2\kappa \ep_1} f(\cdot,\ep_1).
\end{align*}
From Proposition \ref{prop:smooth2}, we have
\begin{align}\label{eq:518}
\Rc(g)+ \na_{g}^2 f \ge \frac{\kappa}{2} g.
\end{align}
Additionally, from Propositions \ref{prop:smooth1} and \ref{prop:smoothx} (a):
\begin{align}\label{eq:519}
|\na_{g}^2 f| \le C(n,A, \kappa) \quad \text{and} \quad |\na_{g} f(p_0)|+f(p_0) \le C(n,A, \kappa).
\end{align}
Using the arguments in Lemma \ref{L501}, we deduce for any $x \in M$,
\begin{align*}
|\na_{g} f(x)| \ge \frac{\kappa}{2} d_{g}(p_0, x)-C(n,A, \kappa).
\end{align*}
As $f \ge 0$, it follows (via a similar approach to Lemma \ref{lem:501a}) that:
\begin{align*} 
f(x) \ge \frac{\kappa}{4} d_{g}^2(p_0, x)-C(n,A, \kappa) d_{g}(p_0, x)-C(n,A, \kappa).
\end{align*}
Consequently, $f$ attains its minimum at some point $p \in M$, and
\begin{align*} 
d_{g}(p, p_0) \le C(n,A, \kappa).
\end{align*}
By adding a constant to $f$, we can normalize $f(p)=0$. 

Assertion (a) follows directly from \eqref{eq:518} and \eqref{eq:519}. Assertion (b) is established using Proposition \ref{prop:smooth3}. Finally, assertion (c) follows from the standard volume comparison argument, using the fact that $d_{g_0}$ and $d_{g}$ (as well as $dV_{g_0}$ and $dV_{g}$) are comparable.
\end{proof}

With these preparations, we are now ready to prove Theorem \ref{thm:002}, restated here for convenience.

\begin{thm}
There exists a constant $C=C(n, A, \kappa)>0$ such that for any $(M^n, g, f,p) \in \mathcal N(n, A, \kappa)$,
\begin{align*}
\mathrm{Vol}_{g} \lc B_{g}(p, 1) \rc \ge C.
\end{align*}
\end{thm}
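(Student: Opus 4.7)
The plan is to argue by contradiction, reducing to the smoothed case in which the entire contradiction argument of Theorem~\ref{thm:001} can be re-run with the Ricci shrinker equality replaced by the Bakry-\'Emery inequality. Suppose, toward contradiction, there is a sequence $(M_i^n, g_i, f_i, p_i) \in \mathcal N(n, A, \kappa)$ with $\mathrm{Vol}_{g_i}(B_{g_i}(p_i,1)) \to 0$. Apply Theorem~\ref{thm:smooth} to each member to obtain a replacement sequence $(M_i^n, g_i', f_i', p_i') \in \mathcal N(n, A', \kappa/2)$, on the same underlying manifolds (hence still simply connected with finite $\pi_2$), now satisfying the higher-order bounds $|\na_{g_i'}^k \Rm(g_i')|, |\na_{g_i'}^{k+2} f_i'| \le C_k(n,A,\kappa)$ for every $k\ge 0$, while the unit-ball volumes remain uniformly comparable. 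In particular, the replaced sequence still collapses at $p_i'$.

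Since the entire collapsing theory of Section~3 relies only on uniform bounds on $|\na^\ell \Rm|$ and $|\na^\ell f|$ together with the topological assumptions, Propositions~\ref{prop:frame}--\ref{prop:topo5}, Theorem~\ref{thm:fib}, and the diagonal-subsequence Theorem~\ref{thm:dia} apply verbatim to $(M_i^n, g_i', f_i', p_i')$. After passing to a subsequence, we obtain a $T^k$-action on $U_i$, $(G\times T^k)$-invariant metrics and functions on $F_i$, and a limit space $(X, d_X, f_X, p_X)$ with an equivariant frame or spin bundle $(Y, g_Y, f_Y) \to X$. The Bakry-\'Emery inequality $\Rc(g_i') + \na^2_{g_i'} f_i' \ge (\kappa/2) g_i'$, being a pointwise inequality preserved under the $T^k$-averaging of Proposition~\ref{prop:ave} (averaging a tensor against an isometric group action), descends to the $T^k$-invariant approximations and persists in the $C^\infty$ limit.

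We then execute the unwrapping construction of Section~4 without modification: build the principal $S^1$-bundle $F/T^{k-1}\to Y$, unwrap the circle fibers (the Euler class vanishes in the limit because the circle lengths tend to zero by Lemma~\ref{L401}, so the obstruction~\eqref{eq:406} disappears), and descend the resulting $\R$-bundle $(Z, g_Z, f_Z)$ to a complete metric measure space $(W, g_W, f_W)$ on which the $\R$-action is isometric (Lemma~\ref{L404}). The only place in Section~4 where the Ricci shrinker equation was used is the local equation~\eqref{eq:410a}, which now becomes
\begin{equation*}
\Rc(h^{\alpha}) + \na^2_{h^{\alpha}} f^{\alpha} \ge \frac{\kappa}{2} h^{\alpha}.
\end{equation*}
Combining with Lemmas~\ref{L405} and~\ref{L405a}, which produce inequalities in precisely the direction we need, yields on the regular part $\mathcal R_W$
\begin{equation*}
\Rc(g_W) + \na^2_{g_W} \bar f_W \ge \frac{\kappa}{2} g_W,
\end{equation*}
where $\bar f_W = f_W + \mu$ and $\mu$ is the fiber-density function.

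The contradiction then follows exactly as in Section~4: fix a regular $w_0$ with $\overline{B_{g_W}(w_0,2)}\subset \mathcal R_W$, choose $w_l$ on the same $\R$-orbit with $d_W(w_0,w_l)\to\infty$ (guaranteed by Lemma~\ref{L404}), and apply the second variation formula along a minimizing geodesic with the cut-off $\eta$. Since the $\R$-action is isometric, $\sup_{B_{g_W}(w_l,1)}|\na \bar f_W| = \sup_{B_{g_W}(w_0,1)}|\na \bar f_W|$ is a finite constant independent of $l$, so the analogue of estimate~\eqref{eq:413} forces the left-hand side to grow like $\tfrac{\kappa}{2} d_l$ while the right-hand side remains bounded. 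The main obstacle is confirming that the Bakry-\'Emery inequality is genuinely preserved through each step: Ricci-flow smoothing (handled by Proposition~\ref{prop:smooth2}), $T^k$-averaging (pointwise, using isometric invariance of the action), the $C^\infty$-limit to $h^{\alpha}$, and the final Riemannian submersion by $T^{k-1}/H$; each requires a short but separate verification, and the slight loss in the constant ($\kappa \mapsto \kappa/2$) from the smoothing is harmless since only positivity of the right-hand side is used in the second variation step.
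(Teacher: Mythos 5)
Your proposal is correct and follows essentially the same route as the paper: a contradiction argument, smoothing via Theorem \ref{thm:smooth}, re-running the Section 3--4 collapsing and unwrapping machinery verbatim with \eqref{eq:410a} weakened to the Bakry-\'Emery inequality with constant $\kappa/2$, and concluding with the same second-variation estimate \eqref{eq:413}. One small caveat on wording: the inequality is not literally preserved pointwise by the $T^k$-averaging (the $T^k$-action is not isometric for the pre-averaged metric); the correct mechanism is the $C^l$-closeness in Theorem \ref{thm:dia} (b), so the inequality is recovered only in the limit $h^{\alpha}$ --- which is exactly what your ``persists in the $C^\infty$ limit'' clause supplies.
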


\begin{proof}
We proceed by contradiction. Assume there exists a sequence $(M_i^n, g_i, f_i,p_i) \in \mathcal N(n, A, \kappa)$ such that
\begin{align*}
\mathrm{Vol}_{g_i} \lc B_{g_i}(p_i, 1) \rc \to 0.
\end{align*}
By Theorem \ref{thm:smooth}, we may assume that for some constants $C_k=C_k(n, A, \kappa)$,
\begin{align*}
|\na_{g_i}^k \Rm(g_i)|^2 +|\na_{g_i}^{k+2} f_i|^2 \le C_k.
\end{align*}

The rest of the proof follows verbatim as the proof of Theorem \ref{thm:001}. The estimates for the potential function and its gradient are derived using Lemma \ref{L501} and Lemma \ref{L502}. 

The main distinction here is the absence of the Ricci shrinker equation \eqref{E100}. Instead, we rely on the weaker inequality:
\begin{align*}
\Rc(g_i)+\na^2_{g_i} f_i \ge \kappa g_i.
\end{align*}
This affects the formulation of \eqref{eq:410a}, which now becomes
\begin{align*}
\Rc( h^{\alpha})+\na_{ h^{\alpha}}^2( f^{\alpha}) \ge \kappa  h^{\alpha}.
\end{align*}
However, the key inequalities \eqref{eq:410} and \eqref{eq:411} remain unaffected, except for replacing the coefficient $1/2$ with $\kappa$. Therefore, we derive the same contradiction as before.
\end{proof}

    \vskip10pt
    
Conghan Dong, Simons Laufer Mathematical Sciences Institute, 17 Gauss Way, Berkeley, CA 94720, United States, Email: conghan.dong@duke.edu.\\
    
Yu Li, Institute of Geometry and Physics, University of Science and Technology of China, No. 96 Jinzhai Road, Hefei, Anhui Province, 230026, China; Hefei National Laboratory, No. 5099 West Wangjiang Road, Hefei, Anhui Province, 230088, China; E-mail: yuli21@ustc.edu.cn. \\

    \end{document}